\newtheorem{thm}{Theorem}[section]
\newtheorem{prop}[thm]{Proposition}
\newtheorem{lem}[thm]{Lemma}
\newtheorem{cor}[thm]{Corollary}
   \theoremstyle{definition}
\newtheorem{dfn}[thm]{Definition}
\newtheorem{exa}[thm]{Example}
\newtheorem{rmk}[thm]{Remark}
\newtheorem{ntn}[thm]{Notation}
\newtheorem{prb}[thm]{Problem}
\newcommand{\posP}{{\mathsf P}}
\def\schluss{\hfill\ensuremath{\diamond}}
\newcommand{\Ext}{\operatorname{Ext}}
\newcommand{\cs}{{\mathrm{c.s.}}}
\newcommand{\Var}{{\mathrm{Var}}}
\newcommand{\dR}{{\mathrm{dR}}}
\newcommand{\Tor}{\operatorname{Tor}}
\newlength{\myl}
\newcommand{\mmu}{{\makebox[1.17\myl]{$\mu\hspace{-0.83\myl}\mu$}}}
\newcommand{\pt}{\mathit{pt}}
\newcommand{\an}{\mathit{an}}
\newcommand{\del}{\partial}
\newcommand{\de}{{\mathrm d}}
\newcommand{\into}{\hookrightarrow}
\newcommand{\onto}{\twoheadrightarrow}
\renewcommand{\to}{\longrightarrow}
\newcommand{\ideal}[1]{{\langle#1\rangle}}
\newcommand{\ilim}{\varprojlim}
\newcommand{\minus}{\smallsetminus}
\newcommand{\tildeV}{{\tilde V}}
\newcommand{\calD}{\mathcal{D}}
\newcommand{\calF}{\mathcal{F}}
\newcommand{\calG}{\mathcal{G}}
\newcommand{\calH}{\mathcal{H}}
\newcommand{\calL}{\mathcal{L}}
\newcommand{\calM}{\mathcal{M}}
\newcommand{\calO}{\mathcal{O}}
\newcommand{\frakm}{{\mathfrak{m}}}
\newcommand{\frakp}{{\mathfrak{p}}}
\newcommand{\frakn}{{\mathfrak{n}}}
\newcommand{\fraky}{{\mathfrak{y}}}
\renewcommand{\AA}{\mathbb{A}}
\newcommand{\CC}{\mathbb{C}}
\newcommand{\DD}{\mathbb{D}}
\newcommand{\HH}{\mathbb{H}}
\newcommand{\KK}{\mathbb{K}}
\newcommand{\NN}{\mathbb{N}}
\newcommand{\PP}{\mathbb{P}}
\newcommand{\QQ}{\mathbb{Q}}
\newcommand{\RR}{\mathbb{R}}
\renewcommand{\SS}{\mathbb{S}}
\newcommand{\ZZ}{\mathbb{Z}}
\newcommand{\hatOmega}{{\hat \Omega}}
\DeclareMathOperator{\charac}{\textup{char}}
\DeclareMathOperator{\Cl}{\textup{Cl}}
\DeclareMathOperator{\coker}{\textup{coker}}
\DeclareMathOperator{\codim}{\textup{codim}}
\DeclareMathOperator{\DR}{\mathit{DR}}
\DeclareMathOperator{\gr}{\textup{gr}}
\DeclareMathOperator{\Hom}{\textup{Hom}}
\DeclareMathOperator{\id}{\textup{id}}
\DeclareMathOperator{\injdim}{\textup{injdim}}
\DeclareMathOperator{\depth}{\textup{depth}}
\DeclareMathOperator{\Pic}{\textup{Pic}}
\DeclareMathOperator{\Proj}{\textup{Proj}\,}
\DeclareMathOperator{\Supp}{\textup{Supp}}
\DeclareMathOperator{\Spec}{\textup{Spec}\,}
\numberwithin{equation}{subsection}
\begin{document}
\title{On Lyubeznik type invariants}

\author{Thomas Reichelt}
\address{
Thomas~Reichelt\\
Mathematisches Institut \\
Universit\"at Heidelberg\\
Im Neuenheimer Feld 205\\
69120 Heidelberg\\
Germany}
\email{treichelt@mathi.uni-heidelberg.de}

\author{Uli Walther}
\address{ Uli~Walther\\
  Purdue University\\
  Dept.\ of Mathematics\\
  150 N.\ University St.\\
  West Lafayette, IN 47907\\ USA}
\email{walther@math.purdue.edu}

\author{Wenliang Zhang}
\address{ Wenliang~Zhang\\
  Department of Mathematics, Statistics, and Computer Science,
  University of Illinois at Chicago,
  Chicago, IL 60607}
\email{wlzhang@uic.edu}

\thanks{TR was supported by  DFG Emmy-Noether-Fellowship RE
  3567/1-2. UW was supported by the NSF and by Simons Foundation
Collaboration Grant for Mathematicians \#580839. WZ was supported by the NSF grant DMS\#1752081.}

\begin{abstract}
  We discuss for an affine variety $Y$ embedded in affine space $X$
  two sets of integers attached to $Y\subseteq X$ via local and de
  Rham cohomology spectral sequences. We give topological
  interpretations, study them in small dimension, and investigate to
  what extent one can attach them to projective varieties.
\end{abstract}
\maketitle

\setcounter{tocdepth}{3}
\tableofcontents

\section{Introduction}

\begin{ntn}
Throughout we will use the following conventions: $\KK$ will be a
field of characteristic zero,
\[
I\subseteq R_n=\KK[x_1,\ldots,x_n], \qquad X=\Spec(R_n)
\]
an ideal in the polynomial ring in $n$ indeterminates and the associated
affine space. Our default affine variety will
be
\[
Y:=\Var(I)\subseteq X, \text{ with complement } U=X\minus Y,
\]
and if $I$ is
homogeneous then
\[
\tilde Y:=\Proj(R_n/I)\subseteq \PP^{n-1}_\KK
\]
will be the projective scheme to $I$, with complement $\tilde
U:=\PP U =\PP^{n-1}_\KK\minus \tilde Y$. The homogeneous irrelevant ideal of
$R_n$ will
be denoted $\frakm=\ideal{x_1,\ldots,x_n}$ and $d$ will stand for
$\dim(Y)$.\schluss
\end{ntn}

Hartshorne's seminal work  \cite{Hartshorne-DRCAV} begins with
\begin{verse}
  The idea of using differential forms and their integrals to define
  numerical invariants of algebraic varieties goes back to Picard and
  Lefschetz\ldots
\end{verse}
and then outlines the development of this branch of mathematics until
the writing of his article on algebraic de Rham cohomology.
While originally the base field was the complex numbers $\CC$,
Hartshorne works in greater generality over fields $\KK$ of characteristic
zero. It has become clear since, particularly through the work of
Lyubeznik \cite{L-Dmods}, that Kashiwara's framework of $D$-modules is the
right set-up for these investigations. This article is a contribution
to this general theme, with the two main characters defined as follows.


\bigskip

If a variety $Y'$ can be embedded into a smooth $\KK$-variety $X'$ of
$\KK$-dimension $n$, one can
define the de Rham homology and cohomology functors of $Y'$ as 
\[
H_q^\dR(Y'):=\HH^{2n-q}_{Y'}(X',\Omega^\bullet_{X'}), \qquad\qquad
H^q_\dR(Y'):=\HH^q(X',\hatOmega^\bullet_{X'}).
\]
Here, $\HH(-)$ denotes hypercohomology functor on complexes of sheaves,
$\Omega^\bullet_{X'}$ is  the de Rham complex (relative to $\KK$) of $X'$,
and the hat denotes completion along $Y'$. Hartshorne proves that these
quantities do not depend on $X'$ or on the chosen embedding of $Y'$, and
demonstrates many interesting facts about these two functors.

We focus on de Rham homology for a moment, under the assumption that
$X'$ is affine. Then hypercohomology collapses to global sections since
the modules in $\Omega^\bullet_{X'}$ are coherent, equal to exterior
powers of the free $\calO_{X'}$-module $\Omega^1_{X'}$ of rank $n$ given by
the K\"ahler differentials on $X'$.  The set-theoretic
sections-with-support functor on a coherent sheaf agrees with
algebraic local cohomology.  In particular,
$\HH^{2n-q}_{Y'}(X',\Omega^i_{X'})$ is just local cohomology
$H^{2n-q}_{Y'}(\Omega^i_{X'})$ of the module of $i$-forms (identifying
sheaves with their global sections).

The sheaf $\omega_X:=\Omega^n_X$ has a natural right module structure
over the ring $\calD_X$ of $\KK$-linear differential operators on $X$.
The global sections of the sheaf of differential operators $\calD_X$
on $X$
are the elements of the Weyl algebra
\[
D_n=R_n\ideal{\del_1,\ldots,\del_n}
\]
where $\del_i$ stands for the partial differentiation operator
$\frac{\del}{\del x_i}$.  On the other hand, $\Omega^i_X$ is the free
$\calO_X$-module of rank ${n\choose i}$ generated by the symbols $\de
x_I=\de x_{j_1}\wedge\cdots\wedge \de x_{j_i}$ with $I\subseteq
2^{[n]}$ and $|I|=i$, and the global sections of $\omega_X$ are the
elements of the right $D_n$-module $D_n/\del\cdot
D_n:=D_n/(\del_1,\ldots,\del_n)D_n$.

Let us write $\Omega^\bullet_{\calD,X}$ for the Koszul co-complex on
$\calD_X$ generated by left multiplication by the derivations
$\del_1,\ldots,\del_n$. This is a free resolution of right
$\calD_X$-modules for $\omega_X$ shifted right $n$ steps, and yields an
explicit form of the de Rham cohomology functors
\[
H^i_\dR(-):=\HH^{i-n}(X,\omega_X\otimes_{\calD_X}^L(-))=
H^{i}(\Omega^\bullet_{\calD,X}\otimes_{\calD_X}(-)) 
\]
from the category of left $\calD_X$-modules to the category of
$\KK$-vector spaces. Since the constituents of
$\Omega^\bullet_{\calD,X}$ are $\calD_X$-free and $X$ is
$\calD$-affine, for each left $\calD_X$-module $\calM$ with global
sections $M$ one has
\[
H^i_\dR(\calM)=H^{i-n}((D_n/\del\cdot
D_n)\otimes^L_{D_n}M).
\]
If $\calM$ is holonomic, these vector spaces
are $\KK$-finite since they are the cohomology of the $\calD$-module
theoretic direct image functor under the map to a point \cite{HTT}.

We return to de Rham homology $\HH^{2n-q}_Y(X,\Omega^\bullet_X)$ with
$X$ equal to affine $n$-space.  Since $\Omega^j_X$ is finite free over
$\calO_X$, there is a natural identification
of $H^i_Y(\Omega^j_X)$ with
$\Omega^j_X\otimes_{\calO_X}H^i_Y(\calO_X)$.  The complex
\[
\ldots\to \Omega^{j-1}_X\otimes_{\calO_X}H^i_Y(\calO_X)\to 
\Omega^{j}_X\otimes_{\calO_X}H^i_Y(\calO_X)\to 
\Omega^{j+1}_X\otimes_{\calO_X}H^i_Y(\calO_X)\to \ldots
\]
with differential induced by the usual exterior derivative 
is quasi-isomorphic to the complex $\Omega^\bullet_{\calD,X}\otimes_{\calD_X}
H^i_Y(\calO_X)$. 

Since $X$ is affine, $\Gamma(X,-)$ induces a spectral sequence for
hypercohomology,
\begin{gather}\label{eqn-dR-ss}
H^p_\dR(H^q_Y(R_n))\Longrightarrow
\HH^{p+q}_Y(\Omega^\bullet_X)=H^\dR_{2n-p-q}(Y)
\end{gather}
that has been considered in \cite[Lemma 2.16]{Switala-dR-complete} in
the complete local case, and in \cite{Bridgland} in the context we are
working in.  We note that over the complex numbers, the abutment is
naturally equal to the reduced singular cohomology of $U:=X\backslash
Y$, so there is a spectral sequence
\begin{gather}\label{eqn-Cech-deRham}
E^2_{p,q}=H^p_\dR(H^q_Y(R_n))\Longrightarrow \tilde H^{p+q-1}(U;\CC)
\end{gather}
to the reduced cohomology of $U$.
For $I=\frakm$, the abutment is
$H^n_\dR(H^n_I(R_n))=\CC[2n]$, the reduced cohomology of the
$(2n-1)$-sphere shifted by one.
For details see for example \cite[p.~67]{Hartshorne-DRCAV} and
\cite[Thm.~3.1]{LSW}.

The articles \cite{Switala-dR-complete,Bridgland}
proceed to show that the $E_r$-pages,
$r\geq 2$, of these spectral sequences are isomorphic for all
embeddings of $Y$. 
In consequence, 
the terms on pages $r\geq2$ of \eqref{eqn-dR-ss}
are numerical invariants of $Y$.

\begin{dfn}
  Let $Y=\Var(I)$ be an affine variety embedded in $X=\Spec(R_n)$
  defined by the ideal $I\subseteq R_n=\KK[x_1,\ldots,x_n]$ over the
  field $\KK$ of characteristic zero. For $r\geq 2$, the
  \emph{$(r,p,q)$-\v Cech--de Rham number} of $Y$ is the dimension
  \[
  \rho_{p,q}^r(Y):=\dim_\KK (E^{n-p,n-q}_r)
  \]
  of the corresponding entry in the spectral sequence
  \eqref{eqn-dR-ss}. If $r=2$ we denote
  $\rho_{p,q}^r(Y)=H^{n-p}_\dR(H^{n-q}_I(R_n))$ by just
  $\rho_{p,q}(Y)$.
  \schluss  
\end{dfn}
Switala defined these for ideals in the power series ring
\cite[Dfn.~2.23]{Switala-dR-complete}; they are well-defined by
\cite[Prop.~2.17]{Switala-dR-complete} and \cite[Thm.~1.1]{Bridgland}.
The dimensions $\rho_{p,q}^r$ are invariant under field extensions,
and one can compute them algorithmically over any field of definition
for $I$, see \cite{OT1,OT2,Walther-cdrc}.


\bigskip

A related construction appeared  in \cite{L-Dmods}, where 
Lyubeznik shows that 
the socle dimensions of the $E_2$-terms of the Grothendieck spectral
sequence
\begin{gather}\label{eqn-lc-ss}
  E_2^{p,q}=  H^p_\frakm(H^q_I(R_n))\Longrightarrow H^{p+q}_\frakm(R_n)
\end{gather}
are independent of the closed embedding of $Y=\Spec(R_n/I)$ into any
affine space $\AA^n_\KK=\Spec(R_n)$ and uses it to define numerical
invariants
\[
\lambda_{p,q}(R_n/I):=\dim_\CC \Hom(R/\frakm,H^p_\frakm(H^{n-q}_I(R))).
\]
These numbers, known as \emph{Lyubeznik numbers}
have been investigated for nearly three decades and are indeed functions
of the ring $R/I$ (and do not depend on the presentation of $R/I$ as a
quotient of a polynomial ring). For detailed information on the history
and the \emph{status quo} we refer to the survey articles
\cite{BWZ-survey,WaltherZhang-survey}.

In this article we develop further the theory of the Lyubeznik numbers on
one side, and on the other describe a number of properties that the invariants
introduced by Switala and Bridgland enjoy.

More precisely, in the next section we study the \v Cech--de Rham
numbers for small dimension of $Y$, and investigate the collapse of
the corresponding spectral sequence. We identify some classes of
examples where this collapse happens on the $E_2$-page, and explain
why this is so for subspace arrangements, by stringing together
known results of Goresky--MacPherson, and \`Alvarez--Garc\'ia--Zarzuela. We
also explore the behavior of the \v Cech--de Rham numbers under
Veronese maps.

In the third section we discuss Lyubeznik numbers. We elaborate on the
results from \cite{RSW} by establishing some classes of projective
varieties $\tilde Y$ with Picard number one that have most Lyubeznik
numbers of the affine cone $Y$ independent of the embedding. That
includes determinantal varieties, certain toric varieties, and
horospherical varieties.  We also prove for certain projective
varieties of dimension four or less that their Lyubeznik numbers are
independent of the embedding.

\quad{\bf Some known facts.}
  
Since we will have to refer to them a few times, we state here some
results from the literature.

\begin{rmk}\label{rmk-L-argument}
  \begin{asparaenum}
    \item If $\KK$ is of characteristic zero, then local cohomology,
      algebraic de Rham cohomology, injective dimension, dimension,
      socle dimension all behave well under field extensions. Since
      all varieties are defined by a finite number of data, one can
      restrict all questions we discuss from the given field $\KK$ to
      a field of definition for $I$, and then extend to $\CC$. In
      particular, we can assume that $\KK=\CC$ whenever it is convenient.
  \item (\cite[(4.4.iii)]{L-Dmods})\quad Suppose $Y\subseteq
    X=\AA^n_\KK$ is an affine variety.  Then the local cohomology
    module $H^i_I(R_n)$ has support dimension at most $n-i$, and it
    vanishes if $i<c:=\codim(Y,X)$.  If $Y$ is equi-dimensional and
    $i>c$, then $H^i_I(R_n)$ has support dimension less than $n-i$.
%

  \item (\cite[Thm.~2.4]{L-Dmods})\quad If $M$ is a holonomic
    $D$-module, then $H^i_\frakm(M)$ is a finite sum of copies of the
    (Artinian, indecomposable) injective hull $H^n_\frakm(R_n)$ of
    $\KK=R_n/\frakm$. More generally, one has for all holonomic
    modules that 
    \[
    \injdim_R(M)\le \dim\Supp(M).
    \]
    Thus, all right derived functors of $R$-modules with derivation
    level greater than $n-i$ 
    vanish on $H^i_I(R)$,
    and those of derivation level
    $n-i$ vanish  if $I$ is equi-dimensional and $i>c$.
  \item We will also have to refer to equivariant $\calD_X$-modules. 
    For details on
    equivariance of $\calD$-modules, see for example
    \cite{LoerinczWalther}.
  \item Let $I\subseteq R=\KK[x_1,
  \dots,x_n]$ be a homogeneous ideal such that $\dim(R/I)\geq 2$. Assume that $\KK$ is separably closed. Hartshorne proved in \cite[Theorem 7.5]{Hartshorne-CDAV} that if $\Proj(R/I)$ is connected then $H^n_I(R)=H^{n-1}_I(R)=0$, and named this result the Second Vanishing Theorem. This theorem subsequently has been extended to the local settings as follows:
    Let $R$ be either a complete regular local ring of dimension $n$ that
    contains a separably closed coefficient field or an unramified complete regular local ring of dimension $n$ in mixed characteristic with a separably closed residue field. Let $I\subseteq
    R$ be an ideal. Then $H^n_I(R)=H^{n-1}_I(R)=0$ if and only if
    $\dim(R/I)\geq 2$ and the punctured spectrum of $R/I$ is connected,
    \cite{Ogus-LCDAV,PeskineSzpiro,HunekeLyubeznik, ZhangVanishingLCMixedChar}.
    \schluss
  \end{asparaenum}
\end{rmk}
  
The following is a special case of a more general result comparing
direct image to a point and restriction to a point.
\begin{lem}[{\cite[Lemma 3.3]{RW-weight}}]\label{lem-rw}
  Suppose $\KK=\CC$ and $X=\CC^n$. Assume that 
  $\calM$ is a regular holonomic $\calD_X$-module and its global sections form a
  standard graded $R_n$-module. Suppose further that $\calM$ is
  (strongly) equivariant as a $\calD$-module with respect to the
  $\CC^*$-action corresponding to this grading. Then its de Rham
  cohomology groups agree with the restriction groups to the origin of
  the holonomically dual module. In particular, the dimensions of
  these groups satisfy
  \[
  \dim_\CC (H^i_\dR(\calM))= \dim_\CC(\Hom_{R_n}(R_n/\frakm,
  H^{n-i}_\frakm(\DD(\calM))),
  \]
  where $\DD$ is the holonomic duality functor.
  \qed
\end{lem}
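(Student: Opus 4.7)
The strategy is to interpret $H^i_\dR(\calM)$ via the $\calD$-module direct image to a point, exploit the contracting $\CC^*$-action to reduce this direct image to the fiber at the origin, and then apply holonomic duality together with Lyubeznik's structure theorem for local cohomology. Let $a\colon X\to\pt$ denote the structure map and $i\colon\{0\}\hookrightarrow X$ the inclusion of the origin, which is the unique fixed point of the given $\CC^*$-action. Under the conventions recalled in the excerpt one has $H^i_\dR(\calM)\cong\mathcal{H}^{i-n}(a_+\calM)$, where $a_+(-)=\omega_X\otimes^L_{\calD_X}(-)$.

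The geometric core of the argument is the identification
\[
a_+\calM \;\simeq\; i^{*}\calM\,[n]
\]
in the derived category of $\CC$-vector spaces, valid for any strongly $\CC^*$-equivariant regular holonomic $\calD_X$-module whose global sections form a \emph{standard} graded $R_n$-module. The intuition is that a contracting action deforms $X$ onto the fixed locus, so equivariant global invariants are governed by the fiber at $0$. I would prove it via the Euler operator $\theta=\sum x_j\del_j$: strong equivariance makes $\theta$ act diagonalizably on $\calM$ with integer weights, and standard gradedness forces those weights to lie in $\NN$. The de~Rham Koszul complex $\Omega^\bullet_X\otimes_{R_n}\calM$ splits according to this weight decomposition, and the relation $[\theta,\del_j]=-\del_j$ makes each positive-weight summand contractible; what survives is exactly the weight-zero part, which is canonically $i^*\calM$. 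A more conceptual alternative routes this through the Fourier transform, which on the subcategory of monodromic regular holonomic modules coincides with $\DD$ up to twist and interchanges $a_+$ with $i^*$.

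Given the contracting-action identification, the rest is formal. Holonomic duality provides $\DD\circ i^* \simeq i^!\circ\DD$, and $\DD$ on a point reduces to ordinary $\CC$-linear duality, so passing to dimensions yields
\[
\dim_\CC \mathcal{H}^{i-n}(i^*\calM[n]) \;=\; \dim_\CC \mathcal{H}^{n-i}(i^!\DD\calM).
\]
By Remark~\ref{rmk-L-argument}(3), each module $H^j_\frakm(\DD\calM)$ is a finite direct sum of copies of the injective hull $H^n_\frakm(R_n)$, whose socle is one-dimensional; hence the cohomology of $i^!\DD\calM$ in the relevant degree records precisely the socle dimension of $H^{n-i}_\frakm(\DD\calM)$, namely $\dim_\CC\Hom_{R_n}(R_n/\frakm,H^{n-i}_\frakm(\DD\calM))$.

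The main obstacle is the contracting-action identification $a_+\calM\simeq i^*\calM[n]$; the remainder amounts to shift bookkeeping together with the structure theorem for local cohomology of holonomic modules. The "standard graded" hypothesis enters exactly here: it ensures nonnegativity of the $\theta$-weights, so that the positive-weight summand is genuinely contractible and does not contribute.
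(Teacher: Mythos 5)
Your overall strategy---interpret $H^i_\dR(\calM)$ as the direct image $a_+\calM$ to a point, use the contracting $\CC^*$-action to reduce it to the fiber at the fixed point, then dualize and invoke Lyubeznik's structure theorem---is exactly the spirit of the cited result (the paper itself gives no proof of Lemma \ref{lem-rw}, it quotes \cite[Lemma 3.3]{RW-weight}, which compares direct image to a point with restriction to a point). However, your central identification $a_+\calM\simeq i^{*}\calM[n]$ is false, and not merely up to shifts. For the attracting action the contraction principle on monodromic constructible complexes reads $a_*F\simeq i^{-1}F$ (stalk at the fixed point); under the Riemann--Hilbert dictionary of Remark \ref{rmk-RH-stuff} the functor $i^{-1}$ corresponds to $i^\dagger=\DD\circ i^{+}\circ\DD$, \emph{not} to the naive derived restriction $i^{+}$. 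The naive restriction instead computes the proper direct image, $a_{!}\calM\simeq i^{+}\calM$, i.e.\ compactly supported de Rham cohomology. This is precisely why the holonomic dual appears in the statement of the lemma; with your identification the dual would never enter, and your subsequent ``formal'' duality step reaches the stated formula only through a compensating bookkeeping slip. Concretely, take $\calM=E:=H^n_\frakm(R_n)$: your chain gives $H^i_\dR(E)=\calH^{i-n}(i^{*}E[n])=\calH^{i}(\CC\otimes^L_{R_n}E)$, which is nonzero only for $i=-n$, whereas in fact $H^i_\dR(E)=\CC$ exactly for $i=n$ (consistent with the lemma, since $\DD E=E$ has one-dimensional socle in $H^0_\frakm$).

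The supporting argument also fails on its own terms. ``Standard graded'' only means graded compatibly with the standard grading of $R_n$; it does not force the $\theta$-weights into $\NN$---the lemma is applied to Eulerian modules such as $H^{n-q}_I(R_n)$ and $E$, whose gradings live in negative degrees. The Cartan-homotopy argument does show that the nonzero-weight part of the de Rham complex of a strongly equivariant module is acyclic (this is in effect Proposition \ref{prop-integrate-Eulerian}), but the weight-zero part of the de Rham ($\partial$-Koszul) complex is built from finitely many graded pieces of $\calM$ and is \emph{not} the $x$-Koszul complex $\CC\otimes^L_{R_n}\calM$; identifying them is unjustified and false, by the same example. The Fourier-transform aside is also incorrect: on monodromic modules the Fourier transform does not coincide with $\DD$ up to twist (it sends $R_1=\CC[x]$ to the delta module at the origin, while $\DD R_1=R_1$). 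To repair the proof, run the contraction step as $a_*F\simeq i^{-1}F$ on the constructible side, translate via $i^\dagger=\DD\, i^{+}\,\DD$ so that the naive restriction is applied to $\DD\calM$, and only then use Remark \ref{rmk-L-argument}(3) to read off the socle dimensions of $H^{n-i}_\frakm(\DD\calM)$; that last step of yours is fine.
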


\section{\v Cech--de Rham numbers}

\subsection{Basic structure results}

Basic properties of the de Rham functor imply that
$\rho_{p,q}^r$ is zero for $p$ outside the interval $[0,n]$. On the
other hand, local cohomology $H^j_I(R)$ is nonzero only when
$\codim(I)\le j\le n$, and so $\rho_{p,q}^r$ is zero
for $q$ outside the interval $[0,\dim(Y)]$. Our first statement on these
numbers is that they are confined to a triangular region:

\begin{prop}\label{prop-rho-diag}
  The \v Cech--de Rham numbers satisfy for all $r\geq 2$ that
  \[
  \rho_{p,q}^r(Y) = 0 \qquad \text{ if } p>q.
  \]
\end{prop}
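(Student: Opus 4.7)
The plan is to establish the vanishing already on the $E_2$-page of \eqref{eqn-dR-ss} and then propagate it to all later pages. Differentials act by $d_r\colon E_r^{a,b}\to E_r^{a+r,b-r+1}$, so every $E_{r+1}^{a,b}$ is a subquotient of $E_r^{a,b}$, and vanishing on $E_2$ passes to all $E_r$ with $r\geq 2$. Setting $a=n-p$ and $b=n-q$ in the definition $\rho^r_{p,q}(Y)=\dim E_r^{n-p,n-q}$ translates the claim ``$\rho^r_{p,q}(Y)=0$ for $p>q$'' into ``$E_r^{a,b}=0$ for $a<b$''. The goal thus reduces to
\[
H^a_\dR\bigl(H^b_I(R_n)\bigr)=0\qquad\text{whenever}\ a<b.
\]

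I would combine two ingredients. First, by Remark~\ref{rmk-L-argument}.2, the holonomic module $\calM:=H^b_I(R_n)$ has $\dim\Supp(\calM)\leq n-b$. Second, the general vanishing principle that for any holonomic $\calD_X$-module $\calM$ on $X=\AA^n_\KK$ with $\dim\Supp(\calM)\leq d$,
\[
H^a_\dR(\calM)=0\qquad\text{for}\ a<n-d.
\]
Substituting $d=n-b$ gives $H^a_\dR(H^b_I(R_n))=0$ for $a<n-(n-b)=b$, as required.

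The heart of the argument is thus the vanishing principle. By Remark~\ref{rmk-L-argument}.1 I may base-change to $\KK=\CC$; then the Riemann--Hilbert correspondence identifies $\omega_X\otimes^L_{\calD_X}\calM$ (up to a conventional shift) with the perverse sheaf $\DR(\calM)$, which is supported on $\Supp(\calM)$. A perverse sheaf supported on a closed set of dimension $d$ has ordinary cohomology sheaves concentrated in degrees $[-d,0]$, and Artin vanishing on the affine $X$ then yields $\HH^j(X,\omega_X\otimes^L_{\calD_X}\calM)=0$ for $|j|>d$. Reinserting the shift in $H^a_\dR(\calM)=\HH^{a-n}(X,\omega_X\otimes^L_{\calD_X}\calM)$ gives $H^a_\dR(\calM)=0$ for $a<n-d$ (and also for $a>n+d$). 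The main obstacle is purely the bookkeeping of the shift conventions when appealing to Riemann--Hilbert; alternatively the bound can simply be read off from a standard $\calD$-module reference such as \cite{HTT}.
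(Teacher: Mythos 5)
Your proposal is correct and follows essentially the same route as the paper: reduce to the $E_2$-page, invoke Lyubeznik's bound $\dim\Supp H^{n-q}_I(R_n)\le q$, pass through the Riemann--Hilbert correspondence to view the $E_2$-term as hypercohomology of a perverse sheaf supported in dimension at most $q$, and conclude from the standard degree bounds for such sheaves (the paper cites \cite[Cor.~5.2.18, Prop.~5.2.20]{Dimca-sheaves} after first restricting to the support $Y_q$ via $i_q^!$, a step you bypass by quoting the bound directly for perverse sheaves supported on a low-dimensional closed set, which is harmless). Your stated window $|j|>d$ is not sharp under the paper's normalization of $\omega_X\otimes^L_{\calD_X}(-)$ (the true window is $[-d,0]$), but only the lower edge $j\ge -d$ is used, so the argument stands.
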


Before entering the proof we  set up some notation and collect several facts
and from \cite{Dimca-sheaves,HTT,KS} on constructible
sheaves and the Riemann--Hilbert correspondence. All spaces mentioned
in the sequel are assumed to be algebraic. 

\begin{rmk}\label{rmk-RH-stuff}
  Let $X$ be a smooth algebraic variety.
  \begin{asparaenum}

\item For any algebraic map $f$ between algebraic sets we denote, on
  the level of constructible sheaves, the usual direct and inverse
  image functors by $f_*$ and $f^{-1}$, and the proper direct and
  exceptional inverse image functors by $f_!$ and $f^!$
  respectively. For the sake of notational brevity, we mean by these
  symbols always the derived functors on the appropriate derived
  categories (so that,
  for example, we write $j_*$ instead of $Rj_*$ as a functor on the
  bounded derived category of constructible sheaves). This abuse of
  notation is common in the relevant literature.

\item On the level of $\calD$-modules, we will use $f_+$ and $f_!$
  for the usual and proper direct image functors, and $f^+$ and
  $f^\dagger$ for the usual and exceptional inverse image
  functors. For reference and comparison,
  our $\calD$-functors $f_+,f_!,f^+,f^\dagger$ are (in
  this sequence) denoted by $\int_f,\int_{f!},f^\dagger,f^\star$ in \cite{HTT}.

  \item The Riemann--Hilbert correspondence sets up an equivalence
  between the derived category of bounded complexes of
  $\calD_X$-modules with holonomic cohomology, and the derived
  category of bounded complexes
  of constructible sheaves $D^b_\cs(X)$. The correspondence is induced
  by the de Rham functor
  $\Omega^\bullet_{X^\an}\otimes_{\calD_{X^{\an}}}^L(-)$ computed on the
  analytic space attached to $X$.

  Under this correspondence,
  taking cohomology of a complex of $\calD_X$-modules corresponds to an
  operation on complexes of constructible sheaves that is denoted
  ${}^p\calH$; it is not the same as taking cohomology of complexes of
  constructible sheaves. We call \emph{perverse exact} any functor on
  the derived category that commutes with ${}^p\calH$.

  \item Suppose $f\colon X\to X'$ is a morphism of smooth algebraic
    varieties. Under the Riemann--Hilbert correspondence, the functors
    for $\calD$-modules correspond to those on constructible sheaves
    as follows:
    \[
    \DR_{X'}\circ f_+\simeq f_*\circ \DR_X;\qquad
    \DR_{X'}\circ f_!\simeq f_!\circ \DR_X;\qquad
    \DR_X\circ f^+\simeq f^!\circ \DR_{X'};\qquad 
    \DR_X\circ f^\dagger\simeq f^{-1}\circ \DR_{X'}. \]
    (The last two identifications are not misprints; for inverse
    images, the Riemann--Hilbert correspondence via the de Rham
    functor aligns 
    a regular inverse image with an exceptional one).
\item 
  Consider an open embedding $j\colon U\into X$ and a closed embedding
  $i\colon Z\into X$ where $Z$ is closed (and, \emph{a fortiori}, constructible)
  and where $U$ is the complement of $Z$
  in $X$. We have the following properties of induced
  functors for complexes of constructible sheaves:
\begin{itemize}
\item $i_!$ is exact and perverse exact;
\item $i^{-1}$ is exact but usually not perverse exact;
\item $i^!$ and $j_*$ are usually neither exact nor perverse exact;
\item $j^{-1}$ is exact and perverse exact;
\item $j_!$ is exact but usually not perverse exact.
\end{itemize}

\item In the situation of the previous item, we have the following
  distinguished triangles, Verdier dual to one another, in
  $D^b_\cs(X)$:
\begin{align}
 i_! i^! F^\bullet \to &F^\bullet \to j_* j^{-1} \overset{+1}{\to}\, , \notag \\
 j_! j^{-1} F^\bullet \to &F^\bullet \to i_! i^{-1} F^\bullet \overset{+1}{\to}\, . \notag
\end{align}

\item We will always denote by $a_S$ the map from a space $S$ to a
  point, which we occasionally denote with $\pt$ and occasionally
  identify with the vertex of a cone if a cone is present.\schluss
  \end{asparaenum}
\end{rmk}

We now enter the 
\begin{proof}[Proof of Proposition \ref{prop-rho-diag}]
It suffices to consider $r=2$.  We will use the Riemann--Hilbert
correspondence to translate $\rho_{p,q} = \dim_\CC(
H^{n-p}_{\dR}(H^{n-q}_I(R_n)))$ into the language of constructible
sheaves. The de Rham functor takes the local cohomology $H^{n-q}_I(\calO_X)$
to ${^p}\calH^{n-q} h_!h^!\CC_X[n] \simeq h_!({^p}\calH^{-q} \omega_Y)$
where $h:Y \rightarrow X=\AA^n_\CC$ is the canonical embedding,
$\CC_X[n]$ is the constant sheaf on $X$ shifted to the left by $n$ and
$\omega_Y = \DD \CC_Y$ is the (topological) dualizing complex
$R\Hom_{\cs}(\CC_Y,\CC_Y)$ for constructible sheaves on $Y$. (We use
$\DD$ also to denote Verdier duality, the operation corresponding to
holonomic duality under the Riemann--Hilbert correspondence).

By Remark \ref{rmk-L-argument}, 
$\dim(\Supp(h_!({^p}\calH^{-q} \omega_Y))) =
\dim(\Supp(H^{n-q}_I(R)))\le q$. Set $Y_q := \Supp({^p}\calH^{-q}
\omega_Y)$ and denote by
\[
i_q\colon Y_q \rightarrow Y \qquad\text{and}\qquad j_q\colon Y
\minus Y_q \rightarrow Y\] the embeddings of $Y_q$ and its
complement into $Y$ and denote by $\tilde{i}_q:Y_q \to X$ resp. $\tilde{j}_q: X \minus Y_q \to X$ the corresponding embeddings of $Y_q$ and its complement into $X$.

On the level of $\calD_X$-modules with support in $Y$, this gives an exact
triangle
\[
R\Gamma_{Y_q} \to \id \to (\tilde{j}_q)_+ (\tilde{j}_q)^\dagger\stackrel{+1}{\to}
\]
that corresponds via Riemann--Hilbert to
\[
(\tilde{i}_q)_! (\tilde{i}_q)^! \to \id\to (\tilde{j}_q)_* (\tilde{j}_q)^{-1}\stackrel{+1}{\to}
\]
for constructible sheaves.  

Since the support of ${^p}\calH^{-q} \omega_Y$ is $Y_q$, $(j_q)_*
j_q^{-1} ({^p}\calH^{-q} \omega_Y)=0$ and so $(i_q)_! (i_q)^!
({^p}\calH^{-q} \omega_Y) \simeq {^p}\calH^{-q} \omega_Y$.  In
particular $\calG_q := (i_q)^! ({^p}\calH^{-q} \omega_Y)$ is a
perverse sheaf on $Y_q$ satisfying $(i_q)_! \calG_q ={^p}\calH^{-q}
\omega_Y$.  As $\tilde{i}_q$ is proper, we find that
$(\tilde{i}_q)_*(\calG_q)=(\tilde{i}_q)_!(\calG_q)={^p}\calH^{-q}
\omega_Y$ and it follows that the hypercohomology of $\calG_q$ agrees
with the hypercohomology of ${^p}\calH^{-q}\omega_Y$.

But the hypercohomology
$\HH^k(\calF)$ vanishes for $k \not \in [-q,0]$ and every perverse sheaf
$\calF$ on $Y_q$ (see e.g. \cite[Corollary 5.2.18]{Dimca-sheaves} and
\cite[Proposition 5.2.20]{Dimca-sheaves}. The claim follows now from
\[
\rho_{p,q}^2 = \dim( H^{n-p}_{\dR}(H^{n-q}_I(R_n))) = \dim ( \HH^{-p}({^p}\calH^{-q}\omega_Y)) = \dim(\HH^{-p}(\calG_q)).
\]
\end{proof}

\begin{rmk}
  If $I$ is homogeneous, a more elementary argument can be
  made. Indeed, the local
  cohomology module $H^p_I(R_n)$ is Eulerian (see Definition \ref{dfn-Eulerian}
  below). This means that the Lie algebra action induced from the
  differentiating the $\CC^*$-action corresponding to the grading  
  agrees with the action of the Lie algebra via the morphism from the
  universal enveloping algebra to the Weyl algebra. In other words,
  $H^p_I(R_n)$ is equivariant and \cite[Lem.~3.3]{RW-weight}
  applies in the form of Lemma \ref{lem-rw}.

  Holonomic duality does not affect the support of the underlying
  module and preserves the category of regular holonomic $\mathcal{D}$-modules.  Thus, 
  for graded $I$ we have that $H^{n-p}_\dR(H^{n-q}_I(R_n))=0$ if and only if
  $H^p_\frakm(\DD(H^{n-q}_I(R_n))=0$.  This latter vanishing holds
  whenever $\dim(\Supp(H^{n-q}_I(R_n)))< p$ since $H^p_\frakm(\calM)$
  is zero whenever $p>\dim(\Supp(\calM))$, \cite{Grothendieck}.  But
  by Remark \ref{rmk-L-argument}, $\dim(\Supp(H^{n-q}_I(R_n)))\le q$.\schluss
\end{rmk}

If one pictures the $\rho^r_{p,q}$ as a table, it thus takes the
following general form, assuming that $Y$ is embedded into
$\AA^n_\KK$, cut out by the ideal $I\subseteq R$ of dimension $d$:
\[
P^r(Y)=((\rho^r_{p,q})):=
\begin{pmatrix}\rho^r_{0,0}&\cdots&\cdots&\rho_{0,d}^r\\
  0&\ddots&&\vdots\\
  \vdots&\ddots&\ddots&\vdots\\
  0&\cdots&0&\rho^r_{d,d}\end{pmatrix}
\]
Here, $p$ is the row index counting downward, $q$ the column index
counting towards the right, 
and the arrows of the \v Cech--de Rham spectral sequence 
point North to Northeast.

\subsection{Degeneration}

Switala raised in \cite[Question 8.2]{Switala-dR-complete} the
following question for a complete local ring $A$ with coefficient
field $\KK$ of characteristic zero:
\begin{verse}
  ``Does the local Hodge--de Rham homology spectral sequence
    degenerate at $E_2$ ?''
\end{verse}
One can ask a similar question for the affine scenario. We discuss
interesting classes where this question has a positive answer.

\begin{exa}
  Suppose $Y=\Var(I)$ is a complex subspace arrangement. Let $\posP_Y$ be its
  \emph{intersection lattice}, the collection of all possible
  intersections of the components of $Y$, ordered by inclusion. (This
  differs from standard notation in arrangement theory, where the
  order is the reverse).
  We agree that $\posP_Y$ has a unique maximal element corresponding to the
  ambient space, but it may have several minimal elements as we do not
  insist that $I$ be homogeneous (so, the arrangement may not be central).

  It is well-known that the cohomology of the complement $\CC^n\minus
  Y$ is determined by the combinatorics of $\posP$: building on work
  of Brieskorn, Orlik and Solomon \cite{OrlikSolomon-Invent80}
  showed that the cohomology algebra of this complement is given by a
  purely combinatorial algebra constructed from the matroid of the
  arrangement.

  Goresky and MacPherson \cite[III, Thm.~1.3]{GoreskyMcPherson-book}
  proved that the Betti numbers of the complement can be computed as a
  sum of non-negative integers, one for each element of
  $\posP_Y$. Here, the integers for each flat $p\in\posP$ are computed
  as Betti numbers of the simplicial complex $K(>p)$. (While Goresky
  and MacPherson phrase this in terms of relative homology for the pair
  $(K(\geq p),K(>p))$, the space $K(\geq p)$ is contractible and one
  can convert into an absolute homology without harm).

  \`Alvarez, Garc\'ia and Zarzuela established the degeneration 
  on page two of a certain spectral sequence
  \begin{eqnarray}\label{eqn-MV-ss}
  E^{-i,j}_2={\ilim_{p\in \posP_Y}}^{(i)}H^j_{I_p}(R_n)&\Longrightarrow&
  H^{j-i}_I(R_n)
  \end{eqnarray}
  for the local cohomology groups $H^\bullet_I(R_n)$, the inverse limits
  being taken over the poset $\posP_Y$ viewed as a category with a
  morphism for each containment. In
  \cite[Thm.~1.2]{AlvarezGarciaZarzuela}, the structure of the derived
  inverse limits is explained as direct sums of modules $H^j_{I_p}(R)$
  with $\codim(I_p)=j$ and multiplicity given by the topological Betti
  numbers of $K(>p)$. In \cite[Cor.~1.3]{AlvarezGarciaZarzuela}, this
  is used to give a formula for the cohomology groups of the
  complement of $Y$, by translating the Goresky--McPherson formula.

  The affine complement of an affine space is homotopy equivalent to a
  sphere, hence applying the de Rham functor to a module of the form
  $H^j_{I_p}(R)$ gives exactly one (reduced) cohomology group. Thus,
  the entries of the $E_2$-page of the \v Cech--de Rham spectral
  sequence \eqref{eqn-Cech-deRham} correspond exactly to the
  composition factors of $H^{j-i}_I(R)$ in the spectral sequence
  \eqref{eqn-MV-ss} from \cite{AlvarezGarciaZarzuela} on one side, and
  to the direct summands for $H^\bullet(\CC^n\minus Y)$ in
  \cite{GoreskyMcPherson-book} on the other. It follows that for
  complex subspace arrangements $Y$ the \v Cech--de Rham spectral
  sequence collapses on the $E_2$-page.  \schluss
\end{exa}

In small
dimensions we show that Switala's question has a positive answer as
well. 

\begin{prop}\label{prop-cdR-dim3}
  If $I$ is homogeneous and $\dim(\Var(I))\le 3$ then the \v Cech--de
  Rham spectral sequence degenerates at $E_2$.
\end{prop}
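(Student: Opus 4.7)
The plan is to combine Proposition \ref{prop-rho-diag} with a dedicated argument for one last differential. By Proposition \ref{prop-rho-diag} the table $P^r(Y)$ is supported on the triangle $\{0\le p\le q\le d\}$. Since $d_r$ sends $(p,q)$ to $(p-r,q+r-1)$, both endpoints can lie in this triangle only when $r\le p\le q\le d-r+1$, which forces $2r\le d+1$. For $d\le 3$ and $r\ge 2$ the unique solution is $(r,p,q)=(2,2,2)$, and it occurs only when $d=3$, with target $(0,3)$. Consequently, for $d\le 2$ no higher differential is available and the spectral sequence degenerates at $E_2$ trivially, while for $d=3$ the statement reduces to showing that
\[
d_2\colon \rho^2_{2,2}(Y)=H^{n-2}_\dR(H^{n-2}_I(R_n))\longrightarrow H^n_\dR(H^{n-3}_I(R_n))=\rho^2_{0,3}(Y)
\]
vanishes.

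Following the proof of Proposition \ref{prop-rho-diag}, I would translate this last $d_2$ via Riemann--Hilbert into the corresponding differential
\[
\HH^{-2}(Y,{}^p\calH^{-2}\omega_Y)\longrightarrow \HH^0(Y,{}^p\calH^{-3}\omega_Y)
\]
of the perverse-hypercohomology spectral sequence for $\omega_Y$ on $Y$. The homogeneity of $I$ enters as follows: each $H^j_I(R_n)$ underlies a mixed Hodge module in Saito's sense, so the Čech--de Rham spectral sequence is a spectral sequence of mixed Hodge structures and every $d_r$ is strict with respect to the weight filtration. The plan is then to verify that the weights appearing in source and target are disjoint. Heuristically, ${}^p\calH^{-3}\omega_Y$ is the top perverse cohomology sheaf of $\omega_Y$ and, at the generic points of the three-dimensional part of $Y$, is pure of the weight prescribed by $\dim Y$, whereas ${}^p\calH^{-2}\omega_Y$ is supported on a strictly lower-dimensional subvariety and its bottom hypercohomology $\HH^{-2}$ picks up weights strictly smaller than that pure weight; strictness of $d_2$ then forces $d_2=0$.

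The main obstacle I expect is precisely this weight comparison: verifying disjointness rigorously requires matching Saito's weight filtration on the local cohomology of homogeneous ideals with the perverse indexing of $\omega_Y$, and is delicate when $Y$ is not equidimensional, since then ${}^p\calH^{-2}\omega_Y$ may receive a contribution from a two-dimensional irreducible component of $Y$ that is itself pure. A fallback route, if the weight argument proves too fiddly, is to use the $\CC^*$-equivariance furnished by homogeneity: a Braden-type hyperbolic-localization argument identifies each $\HH^\bullet(Y,\calF)$ for a $\CC^*$-equivariant constructible complex $\calF$ with the stalk of $\calF$ at the vertex, reducing degeneration to an explicit splitting of the perverse truncation filtration on a finite-dimensional $\CC^*$-graded complex of vector spaces.
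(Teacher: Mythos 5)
Your reduction is correct and matches the paper: by Proposition \ref{prop-rho-diag} and the bidegree count, the only possibly nonzero differential for $d\le 3$ is $d_2\colon \rho^2_{2,2}\to\rho^2_{0,3}$, occurring only when $d=3$. But the actual vanishing of this $d_2$ is exactly where your proposal stops being a proof. The weight-disjointness claim is offered only heuristically, and you yourself flag that it is unclear precisely in the non-equidimensional case --- which is the only case where the source is genuinely nonzero: if $Y$ has a two-dimensional irreducible component $Y'$, then ${}^p\calH^{-2}\omega_Y$ contains the (pure) intersection complex of $Y'$, whose hypercohomology in degree $-2$ has no reason to avoid the weights of $\HH^0$ of ${}^p\calH^{-3}\omega_Y$; strictness of morphisms of mixed Hodge structures alone does not force the map to vanish. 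The fallback via contraction/hyperbolic localization likewise identifies hypercohomology with a stalk at the vertex but gives no mechanism for splitting the perverse filtration, i.e.\ no vanishing of the differential.

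The paper's route is more elementary and closes exactly this gap. For $Y$ purely $3$-dimensional, Remark \ref{rmk-L-argument}(2) gives $\dim\Supp(H^{n-2}_I(R_n))<2$, so by Lemma \ref{lem-rw} the source $H^{n-2}_\dR(H^{n-2}_I(R_n))$ is the socle of $H^2_\frakm(\DD H^{n-2}_I(R_n))=0$ (Grothendieck vanishing): the source of $d_2$ is simply zero, no weight comparison needed. (Topologically this is the statement that a perverse sheaf supported in dimension $\le 1$ has no hypercohomology in degree $-2$, which is sharper than any weight estimate.) In the non-equidimensional case the paper splits $Y=Y_3\cup Y'$ and uses Mayer--Vietoris: $H^{n-3}_I(R_n)$ receives no contribution from $Y'$ (so the target of the $Y'$-component of $d_2$ vanishes), while the correction term $C\subseteq H^{n-1}_{Y_3\cap Y'}(R_n)$ has support of dimension $\le 1$ and hence contributes nothing to $H^{n-2}_\dR$, so the source decomposes and the $Y_3$-component vanishes by the equidimensional case. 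Your proposal contains neither the vanishing of the source nor a decomposition handling mixed dimensions, so as it stands it does not prove the proposition.
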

\begin{proof}
  Let $Y$ be of dimension 2 or less. If follows from 
  Proposition \ref{prop-rho-diag} that no nonzero differential can exist in the spectral sequence.

  Let now $\dim(Y)=3$.
  Then Proposition \ref{prop-rho-diag} implies that then there might be
  at most one nonzero differential,
  \begin{gather}\label{eqn-d2map}
    d_2\colon H^{n-2}_\dR(H^{n-2}_I(R_n))
    \to H^n_\dR(H^{n-3}_I(R_n)),
    \end{gather}
  linking $\rho_{2,2}$ and $\rho_{0,3}$.
  
  Assume for the time being that $Y$ is purely 3-dimensional. Remark
  \ref{rmk-L-argument} says that $\dim\Supp(H^{n-i}_I(R_n))<i$ for
  $i<3$. In particular, by Lemma \ref{lem-rw},
  $\dim H^{n-2}_\dR(H^{n-2}_I(R_n))$ equals the socle dimension of
  $H^2_\frakm(\DD H^{n-2}_I(R_n))=0$. Thus, the degeneration of the spectral
  sequence is forced.

  Now relax the equi-dimensionality condition and let $Y_3$ and $Y'$ be
  the 3-dimensional and smaller dimensional components of $Y$
  respectively. Then $Y_3\cap Y'$ is of dimension 1 or less, and the
  Mayer--Vietoris sequence implies that $H^{n-3}_{Y_3}(R_n)\oplus
  H^{n-3}_{Y'}(R_n) =H^{n-3}_Y(R_n)$ and that there is a short exact
  sequence
  \[
  0\to H^{n-2}_{Y_3}(R_n)\oplus H^{n-2}_{Y'}(R_n)\to H^{n-2}_Y(R_n)\to C\to
  0
  \]
  where $C$ is a (graded) submodule of $H^{n-1}_{Y_3\cap Y'}(R_n)$. In
  particular, the dimension of the support of $C$ is one or less
  by Remark \ref{rmk-L-argument}
  and so $H^{\le n-2}_\dR(C)$ is zero, being dual to
  the socle of $H^{\geq 2}_\frakm(C)=0$.

  Applying the de Rham functor, the resulting long exact sequence
  shows that $H^{n-2}_\dR(H^{n-2}_{Y_3}(R_n)\oplus H^{n-2}_{Y'}(R_n))$
  equals $H^{n-2}_\dR(H^{n-2}_Y(R_n))$.
  Then the map \eqref{eqn-d2map} is the direct sum of the
  corresponding $d_2$-morphism for
  $Y_3$ and for $Y'$ separately. But it is zero 
  on $H^{n-2}_\dR(H^{n-2}_{Y_3}(R_n))$ since the source of $d_2$ is zero
  in that case, and it is zero on $H^{n-2}_\dR(H^{n-2}_{Y'}(R_n))$ since
  the target is zero in that case.
\end{proof}

\subsection{Affine complements}

In the next two subsections we investigate to what extent the
cohomology of the affine complement, or its table of \v Cech--de Rham
numbers, of a homogeneous variety $Y$ is determined by the associated
projective variety $\tilde Y$. We start with looking at the top
cohomology group of the affine complement, and then investigate the
affine complement under Veronese maps. In the process we review some
algorithmic ideas that lead to a condition on the de Rham classes of
graded $\calD$-modules on affine space.

So, throughout, $\tilde Y$ is a projective variety and $Y\subseteq
\CC^n$ is a cone for $\tilde Y$.

\subsubsection{High cohomology groups of the affine complement}

\begin{rmk}\label{rmk-Ogus-1}
  Let $\tilde Y$ be a projective variety with cone $Y=\Spec(R_n/I)$.
  The following facts are due to Ogus
  \cite{Ogus-LCDAV} Let
\[
f_Y:=\min(k\in\NN|H^\ell_I(R)\text{ is Artinian for all }\ell>k)
\]
and
\[
v_Y:=\min(k\in\NN|H^\ell_I(R)\text{ is zero for all }\ell>k).
\]
\begin{enumerate}
  \item The number $n-f_Y$ is intrinsic to $\tilde Y$, it does not
    depend on the choice of the cone $Y$,\cite[Thm.~4.1]{Ogus-LCDAV}.
  \item The number $n-v_Y$ is intrinsic to $\tilde Y$, it does not
    depend on the choice of the cone $Y$,\cite[Thm.~4.4]{Ogus-LCDAV}
    and the remark following it.
\end{enumerate}
In particular,
\[
\rho^r_{p,q}=0 \quad \left\{\begin{array}{ll}\text{ if }&q<n-\nu_Y,\\\text{
  or }&p>0\text{ and }q<n-f_Y.\end{array}\right.
\]
\schluss
\end{rmk}  

We show next that in fact the top de Rham cohomology group of the
affine cone complement is usually determined by $\tilde Y$.

\begin{lem}\label{lem-topLocalDeRham}
Let $X=\AA^n_\CC$ and suppose $\tilde Y\subseteq \PP X=\PP^{n-1}_\CC$
is defined by the homogeneous ideal $I\subseteq
R_n:=\Gamma(X,\calO_X)$.  Let $Y=\Var(I)\subseteq X$ and assume that
$Y$ has codimension at least two. Then the index and the dimension of
the top non-vanishing de Rham cohomology group of $U:=X\minus Y$ is
encoded on $\tilde Y$.
\end{lem}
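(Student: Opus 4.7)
The plan is to exhibit $U$ as the total space of a principal $\CC^*$-bundle over $\tilde U := \PP^{n-1}\minus\tilde Y$ and to extract the top cohomology of $U$ from that of $\tilde U$ via a Gysin argument. Since $I\subseteq R_n$ is homogeneous, $Y$ is a cone with vertex at the origin and hence $U=X\minus Y$ is contained in $\AA^n\minus\{0\}$; the canonical fibration $\AA^n\minus\{0\}\to\PP^{n-1}$ then restricts to a principal $\CC^*$-bundle $\pi\colon U\to\tilde U$. The hypothesis $\codim(Y,X)\geq 2$ guarantees that $\tilde Y$ is a proper closed subvariety of $\PP^{n-1}$, so $\tilde U$ is nonempty and $K:=\max\{k : H^k(\tilde U;\CC)\neq 0\}$ is a well-defined nonnegative integer.

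Next I would invoke the Gysin long exact sequence for this bundle,
\[
\cdots\to H^{k-2}(\tilde U)\xrightarrow{\cup e} H^k(\tilde U)\xrightarrow{\pi^*} H^k(U)\to H^{k-1}(\tilde U)\xrightarrow{\cup e} H^{k+1}(\tilde U)\to\cdots,
\]
with $e$ the Euler class of the bundle (up to sign, $c_1(\calO_{\PP^{n-1}}(1))|_{\tilde U}$). For $m>K+1$ both $H^{m-1}(\tilde U)$ and $H^m(\tilde U)$ vanish, forcing $H^m(U)=0$. For $m=K+1$ the sequence collapses, since $H^{K+1}(\tilde U)=H^{K+2}(\tilde U)=0$ by maximality of $K$, to an isomorphism
\[
H^{K+1}(U)\xrightarrow{\sim} H^K(\tilde U),
\]
which is nonzero by definition of $K$. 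Hence the top non-vanishing de Rham cohomology of $U$ sits in degree $K+1$ with dimension $\dim_\CC H^K(\tilde U)$, both read off directly from $\tilde U=\PP^{n-1}\minus\tilde Y$ and so encoded on $\tilde Y$. The identification of algebraic de Rham with singular cohomology on the smooth variety $U$ is Grothendieck's comparison theorem.

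The main obstacle is verifying the bundle structure $\pi\colon U\to \tilde U$, but this is immediate from the cone structure of $Y$; beyond that the argument is a routine Gysin degree-count. A more algebraic route through the spectral sequence \eqref{eqn-Cech-deRham} would try to pinpoint the rightmost surviving entry $E_\infty^{p,q}$ using Proposition \ref{prop-rho-diag} and Remark \ref{rmk-Ogus-1}, but small examples (e.g.\ a coordinate line in $\AA^3$) show that the naively expected corner entry $(n,v_Y)$ need not contribute, making that bookkeeping more delicate than the present approach.
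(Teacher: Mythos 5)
Your Gysin argument correctly handles only the easy half of the statement. The reduction ``top cohomology of $U$ lives one degree above the top cohomology of the projective complement, with the same dimension'' is essentially the paper's own second step (the paper runs the Leray spectral sequence of the $\CC^*$-bundle $U\to \PP U$, which for $\CC$-coefficients is the same computation as your Gysin sequence; the codimension hypothesis enters there via simple connectedness of $U$ to trivialize the coefficient system, a point you gloss over, though for a bundle with connected structure group the monodromy on $H^*(\CC^*)$ is trivial anyway). So up to the isomorphism $H^{K+1}(U;\CC)\simeq H^K(\PP^{n-1}\minus\tilde Y;\CC)$ your computation is fine.

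The genuine gap is the final sentence: ``read off directly from $\tilde U=\PP^{n-1}\minus\tilde Y$ and so encoded on $\tilde Y$.'' In this paper ``encoded on $\tilde Y$'' means intrinsic to the abstract projective variety $\tilde Y$, independent of the choice of cone, i.e.\ of the very ample line bundle and hence of the ambient $\PP^{n-1}$ --- that is exactly how the lemma is used in Corollary \ref{cor-topLocalDeRham} and later. The complement $\PP^{n-1}\minus\tilde Y$ is \emph{not} a priori determined by $\tilde Y$ alone: it depends on the embedding, so your argument as written only proves the tautology that the top cohomology of $U$ is determined by the embedded variety $\tilde Y\subseteq\PP^{n-1}$. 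The missing (and main) step, which the paper supplies, is to show that the relevant Betti numbers of $\PP U$ are computable from the Betti numbers of $\tilde Y$ alone: one combines Alexander duality $H^i_{\tilde Y}(\PP^{n-1};\CC)\simeq H^{2n-2-i}(\tilde Y;\CC)^{\vee}$ with the long exact sequence of the pair and the key fact that the restriction $H^i(\PP^{n-1};\CC)\to H^i(\tilde Y;\CC)$ is injective for $i\le 2\dim_\CC\tilde Y$ (and zero above), which is proved by a positivity-of-degree argument with the cohomology fundamental class. This pins down the ranks of all maps $H^i_{\tilde Y}(\PP^{n-1};\CC)\to H^i(\PP^{n-1};\CC)$ purely in terms of the Betti numbers and dimension of $\tilde Y$, and hence the sizes of the groups $H^i(\PP U;\CC)$; only then does the dimension of the top group (and its index, up to the predictable shift coming from $n$) become independent of the chosen embedding. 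Without some such argument your proof does not establish the statement in the sense it is needed.
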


\begin{proof}
    We recall Alexander duality, compare \cite[V.6.6]{Iversen}: if $\PP$
    is a $\CC$-orientable manifold and $\tilde Y$ a closed subset then
    the topological local cohomology group $H^i_{\tilde Y}(\PP;\CC)$ is
    canonically identified with the $\CC$-dual of the cohomology with
    compact support $H^{2\dim_\CC \PP-i}_c(\tilde Y;\CC)$. If $\tilde Y$ is,
    in addition, compact, the latter is just $H^{2\dim_\CC \PP-i}(\tilde
    Y;\CC)$.

  On the other hand, \cite[II.9.2]{Iversen} states the existence of a
  long exact sequence
  \begin{gather}\label{eqn-les-Iversen}
  H^i_{\tilde Y}(P;\calF)\to H^i(P;\calF)\to H^i(P\minus
  \tilde Y;\calF)\stackrel{+1}{\to}
  \end{gather}
  where $\calF$ is a sheaf of Abelian groups on $P$ and we ease
  notation by ignoring the pull-backs of $\calF$ to $\tilde Y$ and its
  complement
  respectively. Notice that one can get this long exact sequence by applying hypercohomology to the first triangle in Remark \ref{rmk-RH-stuff} (4).  We use these with  $\PP=\PP X$ and $\tilde Y$ as above.

Via Poincar\'e duality, the map $H^{i}_{\tilde Y}(\PP X;\CC)\to
H^{i}(\PP X;\CC)$ becomes $H^{2n-2-i}(\tilde Y;\CC)^\vee\to
H^{2n-2-i}(\PP X;\CC)^\vee$. This is the dual of $H^{2n-2-i}(\PP
X;\CC)\to H^{2n-2-i}(\tilde Y;\CC)$ induced by restriction from $\PP
X$ to $\tilde Y$. The restriction $H^{i}(\PP X;\CC)\to H^{i}(\tilde
Y;\CC)$ is injective
\footnote{The cohomology fundamental class of $Y$
  in $H^{2 \dim_\CC(\tilde Y)}(\PP X;\CC)$ evaluates on the homology
  class of a generic $\PP^{n-1-\dim_\CC(\tilde Y)}\subseteq \PP X$ to
  the 0-cycle given by the intersection of $\tilde Y$ with that
  generic subspace. But this intersection is the degree of $\tilde Y$,
  hence positive. Thus the restriction of the class represented by
  this subspace on $\PP X$, a generator of $H^{2\dim_\CC(\tilde
    Y)}(\PP X;\CC)$, to $\tilde Y$ is nonzero. But cohomology of
  projective space is a polynomial algebra in the hyperplane section,
  and if the $\dim_\CC(\tilde Y)$-power of the hyperplane restricts to
  a nonzero class on $\tilde Y$ then so do all smaller powers.}
for
$i\le 2\dim \tilde Y$ and necessarily zero for $i>2\dim_\CC(\tilde Y)$
since $\tilde Y$ is a CW-complex of dimension $2\dim_\CC(\tilde
Y)$. Thus, one can determine from the topological Betti numbers of
$\tilde Y$ alone the sizes of the kernels of the left-most morphisms
in display \eqref{eqn-les-Iversen}. This in turn determines the sizes of the
cohomology groups of $\PP U:=\PP X\minus \tilde Y$.

%
%

   As $\codim(Y)\geq 2$,
  $U$ is simply connected by \cite[Thm.~2.3]{Godbillon}.
  Thus, the $\CC^*$-fiber bundle $U\to \PP U$
  has a Leray spectral sequence
  \[
  H^i(\PP U;H^j(\CC^*;\CC))\Longrightarrow H^{i+j}(U;\CC)
  \]
  in which the coefficients on the left are global (in a trivial
  vector bundle). Let $m$ be the largest index with $H^m(\PP
  U;\CC)\neq 0$. Since all differentials out of and into $H^{m}(\PP
  U;H^1(\CC^*;\CC))\neq 0$ are zero, $m+1$ must be the largest index with
  $H^{m+1}(U;\CC)\neq 0$ and $\dim_\CC
  H^{m}(\PP U;\CC)=\dim_\CC H^{m+1}(U;\CC)$.
\end{proof}

\begin{cor}\label{cor-topLocalDeRham}
  Let $Y$ be an affine variety defined by the homogeneous ideal
  $I\subseteq R_n=\CC[x_1,\ldots,x_n]$. If the local cohomology group
  $H^\ell_I(R)$ is Artinian and $H^{>\ell}_I(R_n)=0$ then the socle
  dimension $s$ of $H^\ell_I(R)$ is a function of the projective variety
  $\tilde Y=\PP(Y)$ and does not depend on the choice of the cone $Y$.
\end{cor}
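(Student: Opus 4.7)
The plan is to read off the socle dimension $s$ from the top nonvanishing reduced cohomology of the affine complement $U = X \minus Y$, and then to apply Lemma \ref{lem-topLocalDeRham} to conclude that this number depends only on $\tilde Y$.

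By Remark \ref{rmk-L-argument}(3), the Artinian hypothesis forces an isomorphism $H^\ell_I(R_n) \cong E^{\oplus s}$ where $E := H^n_\frakm(R_n)$ is the injective hull of $R_n/\frakm$; computing the socle dimension of $H^\ell_I(R_n)$ thus reduces to counting summands. If $s = 0$ there is nothing to prove, so I assume $s > 0$; then $\ell$ agrees with $v_Y$ in the notation of Remark \ref{rmk-Ogus-1}, and hence $\ell$ itself is already an invariant of $\tilde Y$. Next one verifies that $H^n_\dR(E) = \CC$: the \v Cech--de Rham spectral sequence \eqref{eqn-Cech-deRham} applied to $\frakm$ has a unique nonzero column (at $q = n$), collapses trivially on the $E_2$-page, and has abutment the reduced cohomology of $\CC^n \minus \{0\} \simeq S^{2n-1}$. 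By additivity, $\dim H^n_\dR(H^\ell_I(R_n)) = s$.

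I would then feed this value back into \eqref{eqn-Cech-deRham} for the ideal $I$ itself. The hypothesis $H^{>\ell}_I(R_n) = 0$ restricts nonzero $E_2$-terms to rows $q \le \ell$, while standard de Rham vanishing restricts them to columns $p \le n$; hence $(n, \ell)$ is the unique position of maximal total degree $n + \ell$. A differential $d_r$ ($r \ge 2$) entering this position would originate at $(n - r, \ell + r - 1)$ and die by the row bound, while one leaving it would land at $(n + r, \ell - r + 1)$ and die by the column bound. Thus $E_\infty^{n, \ell} = E_2^{n, \ell}$ is the only $E_\infty$-term contributing to the abutment in total degree $n + \ell$, so $\dim \tilde H^{n + \ell - 1}(U; \CC) = s$. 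Since $s > 0$, this degree is also the top one in which reduced cohomology of $U$ is nonzero, and Lemma \ref{lem-topLocalDeRham} identifies both its index $n + \ell - 1$ and its dimension $s$ as invariants of $\tilde Y$, proving the claim.

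The only delicate point is the corner-survival argument for $(n, \ell)$, but this is straightforward bookkeeping from the two vanishing boundaries (the row bound from $H^{>\ell}_I(R_n) = 0$ and the column bound from de Rham vanishing in degrees $> n$), so I do not anticipate any serious obstacle.
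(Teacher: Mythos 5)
Your argument is correct and takes essentially the same route as the paper: reduce the claim to the statement that $s$ equals the dimension of the top non-vanishing reduced cohomology group $H^{n+\ell-1}(U;\CC)$ of the affine complement $U=X\minus Y$, and then invoke Lemma \ref{lem-topLocalDeRham}. The only difference is that the paper simply cites \cite[Thm.~3.1]{LSW} for the identification $s=\dim_\CC H^{n+\ell-1}(U;\CC)$ together with the vanishing above that degree, whereas you rederive this input from the decomposition $H^\ell_I(R_n)\cong H^n_\frakm(R_n)^{\oplus s}$, the computation $H^n_\dR(H^n_\frakm(R_n))=\CC$, and the corner-survival argument in the spectral sequence \eqref{eqn-Cech-deRham} --- a valid substitution.
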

\begin{proof}
  Let $X=\Spec(R)$ and set $U=X\minus Y$.
  By \cite[Thm.~3.1]{LSW}, $s=\dim_\CC H^{n+\ell-1}(U;\CC)$, and $U$
  has no higher non-vanishing cohomology groups. Now use the previous lemma.
\end{proof}

\begin{rmk}
  Even if $H^\ell_I(R)$ is not  Artinian, the dimension $\dim_\CC
  H^n_\dR(H^\ell_I(R_n))$ is encoded by $\tilde Y$.\schluss
\end{rmk}

\subsubsection{Integrals of Eulerian modules}


 We investigate  next 
 to what extent the $\rho_{p,q}^r$, or the abutment terms
 $H^\dR_\bullet(Y)$ of the \v Cech--de Rham spectral sequence are
 independent of the cone $Y$ (\emph{i.e.}, the line bundle $\calL$ on
 $\tilde Y$ that induces the cone). In the following we show that
 replacing $\calL$ by a power of itself does not change the
 $H^\dR_\bullet(Y)$.

For this we
give an account on the main results on algorithmic
computation of the integral of a $D_n$-module along
$\del_1,\ldots,\del_n$. See \cite{OT1,OT2,Walther-cdrc} for
details, and a generalization to the case when $M$ is a bounded
complex of finitely generated modules that has holonomic cohomology.

We define a grading $\gr^i_\tildeV(D_n):=\{P\in D_n|\deg(P)=i\}$ on
$D_n$ by setting
\[
\deg(x_j)=1=-\deg(\del_j)
\]
for all $1\le j\le n$. With it we define a filtration on $D_n$ by
\[
\tildeV^k(D_n)=\sum_{i\le k}\gr^i_{\tildeV}(D_n).
\]

Let $M$ be a $D_n$-module, finitely generated by elements
$m_1,\ldots,m_r$, and choose integers $s_1,\ldots,s_r$. Then define a
filtration on $M$ by setting
\[
\tildeV^k(M)=\sum_{i=1}^r \tildeV^{k-s_i}(D_n)\cdot m_i.
\]
Denote the operator $-\sum_{j=1}^n \del_j\cdot x_j$ by $\tilde E$.

 It is a result of Kashiwara \cite{Kashiwara-holonomicII} that when $M$ is holonomic there is a
\emph{$b$-function for integration} $\tilde b_M(s)$. This is a
univariate polynomial that satisfies
\begin{gather}\label{eqn-bfu}
\tilde b_M(\tilde E+n+k)\cdot \tildeV^k(M)\subseteq \tildeV^{k-1}(M)
\end{gather}
for all $k\in\ZZ$. 
We describe now
ideas that lead to a proof for Proposition
\ref{prop-integrate-Eulerian} below.

As before, let $\omega_n$ be the right $D_n$-module $(D_n/\del\cdot
D_n)$ where $\del=\{\del_1,\ldots,\del_n\}$.
This is a free rank one $R_n$-module, and can be naturally
identified with the $D_n$-module $\Ext^n_{D_n}(R_n,D_n)$,
and with the global
sections of the right $\calD_X$-module  of
top differential forms $\calO_X\cdot \de x_1\wedge\ldots\wedge \de x_n$, \cite{HTT}.
Give it a
$\tildeV$-filtration by placing the generator $1+\del\cdot D_n$ into
$\tildeV$-level $n$.

The $D$-module theoretic direct image functor $\pi_+$ for
the projection map $\pi\colon \CC^n\to\CC^0$
can on global sections be identified with $\omega_n\otimes_{D_n}^L (-)$
shifted by $n$, computing the
Tor-functors against $\omega_n$.  This derived tensor product can be
viewed as the tensor product of $\omega_n$ with a free $D_n$-resolution
$F^\bullet$ of the input module $M$, or of a free resolution
$K^\bullet$ of $\omega$ with $M$, or of the tensor product of
$K^\bullet$ with $F^\bullet$.  There are natural morphisms from the
last scenario to the two former ones that induce isomorphisms on
cohomology.

One major difficulty in identifying $\pi_+(M)$ is that its homology
consists of
finite-dimensional vector spaces with no further module structure,
while the modules that appear in the complex  are infinite-dimensional
vectors spaces with no further module structure.

A free resolution $F^\bullet$ of $M$ is \emph{$\tildeV$-strict} if
each $F^i$ is equipped with a $\tildeV$-filtration $\tildeV (F^i)$
such that every differential $\delta^i\colon F^i\to F^{i+1}$ satisfies
$\delta^i(\tildeV^k(F^i))\subseteq \tildeV^k(F^{k+1})$ and moreover
$\delta^i(F^i)\cap \tildeV^k(F^{i+1})=\delta^i(\tildeV^k(F^i))$. It is
a theorem of algorithmic algebraic analysis that finitely generated
$V$-filtered $D_n$-modules do allow $\tildeV$-strict resolutions of
finite length.
The $\tildeV$-filtration on $F^\bullet$ induces a quotient filtration
on $\omega\otimes_{D_n}F^\bullet$. This filtered complex may not be
strict anymore, but still the morphisms will respect the
filtration. The $\tildeV$-filtration on $\omega_n\otimes_{D_n}F^\bullet$ is
bounded below while on $F^\bullet$ it is not. Moreover,
$\gr^k_\tildeV(F^i)$ is infinite dimensional over $\CC$, while each
$\gr^k_\tildeV(\omega_n\otimes_{D_n}F^i)$ is $\CC$-finite.
Nonetheless, the
$\CC$-dimension of each $\omega_n\otimes_{D_n}F^i$ is still infinite.

Let $\ell$ be the largest and $s$ the
smallest integral root of the $b$-function $\tilde b_M(s)$.

\begin{thm}[Integration Theorem \cite{OT1,OT2}]\label{thm-int-thm}
  With notation as introduced above, the morphisms
  \[
  \omega_n\otimes_{D_n} F^\bullet\hookleftarrow
  \tildeV^\ell(\omega_n\otimes_{D_n} F^\bullet)\onto
  \tildeV^\ell(\omega_n\otimes_{D_n} F^\bullet)/
  \tildeV^{s-1}(\omega_n\otimes_{D_n} F^\bullet)
\]
are quasi-isomorphisms.

In other words, every cohomology class of
$\Tor_\bullet^{D_n}(\omega_n,M)$ has a representative inside
$\tildeV^\ell(\omega_n\otimes_{D_n} F^\bullet)$, and the complex
$\tildeV^{s-1}(\omega_n\otimes_{D_n} F^\bullet)$ is exact.
\end{thm}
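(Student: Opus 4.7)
The statement packages two complementary quasi-isomorphism claims which I immediately rephrase as acyclicity assertions: the inclusion $\tildeV^\ell(\omega_n\otimes_{D_n}F^\bullet)\hookrightarrow \omega_n\otimes_{D_n}F^\bullet$ is a quasi-isomorphism if and only if the quotient complex $(\omega_n\otimes_{D_n}F^\bullet)/\tildeV^\ell(\omega_n\otimes_{D_n}F^\bullet)$ is acyclic, while the projection $\tildeV^\ell\twoheadrightarrow \tildeV^\ell/\tildeV^{s-1}$ is a quasi-isomorphism if and only if the subcomplex $\tildeV^{s-1}(\omega_n\otimes_{D_n}F^\bullet)$ is acyclic. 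Both targets inherit the $\tildeV$-filtration, so it will be enough to show that each graded piece $\gr^N_{\tildeV}(\omega_n\otimes_{D_n}F^\bullet)$ is acyclic for every integer $N$ outside the range $[s,\ell]$.

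The computational heart of the argument is that tensoring with $\omega_n$ annihilates $\tilde E=-\sum_j\del_jx_j$: since $\bar 1\cdot\del_j=0$ in $\omega_n=D_n/\del\cdot D_n$, for every $m\in M$
\[
\bar 1\otimes \tilde E m\;=\;(\bar 1\cdot \tilde E)\otimes m\;=\;-\sum_j(\bar 1\cdot\del_j)\,x_j\otimes m\;=\;0,
\]
and by iteration $\bar 1\otimes \tilde E^j m=0$ for every $j\ge1$. Expanding the polynomial $\tilde b_M(\tilde E+n+k)$ in powers of $\tilde E$ with scalar coefficients, only the constant term contributes after tensoring with $\bar 1$, yielding the crucial scalar identity
\[
\bar 1\otimes\bigl(\tilde b_M(\tilde E+n+k)\,m\bigr)\;=\;\tilde b_M(n+k)\cdot(\bar 1\otimes m).
\]
Combined with the $b$-function relation \eqref{eqn-bfu}, this means that multiplication by the scalar $\tilde b_M(n+k)$ sends $\tildeV^k$ into $\tildeV^{k-1}$ inside $\omega_n\otimes_{D_n}M$; after absorbing the shift that places the generator $\bar 1$ at $\tildeV$-level $n$ into the indexing of the tensor product filtration, the conclusion is that the scalar $\tilde b_M(N)$ annihilates $\gr^N_{\tildeV}(\omega_n\otimes_{D_n}M)$, and by definition of $\ell$ and $s$ as the extremal integer roots of $\tilde b_M$ this scalar is nonzero exactly when $N\notin[s,\ell]$.

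Because $F^\bullet$ is $\tildeV$-strict, the same identity is inherited by each $F^i$ and therefore by the filtered complex $(\omega_n\otimes_{D_n}F^\bullet,\tildeV)$. Thus for every $N\notin[s,\ell]$, the operator $\tilde b_M(N)\cdot\id$ is a chain map on the complex $\gr^N_{\tildeV}(\omega_n\otimes_{D_n}F^\bullet)$ of $\CC$-vector spaces that is both identically zero and a nonzero scalar, forcing the cohomology of this graded piece to vanish.

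To pass from graded acyclicity to total acyclicity I bootstrap via the long exact cohomology sequences attached to $0\to\tildeV^{N-1}\to\tildeV^N\to\gr^N_{\tildeV}\to 0$. For $\tildeV^{s-1}(\omega_n\otimes_{D_n}F^\bullet)$ I induct upward starting from a lower bound of the filtration, which exists because each $F^i$ is finitely generated and the generator of $\omega_n$ sits at $\tildeV$-level $n$, so that $\omega_n\otimes_{D_n}F^i$ is $\tildeV$-bounded below; for the quotient $(\omega_n\otimes_{D_n}F^\bullet)/\tildeV^\ell$ I express it as $\dlim_{N\ge\ell}\tildeV^N/\tildeV^\ell$, show by the analogous upward induction that each $\tildeV^N/\tildeV^\ell$ is acyclic, and invoke exactness of $\dlim$ on complexes of $\CC$-vector spaces. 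The genuine technical obstacle is this bootstrap: one must verify that the $\tildeV$-filtration on $\omega_n\otimes_{D_n}F^\bullet$ is exhaustive above and suitably bounded below for the induction and the direct limit to converge, and this is precisely what the $\tildeV$-strictness of $F^\bullet$ and Kashiwara's existence theorem for the $b$-function of integration of a holonomic module deliver.
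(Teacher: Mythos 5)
Your overall skeleton is sound and in fact close in spirit to the published arguments that the paper simply quotes (the paper gives no proof of Theorem \ref{thm-int-thm}; it cites \cite{OT1,OT2}): reduce both quasi-isomorphisms to acyclicity of the graded pieces $\gr^N_{\tildeV}(\omega_n\otimes_{D_n}F^\bullet)$ for integers $N\notin[s,\ell]$, then bootstrap using that the filtration on $\omega_n\otimes_{D_n}F^\bullet$ is exhaustive and bounded below, with a direct limit for the quotient. Also your computation that $\bar 1\cdot\tilde E=0$ in $\omega_n$, hence that the scalar $\tilde b_M(N)$ annihilates $\gr^N_{\tildeV}(\omega_n\otimes_{D_n}M)$, is correct.

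The genuine gap is the sentence ``Because $F^\bullet$ is $\tildeV$-strict, the same identity is inherited by each $F^i$.'' It is not: the $b$-function relation \eqref{eqn-bfu} is a property of the holonomic module $M$ and fails for free modules. Indeed, for a free generator $e$ placed in level $0$, the element $\tilde b(\tilde E+n)e$ has nonzero image in $\gr^0_{\tildeV}(F^i)$, because $\gr_{\tildeV}(D_n)\cong D_n$ contains $\CC[\tilde E]$ faithfully; so no nonzero polynomial can push $\tildeV^k(F^i)$ into $\tildeV^{k-1}(F^i)$. Consequently $\tilde b_M(N)$ does not act as zero on the terms $\gr^N_{\tildeV}(\omega_n\otimes_{D_n}F^i)$ --- these are nonzero for arbitrarily large $N$, and if your claim (``identically zero and a nonzero scalar'') were literally true it would force the terms themselves, not just their cohomology, to vanish, which is absurd. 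What you actually need is that the \emph{cohomology} of $\gr^N_{\tildeV}(\omega_n\otimes_{D_n}F^\bullet)$ vanishes when $\tilde b_M(N)\neq 0$, and for this strictness must be used for what it really gives: $\gr_{\tildeV}(F^\bullet)$ is a graded free resolution of $\gr_{\tildeV}(M)$ over $\gr_{\tildeV}(D_n)\cong D_n$, so that cohomology is the degree-$N$ part of $\Tor^{D_n}_\bullet(\omega_n,\gr_{\tildeV}M)$. Your scalar computation only handles $\Tor_0$; to kill the higher Tor modules one needs an additional ingredient, for instance computing $\omega_n\otimes^L_{D_n}\gr_{\tildeV}M$ with the Koszul (de Rham) resolution of $\omega_n$ and using the contraction along the Euler vector field: the Cartan homotopy makes the Euler action null-homotopic, while on the internal degree-$N$ part its generalized eigenvalues avoid $0$ exactly when $\tilde b_M(N)\neq 0$, forcing that graded cohomology to vanish. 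Without this (or an equivalent argument lifting the $b$-function action through the graded resolution), the central acyclicity claim, and hence the theorem, is unproved.
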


Note that the subquotient complex $\tildeV^\ell(\omega_n\otimes_{D_n}
F^\bullet)/ \tildeV^{s-1}(\omega_n\otimes_{D_n} F^\bullet)$ is, in
contrast to $\omega_n\otimes F^\bullet$, 
$\CC$-finite,
reducing the
computation of $\pi_+(M)$ to finite-dimensional linear algebra in this
subquotient complex.

One can now just as well resolve $\omega_n$ and $M$, or just $\omega_n$,
and obtain other complexes that represent $\omega_n\otimes ^L_{D_n}M$. A
natural resolution for $\omega_n$ is the cohomological Koszul complex
$K^\bullet$ on the left-multiplications on $D_n$ by the various
$\del_j$. (So, $K^\bullet$ is the complex of global sections of
$\Omega^\bullet_{\calD,X}$). 
The module
$K^\ell$ has a natural generating set given by the size-$\ell$-subsets
of $1,\ldots,n$. We place these generators in $\tildeV$-level $\ell$
and extend $\tildeV$ to each $K^\ell$ by $D_n$-linearity. Since
$\del_i$ is in $\tildeV$-level $-1$, this produces a $\tildeV$-strict
resolution of $\omega_n$.  Having resolutions $K^\bullet, F^\bullet $
with $\tildeV$-filtration, there is an induced $\tildeV$-filtration on
$K^\bullet\otimes_{D_n}F^\bullet$.

The complex $K^\bullet\otimes_{D_n}M$ is sometimes called the (affine,
global) \emph{de
  Rham complex} of $M$. If $M$ is a space of functions on which one
can differentiate,  multiplication by $\del_i$ in $K^\bullet$
corresponds to differentiation by $x_i$ in the usual de Rham complex.

\medskip
Suppose now that $M=\bigoplus_{\ell\in\ZZ}M_\ell$ is a graded module
over the graded ring $D_n$, with homogeneous generators
$m_1,\ldots,m_r$ of degrees $s_1,\ldots,s_r$.  Choosing the degrees of the
generators as shifts (\emph{i.e.}, $s_i=\deg(m_i)$) for the
$\tildeV$-filtration on $M$ one obtains a direct sum of the graded components
of $M$,
\begin{gather}\label{eqn-gradedV}
\tildeV^k(M)=\sum_{i=1}^r \tildeV^{k-s_i}(D_n)\cdot m_i=\bigoplus_{\ell\le k} M_\ell.
\end{gather}
Since the twisted Euler operator $\tilde E=-\sum_{j=1}^n\del_jx_j$ is
$\tildeV$-homogeneous of degree zero, the defining equation \eqref{eqn-bfu}
becomes
\[
\tilde b_M(\tilde E+n+k)\cdot M_k =0
\]
for all $k\in\ZZ$.

For $\tildeV$-graded $M$ one can arrange the resolution $F^\bullet$ to
respect this grading, and $K^\bullet$ is graded in any case.  If now
$\eta_F$ is a cohomology class generator in
$H^i(\omega_n\otimes_{D_n}F^\bullet)$, one can lift it into
$K^n\otimes_{D_n}F^i$ and then chase it into a class $\eta_K$ of
$K^\bullet\otimes_{D_n}M$, since Tor is a balanced functor.  The
grading of the resolutions involved implies that the $\tildeV$-level
of this class in $K^\bullet\otimes_{D_n}M$ is the same as the
$\tildeV$-level of $\eta_F$ in $\omega_n\otimes_{D_n}F^\bullet$.

We recall the notion of an Eulerian $D_n$-module.
\begin{dfn}[\cite{MaZhang-Eulerian}]\label{dfn-Eulerian}
  The graded $D_n$-module $M=\bigoplus_{i\in\ZZ} M_i$ is
  \emph{Eulerian} if for every homogeneous $m\in M_i$ one has
  $(\sum_{j=1}^nx_j\del_j)m=i\cdot m$.

  In terms of $\tilde E$ this is equivalent to $(\tilde E
  +n+\deg(m))m=0$.
  \schluss
\end{dfn}
Eulerian $D_n$-modules are a very special case of Brylinski's
\emph{monodromic} modules, which are those on which the Euler operator
has a minimal polynomial. They include (iterated) local cohomology
modules
$H^{i_1}_{I_1}(\ldots(H^{i_k}_{I_k}(R_n)\ldots)$ for homogeneous ideals
$I_1,\ldots,I_k$. 
\begin{prop}\label{prop-integrate-Eulerian}
  Let $M$ be a finitely generated Eulerian $D_n$-module. Then every nonzero
  cohomology class of $\omega_n\otimes^L_{D_n}M$ has degree zero.
\end{prop}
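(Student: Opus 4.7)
The plan is to combine the Integration Theorem (Theorem~\ref{thm-int-thm}) with the observation that the Eulerian hypothesis forces the simplest possible $b$-function for integration, and then to read off the degree of cohomology classes from the resulting subquotient complex.

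First, I would verify that $\tilde b_M(s) = s$ serves as a $b$-function for integration. Using \eqref{eqn-gradedV}, an element of $\tilde V^k(M)$ decomposes as $\sum_{j \le k} m_j$ with $m_j \in M_j$. The Eulerian identity $(\tilde E + n + j)m_j = 0$ gives
\[
(\tilde E + n + k)m_j = (k - j)m_j \in M_j \subseteq \tilde V^{k-1}(M)
\]
for $j < k$, while the term with $j = k$ simply vanishes. Hence $(\tilde E + n + k) \cdot \tilde V^k(M) \subseteq \tilde V^{k-1}(M)$, so $\tilde b_M(s) = s$ works; in particular the largest and smallest integral roots of $\tilde b_M$ both equal zero.

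Next, with $\ell = s = 0$, I would apply Theorem~\ref{thm-int-thm} to a $\tilde V$-strict graded free resolution $F^\bullet$ of $M$ (which exists thanks to the graded $\tilde V$-filtration \eqref{eqn-gradedV}), obtaining quasi-isomorphisms
\[
\omega_n \otimes_{D_n} F^\bullet \;\hookleftarrow\; \tilde V^0(\omega_n \otimes_{D_n} F^\bullet) \;\twoheadrightarrow\; \tilde V^0(\omega_n \otimes_{D_n} F^\bullet)\big/\tilde V^{-1}(\omega_n \otimes_{D_n} F^\bullet).
\]
Because the generator of $\omega_n$ was placed in $\tilde V$-level $n$, matching its natural graded degree, and because the resolution was chosen graded, the induced $\tilde V$-filtration on $\omega_n \otimes_{D_n} F^\bullet$ coincides with the total degree; the subquotient on the right then picks out precisely the degree-zero components.

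To finish, I would transfer the conclusion to any other model of $\omega_n \otimes^L_{D_n} M$ (e.g.\ the de Rham complex $K^\bullet \otimes_{D_n} M$), invoking the Tor-balancing argument described in the paragraph preceding the proposition: a cohomology class $\eta_F$ lifts to the double complex $K^\bullet \otimes_{D_n} F^\bullet$ and descends to a class $\eta_K$ in $K^\bullet \otimes_{D_n} M$ of the same $\tilde V$-level. Consequently every nonzero cohomology class of $\omega_n \otimes^L_{D_n} M$ has total degree zero. The main obstacle I anticipate is purely bookkeeping: carefully aligning the three conventions (the grading on $M$, the $\tilde V$-level of the generator of $\omega_n$, and the grading on the Koszul resolution $K^\bullet$) so that the subquotient $\tilde V^0/\tilde V^{-1}$ really does extract the degree-zero part in a well-defined way. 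Once this dictionary is in place the result follows immediately from the Integration Theorem.
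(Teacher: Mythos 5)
Your proof is correct and follows essentially the same route as the paper: place the graded $\tildeV$-filtration \eqref{eqn-gradedV} on $M$, observe that the Eulerian identity makes $\tilde b_M(s)=s$ a $b$-function for integration, and conclude via the Integration Theorem \ref{thm-int-thm} with $\ell=s=0$. The extra details you supply (the explicit check on $\tildeV^k(M)$ and the Tor-balancing transfer to $K^\bullet\otimes_{D_n}M$) are consistent with the surrounding discussion in the paper and are not points of divergence.
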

\begin{proof}
  Since the module is Eulerian, we have $(\tilde E +n+\deg(m))m=0$ for
  every homogeneous $m\in M$.  We put the $\tildeV$-filtration on $M$
  that is induced by a finite set of homogeneous generators as in
  \eqref{eqn-gradedV}, with shifts $s_i=\deg(m_I)$. Then, a
  $b$-function for integration is given by $\tilde b(s)=s$.
  The conclusion is immediate from the Integration Theorem \ref{thm-int-thm}.
\end{proof}
\begin{rmk}
  Let us call \emph{quasi-Eulerian} a graded monodromic $D_n$-module
  $M$.  Then one can easily generalize Proposition
  \ref{prop-integrate-Eulerian} to: if $M$ is quasi-Eulerian then the
  degree of every cohomology class of $\omega_n\otimes^L_{D_n}M$ must be
  an integral root of the minimal polynomial of $\tilde E$ on $M$.

  There is a version of the Integration Theorem 
  for complexes of holonomic modules (more generally, for complexes
  that have a $b$-function for integration), see
  \cite{Walther-cdrc}. This allows a further generalization to finite
  graded complexes with quasi-Eulerian cohomology.  \schluss
\end{rmk}

We now consider the Eulerian $D_n$-module that arises as the
localization $M=R_n[1/f]$ of $R_n$ at a homogeneous polynomial $f$. It
is clear that this is an Eulerian module since the Euler operator $E$
acts on a rational homogeneous function of degree $k$ by
multiplication with $k$.
Thus, $K^\bullet\otimes_{D_n}M$ is $\tildeV$-graded and 
every class in $\omega_n\otimes ^L_{D_n}M$ has native degree zero.

If one reads elements of $K^\ell\otimes M$ as differential $\ell$-forms on
$M$, this implies that the cohomology of $K^\bullet\otimes_{D_n}M$ is
spanned as vector space by differential forms of degree zero: forms of
the type \[
\sum_{|I|=\ell\atop
  I\subseteq\{1,\ldots,n\}} \frac{g_I\de x_I}{f^{k_I}}
\]
where $\de
x_I=\wedge_{i\in I}\de x_i$, where $g_I$ is a homogeneous element of
$R_n$, and where $\deg(g_I)+\ell=k_I\cdot\deg(f)$.
Similarly, integrating a graded complex $M^\bullet$ with Eulerian cohomology
modules yields a de Rham complex of $M^\bullet$ with cohomology groups
concentrated in degree zero.

\begin{cor}
  If $I$ is a homogeneous ideal then the de Rham cohomology of the
  affine complement $U(I)=X\minus \Var(I)$ of the affine variety
  $\Var(I)\subseteq X:=\CC^n$ is generated by chains of differential
  forms of degree zero. Moreover, the de Rham cohomology groups
  $H^i_\dR(H^j_I(R))$ all are concentrated in degree zero. \qed
\end{cor}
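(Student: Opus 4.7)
The plan is to deduce both claims from Proposition \ref{prop-integrate-Eulerian} together with the fact that local cohomology modules at homogeneous ideals are Eulerian. Since $I$ is homogeneous, $H^j_I(R_n)$ arises as an iterated local cohomology module of $R_n$ at homogeneous ideals and is therefore Eulerian (as noted right after Definition \ref{dfn-Eulerian}). Applying Proposition \ref{prop-integrate-Eulerian} to $M = H^j_I(R_n)$ and combining with the identification
\[
H^i_\dR(H^j_I(R_n)) = H^{i-n}\bigl(\omega_n \otimes^L_{D_n} H^j_I(R_n)\bigr)
\]
immediately yields the second assertion: every class in $H^i_\dR(H^j_I(R_n))$ has native grading degree zero.

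For the first claim, I would compute $H^\bullet_\dR(U)$ via the \v Cech--de Rham spectral sequence \eqref{eqn-Cech-deRham}, whose $E_2$-page $\bigoplus_{p,q} H^p_\dR(H^q_I(R_n))$ lives entirely in degree zero by what was just proved. Since all modules and morphisms appearing in the construction (the \v Cech complex of $R_n$ on homogeneous generators $f_1,\ldots,f_r$ of $I$, the graded de Rham differential, and their interaction) respect the internal grading coming from the homogeneity of $I$, the whole spectral sequence is internally graded, and consequently the abutment is also concentrated in degree zero. Equivalently, one may argue directly: each term $R_n[1/(f_{i_0}\cdots f_{i_k})]$ of the \v Cech complex is Eulerian, so by the discussion preceding the corollary every cohomology class of the associated \v Cech--de Rham double complex is represented by a chain of differential forms of degree zero.

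The only genuinely delicate point is ensuring the grading compatibility of the spectral sequence, but this is automatic: the horizontal differentials are \v Cech boundaries induced by multiplication by homogeneous localisation elements, while the vertical differentials are pieces of the graded de Rham differential, and both preserve the internal grading. Once that observation is in place, both statements reduce to a formal application of Proposition \ref{prop-integrate-Eulerian}.
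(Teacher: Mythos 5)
Your proposal is correct and takes essentially the same route as the paper: the second claim is Proposition \ref{prop-integrate-Eulerian} applied to the Eulerian module $H^j_I(R_n)$, and for the first claim your ``equivalent direct argument'' on the \v Cech--de Rham double complex (Eulerian localizations $R_n[1/f_{i_0}\cdots f_{i_k}]$, classes of degree zero) is precisely the paper's proof, which uses the Grothendieck comparison theorem to identify $H^\bullet_\dR(U)$ with the cohomology of $K^\bullet\otimes_{D_n}\check C^\bullet$. The graded spectral-sequence packaging you lead with is only a cosmetic variant of that same argument.
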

\begin{proof}
  The Grothendieck comparison theorem asserts that the cohomology of
  $K^\bullet\otimes_{D_n}\check  C^\bullet$ is the de Rham cohomology of
  $U(I)$. The rest follows from Proposition \ref{prop-integrate-Eulerian}.
\end{proof}

\begin{rmk}
  Since multiplication by $\CC\ni\lambda\neq 0$ is an isomorphism on
  $U(f)$, the de Rham cohomology of $U(f)$ of a divisor is spanned
  by homogeneous differential forms (homothety eigenvectors) for all
  homogeneous $f\in R_n$. Alex
  Dimca pointed out that 
  path-connectedness of $\CC^*$ implies that this
  multiplication is in fact homotopy equivalent to the identity, and
  thus does not change the class. Hence, the cohomology of $U(f)$ must
  be eigenvectors to eigenvalue 1, and thus of degree zero.
\schluss
\end{rmk}

\subsection{On Veronese maps}
\label{subsec-veronese-complement}

Throughout this subsection, $2\le d,n\in\NN$.
Let
\[
v_n^d\colon X=\AA^n_\CC\to\AA^N_\CC=:W
\]
be the $d$-th Veronese morphism on the affine level, so
$N={n+d-1\choose n-1}$. If $n,d$ are understood, we abbreviate
$v_n^d$ to just $v$. We set $X':=v(X)\subseteq W$, $W^\circ=W\minus
\{0\}$, $X^\circ:=X\minus\{0\}$ and ${X'}^\circ:=X'\minus\{0\}$.

Let $R_n=\CC[x_1,\ldots,x_n]=\calO_X(X)$ and $R_N=\CC[\{y_S\mid S\in
  \NN^n, |S|=d\}]=\calO_W(W)$.  Let $I\subseteq R_n$ be a homogeneous
ideal and $Y$ the associated variety. Denote $U$ the complement
$X\minus Y$, and let $Y',U'$ the images of $Y,U$ under $v$. Let $U_W$
be the complement $W\minus v(Y)$.  We wish to compare here the cohomology
of the affine complements of $Y$ and $Y'$.

\bigskip

Note that $v^\#\colon R_N\to R_n$ sends
$y_S\mapsto x^S$ in multi-index notation. The $d$-th roots of unity
$\mmu_d$ act diagonally on $X$, as well as on every other variety of a
homogeneous ideal of $R_n$, by multiplication on each $x_i$. Moreover, $v$
is the orbit map to this action, followed by inclusion into $W$.  The
image of $v$ has a unique isolated singularity at the origin, and $v$
is a $d:1$ covering of ${X'}^\circ$ by $X^\circ$.

Note that $\mmu_d$ is the covering group of the map $U\to U'$, and its
order $d$ is nonzero in $\CC$. Under these circumstances,
$H^\bullet(U';\CC)$ is the group of $\mmu_d$-invariants in
$H^\bullet(U;\CC)$. Using the de Rham manifestation of $H^\bullet(U;\CC)$, in
which we showed that every class has a representative that is of
degree zero, the entire space $H^\bullet(U;\CC)$ is
$\mmu_d$-invariant, so that
\[
H^\bullet_\dR(U;\CC)=H^\bullet_\dR(U';\CC).
\]

In what follows, we replace de Rham cohomology by singular cohomology,
since we will have need to step outside the category of smooth algebraic
varieties. Known comparison theorems over $\CC$ assure functorial
isomorphisms between these cohomology theories whenever both exist.

It will turn out to be useful to know the cohomology of $W\minus
X'=W^\circ\minus {X'}^\circ$.  Note
that ${X'}^\circ$ has the homology of the homotopy $(2n-1)$-sphere
$X^\circ$ and is a closed submanifold of the $2N$-dimensional manifold
$W^\circ$, the latter being homotopy equivalent to the $(2N-1)$-sphere
$\SS^{2N-1}$. Alexander duality gives an isomorphism
$H^i_{{X'}^\circ}(W^\circ;\CC)\simeq
\Hom_\CC(H^{2N-i}_c({X'}^\circ;\CC),\CC)$ with the dual of compactly
supported cohomology, \cite[Alexander Duality V.6.6]{Iversen}. But
then ${X'}^\circ$ being a $2n$-dimensional real manifold yields by
Poincar\'e duality that $\Hom_\CC(H^{2N-i}_c({X'}^\circ;\CC),\CC)\simeq
H^{2n-2N+i}({X'}^\circ;\CC)$, \cite[I.(5.4)]{BottTu}. The latter is
$\CC$ for $i=2N-1$ and $i=2N-2n$, and zero otherwise. In the long exact
sequence
\[
\cdots\to H^i_{{X'}^\circ}(W^\circ;\CC)\to H^i(W^\circ;\CC)\to H^i(W^\circ\minus
{X'}^\circ;\CC)\stackrel{+1}{\to},
\]
we have $ H^i_{{X'}^\circ}(W^\circ;\CC)\neq 0$ only when
$i=2N-1,2(N-n)$ and $ H^i(W^\circ;\CC)\neq 0$ only if $i=2N-1,0$. The
map $\CC=H^{2N-1}_{{X'}^\circ}(W^\circ;\CC)\to
H^{2N-1}(W^\circ;\CC)=\CC$ is surjective (hence bijective) since
$W^\circ\minus {X'}^\circ$ is homotopy equivalent to an open subset of
a $(2N-1)$-sphere) and so $H^{2N-1}(W^\circ\minus {X'}^\circ;\CC)=0$.
It follows that
\begin{eqnarray}\label{eqn-*}
H^i(W\minus {X'};\CC)&=&H^i(W^\circ\minus
{X'}^\circ;\CC)=\left\{\begin{array}{ccc}\CC&\text{ if
}&i=0,2(N-n)-1;\\ 0&&\text{else.}\end{array}\right.
  \end{eqnarray}

Next we compute the cohomology of $U_W=W\minus Y', Y'=v(Y)$ where
$Y=\Var(I)$ for some homogeneous ideal $I\subseteq R_n$. Since
$U'=v(U)$ is an embedded submanifold of $U_W$ with complex codimension
$N-n$, we can consider the tubular neighborhood $T'$ of $U'$ that
arises via the tubular neighborhood theorem as the total space of the
normal bundle of $U'$ in $U_W$.
Then
\[
U_W=W\minus Y'=(W\minus X')\cup U'=(W\minus X')\cup T',
\]
with intersection $(W\minus X')\cap T'={T'}^\circ$. 

As $U',U_W$ are complex manifolds, the
removal of the zero section $U'$ from $T'$ leaves a space ${T'}^\circ$
homotopic to an oriented sphere bundle $\SS^q\into {T'}^\circ \onto U'$ where
\[
q=2(N-n)-1.
\]
The $q$-sphere bundle $T^\circ$ yields a Gysin sequence
\[
\ldots\to H^i({T'}^\circ;\CC)\stackrel{\pi_*}{\to}
H^{i-q}(U';\CC)\stackrel{e\wedge}{\to}
H^{i+1}(U';\CC)\stackrel{\pi^*}{\to} H^{i+1}({T'}^\circ;\CC)\to\ldots
  \]
Here, $\pi\colon {T'}^\circ \to U'$ is the fibration map, $\pi^*$ is the
pullback under this map,
and $e$ is the Euler class of the bundle ${T'}^\circ$ when restricted
from relative cohomology to absolute cohomology on $T$. The map
$\pi_*$ is special to the situation of bundles with fibers homotopic
to compact manifolds, and is induced by integration along the fibers
in the following sense. For any oriented $\RR^k$-bundle $E\to B$ with
$E^\circ=E\minus B$ there is a fundamental class $u\in
H^k(E,E^\circ;\ZZ)$ that restricts in each fiber to the canonical
class in $H^k(\RR^k,\RR^k\setminus\{0\};\ZZ)$; this canonical class is the
given orientation on the bundle (an orientation is a global section of
the orientation bundle with fiber
$H^k(\RR^k,\RR^k\setminus\{0\};\ZZ)$).
The existence of the fundamental class is the content of the Thom
isomorphism theorem for oriented vector bundles, and the cup product
with $u$ sets up an isomorphism $u\cup\colon H^j(E;\ZZ)\to
H^{j+k}(E,E^\circ;\ZZ)$. The cap product with the Poincar\'e dual of
$u$ induces an isomorphism $H_j(E,E^\circ;\ZZ) \to H_{j-k}(E;\ZZ)$,
the ``integration along the fibers'' above (compare
\cite[Ch.~9-12]{Milnor}). The image of $u$ in $H^k(E;\ZZ)$ is the
Euler class (by definition). If the fiber dimension $k$ is large,
$H^k(E;\ZZ)=H^{k-1}(E;\ZZ)=0$. In that case,
the Euler class of the bundle must be zero and then
$u$ corresponds to the class in
$H^{k-1}(E^\circ;\ZZ)$ with the property that it restricts in each
fiber to the canonical generator of $H^{k-1}(\RR^k\minus\{0\};\ZZ)$.

Our Gysin sequence above arises from the long exact sequence to
the pair $(T',{T'}^\circ)$ with replacements coming from the Thom
isomorphism and the fact that $U',T'$ are homotopic.

Since $\dim_\CC(U')=n$, $H^i(U';\CC)=0$ if $i\geq 2n$. On the other
hand, the Euler class is of homological degree $q+1=2(N-n)$.  Thus, if
$2(N-n)\geq 2n$ then either the source or the target of the Euler map
$H^{i-q}(U';\CC)\stackrel{e\wedge}{\to} H^{i+1}(U';\CC)$ is zero for
every $i$. But $N={n+d-1\choose n-1}\geq 2n$ for $n,d\geq 2$ unless
$d=n=2$, and usually much larger. Thus the Gysin sequence splits into
isomorphisms
\begin{align}\label{eqn-HTcirc-1}
H^i({T'}^\circ;\CC)&\stackrel{\pi_*}{\to}H^{i-q}(U';\CC)=H^{i}(T';\CC)&
\text{ if }i\geq 2n;\\ \label{eqn-HTcirc-2}
H^{i}(T';\CC)=H^i(U';\CC)&\stackrel{\pi^*}{\to}H^i({T';\CC}^\circ)&
\text{ if }i<2n.
\end{align}

Note that the composition $H^i(T';\CC)\to H^i({T'}^\circ;\CC)\to
H^i(U';\CC)$ is an isomorphism since $U'\into T'$ is a homotopy
equivalence; so the left map is an isomorphism if and only if the
right one is. Now consider the Mayer--Vietoris sequence to the pair
$(W\minus X')\cup T'=U_W$ with ${T'}^\circ=(W\minus X')\cap T'$:
\[
\cdots \to H^i((W\minus X')\cup T';\CC)\to H^i(W\minus X';\CC)\oplus
H^i(T';\CC)\to H^i({T'}^\circ;\CC)\to\cdots
\]
Here, each (component of a) map is the natural restriction, possibly
with a $(-1)$ factor.

If $i< 2n$, the map $H^i(T';\CC)\to H^i({T'}^\circ;\CC)$ in the
Mayer--Vietoris sequence is therefore the identity by
\eqref{eqn-HTcirc-2}. It follows that in this range, $H^i((W\minus
X')\cup T';\CC)\to H^i(W\minus X';\CC)$ is an isomorphism as well. But in that
range, by \eqref{eqn-*}, only $H^0(W\minus X';\CC)$ is nonzero and so
$H^i((W\minus X') \cup T';\CC)$ is zero for $0<i<2n$.

If $2n-1\le i<q$, then $H^i((W\minus X') \cup T';\CC)$ vanishes
since $H^i(W\minus X';\CC)=H^i(T';\CC)=H^i({T'}^\circ;\CC)=0$.

Let us look at the situation when 
$i=q$:
\begin{align*}
  \underbrace{H^{q-1}({T'}^\circ;\CC)}_{=H^{-1}(U';\CC)=0}\to
  H^{q}((W\minus X')\cup T';\CC)\to
  \underbrace{H^{q}(W\minus X';\CC)}_{=\CC}\oplus
  \underbrace{H^{q}(T';\CC)}_{=0}\to\qquad\\\\
  \underbrace{H^{q}({T'}^\circ;\CC)}_{=H^0(U';\CC)=\CC}\to
  H^{q+1}((W\minus X')\cup T';\CC)\to
  {\underbrace{H^{q+1}(W\minus X';\CC)}_{=0}\oplus
  \underbrace{H^{q+1}(T';\CC)}_{=0}}.
\end{align*}
If one restricts the morphism $H^q(W\minus X';\CC)\to H^q({T'}^\circ;\CC)$ to
the intersection with a small ball around a generic point of $Y'$,
both spaces become homotopic to $\SS^q$ and so the morphism
$H^q(W\minus X';\CC)\to H^q({T'}^\circ;\CC)$ restricts to an isomorphism
$\CC\to \CC$. But since $H^q(W\minus X';\CC) $ and $H^q({T'}^\circ;\CC)$ are
also equal to $\CC$, the morphism $H^q(W\minus X';\CC)\to H^q({T'}^\circ;\CC)$
is an isomorphism.  Thus, $H^{q}(U_W;\CC)=H^{q+1}(U_W;\CC)=0$.

If $i>q$, $H^i(T';\CC)=H^i(W\minus X';\CC)=0$. Thus,
$H^{i-q}(U';\CC)=H^i({T'}^\circ;\CC)=H^{i+1}((W\minus X')\cup
T';\CC)$.

We have proved
\begin{prop}\label{prop-complement-veronese}
  We use notation as defined at the start of Subsection
  \ref{subsec-veronese-complement}. Let $T'\to U'$ be the normal bundle of $U'$ in $W^\circ$.  With
  $U_W:=W\setminus Y'=(W\minus X')\cup T'$, and $q={n+d-1\choose
    n-1}>2n$ we have on the level of reduced cohomology for every
  $i\in\ZZ$ the isomorphisms
  \[
  \xymatrix{
    H^i(U';\CC) \ar[r]^{{\pi^*}}_\simeq &
    H^i(T';\CC) \ar[dr]_{e\cup}^\simeq
    \ar[r]^{e_0\cup}_\simeq& H^{i+q}({T'}^\circ;\CC) \ar[d]^{\delta^*}_\simeq \ar[r]^{\delta^*}_\simeq&
    H^{i+q+1}(U_W;\CC)
    \\
    &&H^{i+q+1}(T',{T'}^\circ;\CC)       }
  \]
  Here $e\in H^{q+1}(T',{T'}^\circ;\ZZ)$ is the Euler class of the
  bundle, $e_0\in H^q({T'}^\circ;\ZZ)$ is its preimage, the vertical
  $\delta^*$ is the connecting morphism for the pair
  $(T',{T'}^\circ)$, and the horizontal $\delta^*$ is the connecting
  morphism for the Mayer--Vietoris spectral sequence for the cover
  $U_W=(W\minus X')\cup T'$.

  In particular, the singular reduced cohomology groups of the
  complements of the cones $Y$ and $Y'$ over $\tilde Y$ are the same
  up to a cohomological shift by $q+1=2(N-n)$.
\end{prop}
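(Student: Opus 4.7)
My plan is to decompose the complement $U_W$ using the tubular neighborhood of $U'$ in $W^\circ$ and combine the Mayer--Vietoris sequence for the cover $U_W = (W\minus X') \cup T'$ with the Gysin sequence of the oriented sphere bundle ${T'}^\circ \to U'$ of fiber dimension $q = 2(N-n)-1$. The inputs already available are the cohomology of $W\minus X'$ computed in \eqref{eqn-*}, which is concentrated in degrees $0$ and $2N-1$; the homotopy equivalence $U' \simeq T'$ given by the zero section; and the Thom isomorphism identifying $H^{i+q+1}(T',{T'}^\circ;\CC)$ with $H^i(U';\CC)$ by cup product with the Thom class.

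The key observation is that $\dim_\CC U' = n$ while the Euler class of the normal bundle lives in degree $q+1 = 2(N-n)$; since $N \geq 2n$ for all relevant $(n,d)$, the Euler cup has either vanishing source or vanishing target in every degree. The Gysin sequence thereby degenerates into the isomorphisms $\pi^*\colon H^i(U';\CC) \xrightarrow{\simeq} H^i({T'}^\circ;\CC)$ for $i < 2n$ and $\pi_*\colon H^i({T'}^\circ;\CC) \xrightarrow{\simeq} H^{i-q}(U';\CC)$ for $i \geq q$, as recorded in \eqref{eqn-HTcirc-1} and \eqref{eqn-HTcirc-2}. Having these in hand, I would proceed by case analysis in the Mayer--Vietoris sequence: for $0 < i < 2n$ the restriction $H^i(T';\CC) \to H^i({T'}^\circ;\CC)$ is an isomorphism and $H^i(W\minus X';\CC) = 0$, forcing $H^i(U_W;\CC) = 0$; for $2n-1 \leq i < q$ all three Mayer--Vietoris terms vanish; and for $i > q$ the outer terms vanish, so the connecting morphism $\delta^*\colon H^i({T'}^\circ;\CC) \xrightarrow{\simeq} H^{i+1}(U_W;\CC)$ is an isomorphism, which combined with the Gysin isomorphism yields $H^{i-q}(U';\CC) \cong H^{i+1}(U_W;\CC)$.

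The delicate case is $i=q$, where $H^q(W\minus X';\CC) = H^q({T'}^\circ;\CC) = \CC$; I would check by restricting the situation to a small transverse disk at a smooth point of $Y'$ (where both sides become the top cohomology of $\SS^q$) that the restriction map is an isomorphism, forcing the vanishing of $H^q(U_W;\CC)$ and $H^{q+1}(U_W;\CC)$. The main obstacle I anticipate is the bookkeeping in the final diagram: the displayed composite factors through three distinct isomorphisms — the zero-section pullback $\pi^*$, a cup product with the Thom/Euler class, and the Mayer--Vietoris connecting morphism — and one must verify their compatibility with the long exact sequence of the pair $(T',{T'}^\circ)$ so that $e_0 \cup$ agrees with the composition of $e\cup$ with the connecting morphism $\delta^*\colon H^{i+q}({T'}^\circ;\CC) \to H^{i+q+1}(T',{T'}^\circ;\CC)$. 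Once this identification is verified and the local computation at $i=q$ is settled, the chain of isomorphisms in the statement follows by splicing together the Gysin and Mayer--Vietoris isomorphisms in the range $i > q$, and the shift by $q+1 = 2(N-n)$ is immediate.
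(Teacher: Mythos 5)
Your proposal follows the paper's own argument essentially verbatim: the same cover $U_W=(W\minus X')\cup T'$, the degeneration of the Gysin sequence because the Euler class lives in degree $2(N-n)\geq 2n$ while $\dim_\CC U'=n$, the Mayer--Vietoris case analysis with the delicate degree $i=q$ settled by restricting to a small ball about a generic point of $Y'$, and the Thom/Euler-class bookkeeping identifying the composite maps in the displayed diagram. One small slip: the computation \eqref{eqn-*} places the nonzero cohomology of $W\minus X'$ in degrees $0$ and $2(N-n)-1=q$, not $0$ and $2N-1$; since your case analysis (in particular at $i=q$ and for $i>q$) uses the correct values, this is a typo rather than a gap.
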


\begin{cor}
  If $Y\subseteq X=\CC^n$ is homogeneous and of equi-dimension three,
  then the \v Cech--de Rham numbers $\rho_{p,q}^r(Y)$ are invariant
  under Veronese maps of $Y$.
\end{cor}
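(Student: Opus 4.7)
First, I would observe that the Veronese image $Y':=v_n^d(Y)\subseteq W=\CC^N$ is again homogeneous and equi-dimensional of dimension three, so Proposition \ref{prop-cdR-dim3} yields $E_2$-degeneration of the \v Cech--de Rham spectral sequence for both $Y$ and $Y'$. Hence $\rho^r_{p,q}=\rho^2_{p,q}$ for all $r\geq 2$ on either side, and it suffices to prove $\rho^2_{p,q}(Y)=\rho^2_{p,q}(Y')$. Degeneration further gives $\dim_\CC \tilde H^{k-1}(U;\CC)=\sum_{p+q=2n-k}\rho^2_{p,q}(Y)$ and similarly for $Y'$ and $U_W:=W\minus Y'$; combined with the isomorphism $\tilde H^i(U;\CC)\cong\tilde H^{i+2(N-n)}(U_W;\CC)$ of Proposition \ref{prop-complement-veronese}, this yields
\[
\sum_{p+q=c}\rho^2_{p,q}(Y)=\sum_{p+q=c}\rho^2_{p,q}(Y')\qquad\text{for every }c\in\ZZ.
\]

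Next I would invoke the vanishings $\rho^2_{0,0}=\rho^2_{1,1}=\rho^2_{2,2}=0$, which (assuming $\tilde Y$ connected) follow from the Second Vanishing Theorem of Remark \ref{rmk-L-argument}(5) for the first entry and from Lemma \ref{lem-rw} together with the support-dimension bound of Remark \ref{rmk-L-argument}(2) for the other two, exactly as in the proof of Proposition \ref{prop-cdR-dim3}. Combined with the triangular region of Proposition \ref{prop-rho-diag}, these vanishings leave at most one possibly nonzero entry on each antidiagonal except $p+q=3$; outside this antidiagonal, $\rho^2_{p,q}(Y)$ is therefore determined by its antidiagonal sum, and $\rho^2_{p,q}(Y)=\rho^2_{p,q}(Y')$ is immediate.

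The main obstacle is to separate $\rho^2_{0,3}$ from $\rho^2_{1,2}$ on the antidiagonal $p+q=3$. My plan is to identify the filtration on $\tilde H^{2n-4}(U;\CC)$ induced by the degenerate \v Cech--de Rham spectral sequence with Deligne's Hodge filtration on the mixed Hodge structure of $\tilde H^\bullet(U;\CC)$, so that $\rho^2_{0,3}(Y)$ and $\rho^2_{1,2}(Y)$ appear as consecutive Hodge graded pieces, and then to upgrade the isomorphism of Proposition \ref{prop-complement-veronese} to a morphism of mixed Hodge structures of Hodge type $(N-n,N-n)$: pullback along the vector-bundle retraction $T'\to U'$ and the connecting morphisms for the pair $(T',{T'}^{\circ})$ and for the Mayer--Vietoris cover of $U_W$ are morphisms of mixed Hodge structures, while cup product with the Euler class of the normal bundle is cup product with an algebraic class of pure Hodge type $(N-n,N-n)$. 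Taking Hodge graded pieces of the induced filtered isomorphism $F^p\tilde H^{2n-4}(U;\CC)\cong F^{p+N-n}\tilde H^{2N-4}(U_W;\CC)$ would then yield $\rho^2_{0,3}(Y)=\rho^2_{0,3}(Y')$ and $\rho^2_{1,2}(Y)=\rho^2_{1,2}(Y')$ separately. The delicate step is to pin down that the \v Cech--de Rham filtration indeed agrees with Deligne's Hodge filtration.
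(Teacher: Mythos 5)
Your reduction is set up correctly and, up to the vanishing statements, it follows the paper's own route: degeneration on both sides via Proposition \ref{prop-cdR-dim3}, equality of the antidiagonal sums via Proposition \ref{prop-complement-veronese}, and the vanishing of $\rho^2_{1,1}$ and $\rho^2_{2,2}$ from equi-dimensionality via Lemma \ref{lem-rw} and Remark \ref{rmk-L-argument}. (The appeal to the Second Vanishing Theorem for $\rho^2_{0,0}$ is both unnecessary and unavailable, since connectedness of $\tilde Y$ is not a hypothesis; $H^n_I(R_n)=0$ already follows from equi-dimensionality, and in any case $\rho^2_{0,0}$ sits alone on its antidiagonal, so nothing is lost.) You are also right that the delicate point is the antidiagonal $p+q=3$, where $\rho^2_{0,3}$ and $\rho^2_{1,2}$ can both be nonzero.

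The step that is supposed to separate these two numbers, however, is only a plan, and it contains a genuine gap that you flag yourself: nothing is offered to show that the filtration induced on $\tilde H^{2n-4}(U;\CC)$ by the \v Cech--de Rham spectral sequence is Deligne's Hodge filtration, and this identification should not be expected. The filtration coming from \eqref{eqn-dR-ss} is the filtration by local-cohomology degree, i.e., under the Riemann--Hilbert correspondence a perverse (Leray-type) filtration attached to $R\Gamma_Y(\calO_X)$ pushed to a point; after degeneration its graded pieces are the $\rho^2_{p,q}$ by construction, whereas the graded pieces of the Hodge filtration of the mixed Hodge structure on $H^\bullet(U;\CC)$ are governed by Hodge types and have no reason to coincide with the $\rho$'s. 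So even granting strictness for morphisms of mixed Hodge structures and the $(N-n,N-n)$-type of the Euler class, you would be comparing graded pieces of the wrong filtration; and if one replaces ``Hodge'' by the correct (perverse) filtration, the compatibility of the Gysin/Mayer--Vietoris comparison of Proposition \ref{prop-complement-veronese} with that filtration is precisely the nontrivial content of the paper's subsequent theorem, which is obtained only for $\rho^2_{k,\ell}$ with $k\geq 2$ and hence does not reach the pair $(\rho^2_{0,3},\rho^2_{1,2})$. The paper's own proof of the corollary never compares filtrations at all: after the abutment comparison it finishes purely by vanishing, showing $\rho^2_{2,2}=0$ by Lemma \ref{lem-rw}, equi-dimensionality and Remark \ref{rmk-L-argument}. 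As it stands, your argument establishes the equality of the sums $\rho^2_{0,3}+\rho^2_{1,2}$ for $Y$ and its Veronese image, and the invariance of all the other entries, but not the invariance of $\rho^2_{0,3}$ and $\rho^2_{1,2}$ individually.
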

\begin{proof}
 With $Y'\subseteq W$ and notation as in Subsection
 \ref{subsec-veronese-complement}, 
    the \v Cech--de Rham spectral sequence
  degenerates for dimensional reasons by Proposition
  \ref{prop-cdR-dim3}.  According to Proposition
  \ref{prop-complement-veronese}, the two complements have the same
  reduced cohomology up to a shift by the relative dimension. Hence, 
  up to that same shift, the two \v Cech--de Rham spectral sequences
  have the same abutment. The degeneration shows that the abutment
  determines the $\rho^2_{p,q}$, except for the numbers
  $\rho^2_{2,2}+\rho^2_{3,0}=\dim H^{n-3}_\dR(U)$. However, $\rho^2_{2,2}$
  is the dimension of $H^{n-2}_\dR(H^{n-2}_I(R))$ and thus equals the socle
  dimension of $H^2_\frakm(\DD H^{n-2}_I(R))$ by Lemma \ref{lem-rw}. But
  equi-dimensionality and Remark \ref{rmk-L-argument} show that
  $H^2_\frakm(\DD H^{n-2}_I(R))=0$.  This settles the case $r=2$. But no
  higher nonzero differentials can exist by degeneration.
\end{proof}

It turns out that, similarly to the corresponding result on Lyubeznik
numbers in \cite{RSW}, the \v Cech--de Rham numbers of level two do
not change under Veronese maps of projective varieties.

\begin{thm}
  Let $\tilde Y$ be a projective variety and suppose $Y$ is a
  cone for $\tilde Y$, embedded into an affine space $X$. Then the
  \v Cech--de Rham numbers $\rho^2_{k,\ell}$ do for $k\geq 2$ not
  depend on $Y$ but are a function of $\tilde Y$ alone.
\end{thm}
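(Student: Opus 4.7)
By Lemma \ref{lem-rw}, since the homogeneity of $I$ makes $H^{n-\ell}_I(R_n)$ Eulerian and therefore strongly $\CC^*$-equivariant, I would first rewrite
\[
\rho^2_{k,\ell}(Y) = \dim_\CC \Hom_{R_n}\bigl(R_n/\frakm,\, H^k_\frakm(\DD H^{n-\ell}_I(R_n))\bigr),
\]
reducing the problem to showing that these ``dualized'' socle dimensions, for $k \geq 2$, depend only on $\tilde Y$.

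The key tool for the reduction is the classical Serre--Grothendieck identification of graded local cohomology: for every graded $R_n$-module $M$ and every $k \geq 2$ one has
\[
H^k_\frakm(M) \cong \bigoplus_{j \in \ZZ} H^{k-1}\bigl(\PP^{n-1},\, \widetilde{M}(j)\bigr),
\]
where $\widetilde M$ is the associated quasi-coherent sheaf and $(j)$ is the Serre twist. Applied to $\calM := \DD H^{n-\ell}_I(R_n)$---which is again Eulerian since holonomic duality commutes with the equivariant structure---this converts $H^k_\frakm(\calM)$ into a sum of sheaf cohomology groups on $\PP^{n-1}$. Because $\calM$ is Eulerian, the Euler operator acts on each graded piece by a fixed scalar, so the socle of $H^k_\frakm(\calM)$ is concentrated in a single prescribed graded degree $j_0$, yielding
\[
\rho^2_{k,\ell}(Y) = \dim_\CC H^{k-1}\bigl(\PP^{n-1},\, \widetilde\calM(j_0)\bigr).
\]

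To conclude, I would invoke the Riemann--Hilbert interpretation already exploited in the proof of Proposition \ref{prop-rho-diag}: the $\calD$-module $\DD H^{n-\ell}_I(R_n)$ corresponds (up to a shift) to the pushforward from $Y$ of a perverse cohomology of the constant sheaf $\CC_Y$. Because this perverse sheaf on $Y$ is homogeneous for the $\CC^*$-action, its descent to $\PP^{n-1}$ via the standard $\CC^*$-quotient is supported on $\tilde Y$ and is the pushforward of an intrinsic perverse sheaf on $\tilde Y$. Hence $H^{k-1}(\PP^{n-1}, \widetilde\calM(j_0))$ computes the hypercohomology of an object intrinsic to $\tilde Y$, independent of the affine embedding.

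The main obstacle is the bookkeeping in the middle step: pinning down the graded degree $j_0$ selected by the socle and checking that the corresponding Serre twist produces a complex on $\tilde Y$ that is genuinely embedding-independent rather than depending on the very ample line bundle $\calO(1)|_{\tilde Y}$. The hypothesis $k \geq 2$ is indispensable here, because the Serre--Grothendieck identification acquires correction terms at $k = 0,1$, and these corrections record the cone vertex and the chosen line bundle---precisely the data that distinguishes one cone over $\tilde Y$ from another.
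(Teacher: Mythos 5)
Your opening reduction is sound and in fact parallels the paper's: Lemma \ref{lem-rw} converts $\rho^2_{k,\ell}$ into the socle dimension of $H^k_\frakm(\DD H^{n-\ell}_I(R_n))$, and the role you assign to $k\geq 2$ (killing the correction terms in the graded local cohomology/sheaf cohomology comparison) is the graded-module shadow of what the paper does with the adjunction triangle for the inclusion of the cone vertex, where the same hypothesis splits the long exact sequence. Up to this point your route and the paper's route are essentially the same argument in two languages.

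The genuine gap is the final step. After passing to the punctured cone (equivalently, to $\Proj$), you assert that $H^{k-1}\bigl(\PP^{n-1},\widetilde{\calM}(j_0)\bigr)$ ``computes the hypercohomology of an object intrinsic to $\tilde Y$, independent of the affine embedding.'' This does not follow from descent. What descends intrinsically to $\tilde Y$ is the perverse sheaf $\calG_{-\ell+1}={}^p\calH^{-\ell+1}\omega_{\tilde Y}$ (equivalently, the restriction of your $\calM$ to the punctured cone, up to the bundle pullback); but the quantity you have reduced to is the cohomology of the \emph{punctured cone} with these coefficients, i.e.\ of $\pi_!\pi^{-1}\calG_{-\ell+1}$ where $\pi\colon Y^\circ\to\tilde Y$ is the $\CC^*$-bundle attached to the very ample line bundle $\calL=\calO(1)|_{\tilde Y}$, and this object depends on $\calL$. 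Selecting the single twist $j_0$ does not remove that dependence: the grading (which graded piece is which) is exactly the record of the polarization. Concretely, the Gysin triangle for the bundle identifies the number in question with
\[
\dim\ker\bigl(e(\calL)\cup\colon \HH^{-k}_c(\tilde Y,\calG_{-\ell+1})\to\HH^{-k+2}_c(\tilde Y,\calG_{-\ell+1})\bigr)
+\dim\coker\bigl(e(\calL)\cup\colon \HH^{-k-1}_c\to\HH^{-k+1}_c\bigr),
\]
so the a priori dependence on the cone sits precisely in the Euler class $e(\calL)\in H^2(\tilde Y;\CC)$, and the invariance one can actually prove at this level of generality is that replacing $\calL$ by $\calL^d$ (a Veronese re-embedding) multiplies $e(\calL)$ by $d\neq 0$ and hence does not change these kernel and cokernel dimensions over $\CC$. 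Your proposal omits this Euler class/Gysin step entirely, and the paragraph where you call the remaining issue ``bookkeeping'' is in fact the mathematical heart of the theorem: without an argument controlling how the answer varies with $\calL$, the claimed embedding-independence is not established. (A secondary, fixable point: you apply the Serre--Grothendieck decomposition to $\calM=\DD H^{n-\ell}_I(R_n)$, which is not finitely generated over $R_n$, so you should justify the comparison for arbitrary graded modules via the punctured spectrum rather than cite the finitely generated case.)
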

The overall plan of the proof is quite similar to the proof in
\cite{RSW} that the $\lambda_{k,\ell}$ are (largely) unchanged under
Veronese maps. We start with translating the $\rho_{k,\ell}$ into
objects of constructible sheaves involving the Verdier dual $\omega_Y$
of the constant sheaf on $Y$. After some rewriting we use Lemma
\ref{lem-rw} 
 to exchange a direct image functor to a point for the
pullback to the origin and then to use an adjunction triangle to
reformulate them in terms of $Y^\circ$. We finally lift to $\tilde Y$ on which
one uses an interpretation in terms of Chern classes.

During the proof we shall use the following diagram of maps
\[
\xymatrix{\{0\} \ar[d]_{i_Y} \ar@{=}[r] & \{0\} \ar[d]^i  \\ Y \ar@/_1.5em/[u]_{a_Y} \ar[r]^h & \CC^{n}=:X \ar@/_1.5em/[u]_{a_X}  \\
Y_0 \ar[u]^{j_Y} \ar[d]_\pi \ar[r]^-{h_0} &\CC^{n} \setminus \{0\}=: X^\circ \ar[u]_j \ar[d]^p& \\
\tilde Y \ar[r]^g & \PP^{n-1} }
\]

\begin{proof}

The \v Cech--deRham numbers of level 2 are given by 
\[
\rho_{k,\ell} :=\dim  H^{n-k}_{\dR} H^{n-\ell}_I R  = \dim {^p}\calH^{-k} a_*({^p}\calH^{n-\ell} h_! h^! \underline{\CC}_{X}[n]).
\]
(The shift from $-n+k$ on the left to $-k$ on the right occurs since
the de Rham functor used in the
Riemann--Hilbert correspondence arises from the ``natural'' algebraic
de Rham functor---which goes along with the tensor product with
$\omega$---by analytification \emph{and} a shift by $n$).

We have
\begin{eqnarray}\label{eqn-TR-1}
  {^p}\calH^{-k} a_*({^p}\calH^{n-\ell} h_! h^! \underline{\CC}_{X}[n])
& \stackrel{({\rm a})}{\simeq}& {^p}\calH^{-k} a_*({^p}\calH^{n-\ell} h_! h^! a_X^! \CC_{\pt}[-n])\notag\\
& \stackrel{({\rm b})}{\simeq}& {^p}\calH^{-k}  a_*({^p}\calH^{n-\ell} h_! \omega_Y[-n]) \notag\\
& \stackrel{({\rm c})}{\simeq}& {^p}\calH^{-k} a_* h_! ({^p}\calH^{n-\ell}  \omega_Y[-n])\notag\\
& \stackrel{({\rm d})}{\simeq}& {^p}\calH^{-k} a_* h_* ({^p}\calH^{n-\ell}  \omega_Y[-n])\\
& {\simeq}& {^p}\calH^{-k}  (a_Y)_* ({^p}\calH^{n-\ell}  \omega_Y[-n])\notag\\
& {\simeq}& {^p}\calH^{-k}  (a_Y)_* ({^p}\calH^{-\ell}  \omega_Y)\notag\\
& \stackrel{({\rm e})}{\simeq}& {^p}\calH^{-k}  (i_Y)^{-1} ({^p}\calH^{-\ell}  \omega_Y)\notag
\end{eqnarray}
The justfications are as follows: (a) holds since the real dimension of $X$
is $2n$; (b) follows from the definition of $\omega_Y$; (c) holds since $h$ is a
closed embedding and hence $h_!$ is perverse exact;
(d) comes from $h_*=h_!$ for closed
embeddings; (e) is Lemma 3.3 in \cite{RW-weight}.

We have the following triangle from the inclusion of the origin into $Y$:
\[
 j_{Y!} j_Y^{-1} ({^p}\calH^{-\ell}  \omega_Y) \to  {^p}\calH^{-\ell}  \omega_Y \to  i_{Y!} i_Y^{-1} ({^p}\calH^{-\ell}  \omega_Y) \overset{+1}\to
\]
and it induces the following long exact sequence
\[
\xymatrix{  &  \ldots \ar[r] & {^p}\calH^{-3} i_{Y!} (i_Y)^{-1} ({^p}\calH^{-\ell}  \omega_Y) \ar[dll] \\ 
{^p}\calH^{-2} j_{Y!} (j_Y)^{-1} ({^p}\calH^{-\ell}\omega_Y) \ar[r] &  0 \ar[r] & {^p}\calH^{-2} i_{Y!} (i_Y)^{-1} ({^p}\calH^{-\ell}  \omega_Y) \ar[dll] \\ 
 {^p}\calH^{-1} j_{Y!} (j_Y)^{-1} ({^p}\calH^{-\ell}\omega_Y) \ar[r] & 0 \ar[r] & {^p}\calH^{-1} i_{Y!} (i_Y)^{-1} ({^p}\calH^{-\ell}  \omega_Y) \ar[dll] \\
 {^p}\calH^0 j_{Y!} (j_Y)^{-1} ({^p}\calH^{-\ell}\omega_Y) \ar[r] & {^p}\calH^{-\ell} \omega_Y \ar[r] & {^p}\calH^0 i_{Y!} (i_Y)^{-1} ({^p}\calH^{-\ell} \omega_Y) \ar[r] & 0}
\]

We now use that $k\geq 2$, which yields the following isomorphisms
from the long exact sequence above:
\begin{eqnarray}\label{eqn-TR-2}
{^p}\calH^{-k} i_{Y!} (i_Y)^{-1} ({^p}\calH^{-\ell}\omega_Y) &\simeq& {^p}\calH^{-k+1} j_{Y!}(j_Y)^{-1} ({^p}\calH^{-\ell} \omega_Y) \notag\\
&\stackrel{({\rm f})}{\simeq}& {^p}\calH^{-k+1} j_{Y!}({^p}\calH^{-\ell} \omega_{Y^\circ}) \notag\\
&\stackrel{({\rm g})}{\simeq}& {^p}\calH^{-k+1} j_{Y!}({^p}\calH^{-\ell} \pi^!
\omega_{\tilde Y})\\
&{\simeq}&  {^p}\calH^{-k+1} j_{Y!}({^p}\calH^{-\ell+1} \pi^![-1]
\omega_{\tilde Y})\notag\\
&\stackrel{({\rm h})}{\simeq}&  {^p}\calH^{-k} j_{Y!}\pi^!({^p}\calH^{-\ell+1}
\omega_{\tilde Y})\notag
\end{eqnarray}
with justifications as follows:
(f) since $j_Y$ is open and so $(j_Y)^{-1}$ is perverse exact; (g) is
dual to the fact that $\CC_{Y^\circ}=\pi^{-1}\CC_{\tilde{Y}}$;
(h) is because $\pi$
is smooth so that $\pi^![-1]$ is perverse exact.

We have then
\begin{align*}
{^p}\calH^i a_{Y!} {^p}\calH^{-k} (j_{Y!}\pi^!({^p}\calH^{-\ell+1}
\omega_{\tilde Y}))  \simeq 
{^p}\calH^{i} a_{Y!}({^p}\calH^{-k} (i_{Y!} (i_Y)^{-1} ({^p}\calH^{-\ell}\omega_Y))) = 0 \qquad \text{for}\quad  i \neq 0
\end{align*}
since ${^p}\calH^{-n+k} i_{Y!} (i_Y)^{-1} ({^p}\calH^{-\ell}\omega_Y)$ is at
most supported on a point. A spectral sequence argument shows
therefore that
\begin{gather}\label{eqn-TR-3}
{^p}\calH^0 a_{Y!} {^p}\calH^{-k} j_{Y!}\pi^!({^p}\calH^{-\ell+1}  \omega_{\tilde{Y}})  \simeq {^p}\calH^{-k} a_{Y!} j_{Y!}\pi^!({^p}\calH^{-\ell+1}  \omega_{\tilde{Y}}).
\end{gather}
Summarizing we have for $k\geq 2$ that
\begin{eqnarray}\label{eqn-TR-4}
{^p}\calH^{-k} a_* ({^p}\calH^{n-\ell} h_!h^! \underline{\CC}_{V}[n])
&\stackrel{({\rm i})}{\simeq}& {^p}\calH^{-k}  (i_Y)^{-1} ({^p}\calH^{-\ell}  \omega_Y)\notag\\
&\stackrel{({\rm j})}{\simeq}& {^p}\calH^0 a_{Y!} i_{Y!} ({^p}\calH^{-k}  (i_Y)^{-1} ({^p}\calH^{-\ell}  \omega_Y))\\
&\stackrel{({\rm k})}{\simeq}& {^p}\calH^0 a_{Y!} ({^p}\calH^{-k}  i_{Y!} (i_Y)^{-1} ({^p}\calH^{-\ell}  \omega_Y))\notag\\
&\stackrel{({\rm l})}{\simeq}& {^p}\calH^{-k} a_{Y!} j_{Y!}\pi^!({^p}\calH^{-\ell+1}  \omega_{\tilde{Y}}),
\end{eqnarray}
where (i) follows from display \eqref{eqn-TR-1}; (j) follows since
$a_Y\circ i_Y$ is the identity on $\{0\}$; (k) is
since $i_{Y!}$ is perverse exact as $i_Y$ is closed; (l) comes from
displays 
\eqref{eqn-TR-2} and \eqref{eqn-TR-3}. 


Now let $\calL$ be the quasi-coherent pullback of
$\calO_{\PP^{n-1}}(1)$ via $g$. By abuse of notation we denote the
total space of the corresponding line bundle  by the same letter. Notice that $Y^\circ \simeq \calL \minus \{\text{zero section}\}$. Consider the following diagram
\[
\xymatrix{\{0\} \ar[d]_{i_Y} &  \tilde Y \ar[l] \ar[d]^{\tilde{i}} \ar@{=}[dr] & \\
Y & \calL \ar[l]_u \ar[r]^q & \tilde Y\\
Y^\circ \ar[u]^{j_Y} \ar@{=}[r] & Y^\circ \ar[u]_{\tilde{j}} \ar[ur]_\pi &}
\]
in which $q$ is the bundle map, $\tilde i$ the embedding of the
zero section, and $u$ is the contraction of the zero section.
We have
\begin{align*}
{^p}\calH^{-k} a_{Y!} j_{Y!}\pi^!({^p}\calH^{-\ell+1}  \omega_{\tilde
  Y})&\simeq {^p}\calH^{-k} a_{Y!} j_{Y!
}\pi^{-1}[2]({^p}\calH^{-\ell+1}  \omega_{\tilde Y})\\ 
&\simeq{^p}\calH^{-k+2} a_{Y!} j_{Y!	}\pi^{-1}({^p}\calH^{-\ell+1}
\omega_{\tilde Y})\\
&\simeq {^p}\calH^{-k+2} a_{Y!}
j_{Y!}(\widetilde{j})^{-1}q^{-1}({^p}\calH^{-\ell+1}  \omega_{\tilde Y}) \\
&\simeq {^p}\calH^{-k+2} a_{X!} q_{!}\ \widetilde{j}_!
(\widetilde{j})^{-1}q^{-1}({^p}\calH^{-\ell+1}  \omega_{\tilde Y})\\
&\simeq {^p}\calH^{-k+2} a_{X!} \pi_! \pi^{-1}({^p}\calH^{-\ell+1}
\omega_{\tilde Y})\\
&\simeq\HH^{-k+2}_c(X,\pi_! \pi^{-1}({^p}\calH^{-\ell+1}  \omega_{\tilde
Y}))
\end{align*}
Here the first isomorphism comes from the fact that
$\pi^{-1}[1]=\pi^![-1]$ is perverse exact, and the last one because
cohomology of compact supports is the cohomology of the exceptional
direct image functor.

From the closed embedding of  $\tilde Y$ into $\calL$ arises a triangle
\begin{equation}
 \widetilde{j}_! (\widetilde{j})^{-1}q^{-1}({^p}\calH^{-\ell+1}  \omega_{\tilde{Y}}) \to q^{-1}({^p}\calH^{-\ell+1}  \omega_{\tilde{Y}}) \to  \tilde{i}_! (\tilde{i})^{-1} q^{-1}({^p}\calH^{-\ell+1} \omega_{\tilde{Y}}) \overset{+1}\to
\end{equation}
Applying $q_!$ we get
\[
\pi_! \pi^{-1} \calG_{-\ell+1} \to q_!q^{-1} \calG_{-\ell+1} \to \calG_{-\ell+1}  \overset{+1}\to
\]
where we have set $\calG_{-\ell} := {^p}\calH^{-\ell} \omega_{\tilde{Y}}$ and used
$\pi=q\circ\tilde j$.
We have $\calG \simeq q_! q^!\calG \simeq q_!q^{-1}\calG[2]$ for any
$\calG \in \textup{Perv}(\tilde{Y})$ since $q$ is smooth of relative dimension
$1$. This gives the triangle
\[
\pi_! \pi^{-1} \calG_{-\ell+1}  \to \calG_{-\ell+1}[-2] \to \calG_{-\ell+1}  \overset{+1}\to
\]
As in  \cite[(1.3.1)]{RSW}, this triangle is dual to a triangle $\calF \to \calF[2] \to p_*\pi^!\calF \overset{+1}\to$ where the first map
is induced by
\[
e \otimes 1 : \CC_{\tilde{Y}} \otimes \calF \to \CC_{\tilde{Y}}[2] \otimes \calF,
\]
with $e \in \Hom_{D^b_\cs({\tilde{Y}})}(\CC_{\tilde{Y}},\CC_{\tilde{Y}}[2]) \simeq
\Hom_{D^b_\cs(\pt)}(\CC,R\Gamma({\tilde{Y}};\CC_{\tilde{Y}}[2])) \simeq H^2({\tilde{Y}};\CC)$ is the image
of the Euler class of the vector bundle $\calL$.

We get a long exact sequence
\[
\to \HH^{-k+2}_c({\tilde{Y}},\pi_! \pi^{-1} \calG_{-\ell+1}) \to  \HH^{-k}_c({\tilde{Y}},\calG_{-\ell+1})(-1) \to \HH^{-k+2}_c({\tilde{Y}},\calG_{-\ell+1}) \to \HH^{-k+3}_c({\tilde{Y}},\pi_! \pi^{-1} \calG_{-\ell+1}) \to
\]

In particular we get short exact sequences
\[
0\to \HH^{-k+1}_c({\tilde{Y}},\calG_{-\ell+1})_\calL \to  \HH^{-k+2}_c({\tilde{Y}},\pi_! \pi^{-1} ({^p}\calH^{-\ell+1} \omega_{\tilde{Y}})) \to \HH^{-k}_c({\tilde{Y}},\calG_{-\ell+1})^\calL \to 0
\]
where
\begin{eqnarray*}
\HH^{-k+1}_c({\tilde{Y}},\calG_{-\ell+1})_\calL &:=& \textup{coker}\left( \HH^{-k-1}_c({\tilde{Y}},\calG_{-\ell+1})(-1) \to \HH^{-k+1}_c({\tilde{Y}},\calG_{-\ell+1})\right)\\
\HH^{-k}_c({\tilde{Y}},\calG_{-\ell+1})^\calL &:=& \ker\left(\HH^{-k}_c({\tilde{Y}},\calG_{-\ell+1})(-1) \to \HH^{-k+2}_c({\tilde{Y}},\calG_{-\ell+1})\right)
\end{eqnarray*}
  
Putting everything together we get that
\[
\rho_{k,\ell} = \dim \HH^{-k+2}_c({\tilde{Y}},\pi_! \pi^{-1}({^p}\calH^{-\ell+1}
\omega_{\tilde{Y}})) = \dim \HH^{-k+1}_c({\tilde{Y}},\calG_{-\ell+1})_\calL + \dim
\HH^{-k}_c({\tilde{Y}},\calG_{-\ell+1})^\calL
\]
is unchanged under Veronese maps for $k \geq 2$, since the Euler class of
a bundle power is a multiple of the original Euler class (and so
over $\CC$  kernels and cokernels are preserved).

\end{proof}

\section{Lyubeznik numbers}

In this section we study the Lyubeznik numbers and their spectral
sequence. After surveying some known facts we discuss to what extent a
projective variety determines the Lyubeznik numbers of its cone(s). We
look first specifically at varieties of Picard number 1, listing some
examples and open questions. After that we discuss cases where in
small dimension the Lyubeznik tables of all cones agree.

\subsection{Basic properties}

We should begin with drawing some parallels to the case of the \v
Cech--de Rham numbers. Quite immediately, being defined as the socle
dimensions  of
the $E_2$-terms in the Grothendieck spectral sequence
\eqref{eqn-lc-ss}, the Lyubeznik numbers vanish for
$q\not\in [\codim(I),n]$ and for $p\not\in [0,n]$. In fact, 
similarly to the $\rho^r_{p,q}$, Lyubeznik numbers fit into a
triangular region
\[
\Lambda(Y):=\begin{pmatrix}\lambda_{0,0}&\cdots&\cdots&\lambda_{0,d}\\
  0&\ddots&&\vdots\\
  \vdots&\ddots&\ddots&\vdots\\
  0&\cdots&0&\lambda_{d,d}\end{pmatrix}.
\]
In this
picture, the differentials of the Grothendieck spectral sequence point
South to Southeast.

\begin{rmk}
  \label{two zero terms}
  The fact that the abutment is $H^n_\frakm(R)$ implies that the
  entries
\[
\lambda_{0,d}=\lambda_{1,d}=0
\]
always vanish unless the dimension of $I$ is less than two, in which
cases the Lyubeznik tables are $(1)$ and
$\begin{pmatrix}\cdot&\cdot\\\cdot&1\end{pmatrix}$ respectively.
\schluss
\end{rmk}
  
The number $\lambda_{d,d}$ is never zero by \cite{L-Dmods} and related
to connectedness issues. For example,  
if $\dim(Y)=2$ then $\Lambda=\begin{pmatrix}
\cdot&a-1& \cdot\\
\cdot &\cdot &\cdot\\
\cdot &\cdot&a\end{pmatrix}$ where $a$ is
the number of connected components of the punctured spectrum of the ring
defining the purely 2-dimensional part of $I$,
\cite{W-lambda,Kawasaki}. By \cite{Zhang-Compositio}, $\lambda_{d,d}$
is the number of connected components of the Hochster--Huneke graph of
the completed strict Henselization of $R/I$.

It was first observed in \cite{GarciaLopez-Sabbah} that the Lyubeznik
numbers encode interesting topological information also in higher
dimension. However, it is often not easy to decode this
information. Garcia and Sabbah concentrate on the case of an isolated
singularity and find that the topology of the singularity link carries all
information on $\Lambda$. Other relations to connectedness
dimensions are discussed in the survey \cite{BWZ-survey}.

A new angle was introduced in \cite{RSW} by applying the theory of
perverse sheaves and mixed Hodge modules to the problem. Indeed, over
$\CC$, the local cohomology groups $H^\bullet_I(R)$ are,  as
the pushforward of the structure sheaf on $U(I)$ to $\CC^n$, all
equipped with a natural mixed Hodge module structure. The corresponding
perverse sheaves carry information on the intersection homology of
$U(I)$. Using this connection it is proved in \cite{RSW} that if $I$ is
homogeneous then the $\lambda_{p,q}(R/I)$ can be recovered from the
kernel and cokernel of certain maps of sheaves on the corresponding
projective scheme, at least as long as $p>1$.

\subsection{Lyubeznik numbers and projective schemes}

Suppose $\tilde Y$ is a projective variety in $\PP^{n-1}_\KK$, with
defining ideal $I\subseteq R_n=\KK[x_1,\ldots,x_n]$. Different
embeddings of $\tilde Y$ give rise to different ideals in different
polynomial rings, and thus potentially to different sets of Lyubeznik
numbers.  That this is indeed a possibility was shown to be the case
in \cite[Sections 2.2, 2.3]{RSW} where a projective variety with two
embeddings is constructed that produce (partially) different
$\lambda_{p,q}$; see also \cite{Botong-PAMS}.
On the other hand, if $\tilde Y$ is smooth or an
$\QQ$-homology manifold or analytically locally a set-theoretic
complete intersection, then all cones for $\tilde Y$ yield the same
Lyubeznik numbers \cite{GarciaLopez-Sabbah,Switala-proj-lambda,RSW}.

That examples of cones over $\tilde Y$ with varying Lyubeznik numbers
exist over $\CC$ is rather surprising at first, since similar examples
cannot exist in any positive characteristic. Indeed, it is shown in
\cite{Zhang-Advances} that Lyubeznik numbers in finite characteristic
can be seen as eigenvalues of certain operators on sheaves that are
intrinsic to the projective variety associated to $I$.

An interesting feature of \cite{RSW} is the realization that the
non-vanishing Lyubeznik numbers of a homogeneous ideal can, for $p>1$,
be viewed as a measure for the failure of a certain morphism of
perverse sheaves to be an isomorphism. All known examples of
projective varieties with possibly varying Lyubeznik numbers of their
cones 
come from varieties with Picard number at least two. An
application of \cite[Prop.~3]{RSW} is that most Lyubeznik numbers are
unchanged under Veronese maps.

\begin{lem}\label{lem-consOfPic}
  Let $Y$ be a cone over the projective variety $\tilde Y\subseteq
  \PP^{n-1}_\CC$. Let $v_n^d$ be the $d$-th Veronese applied to the
  cone $Y$, and write $Y'=v_n^d(Y)$ for the new cone. Then for $p\geq 2$, the
  Lyubeznik numbers $\lambda_{p,q}(Y)$ and $\lambda_{p,q}(Y')$ agree.

  In particular, if the Picard number $\tilde Y$ equals
  one, then the Lyubeznik numbers $\lambda_{\ge 2,q}(Y)$ to cones over
  $\tilde Y$ are independent of the cone.
  \end{lem}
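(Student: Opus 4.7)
My plan is to deduce both statements from \cite[Prop.~3]{RSW}, which expresses the Lyubeznik numbers $\lambda_{p,q}(Y)$ with $p\geq 2$ as dimensions of kernels and cokernels of certain morphisms of perverse sheaves (or mixed Hodge modules) on $\tilde Y$; the crucial input in these morphisms is cup product with the Euler class $e(\calL)\in H^2(\tilde Y;\CC)$ of the ample line bundle $\calL$ on $\tilde Y$ whose associated cone is $Y$. Once this description is granted, both claims reduce to the elementary fact that kernels and cokernels of a $\CC$-linear map are preserved under multiplication of that map by a nonzero scalar.

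For the first assertion, the Veronese map $v_n^d$ takes the cone attached to $\calL$ to the cone attached to $\calL^{\otimes d}$. Since $e(\calL^{\otimes d})=d\cdot e(\calL)$ and $d\neq 0$, cup product with $e(\calL)$ and cup product with $e(\calL^{\otimes d})$ have identical kernels and cokernels, so the RSW formula returns the same numbers $\lambda_{p,q}$ for $p\geq 2$. Here I use implicitly that the Lyubeznik numbers are invariants of the ring $R/I$ alone and do not depend on the ambient presentation, which justifies comparing them across the change of ambient affine space from $\AA^n$ to $\AA^N$ induced by $v_n^d$.

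For the second assertion, choose two cones $Y_1,Y_2$ over $\tilde Y$, coming from ample line bundles $\calL_1,\calL_2$. The Picard number one hypothesis means that the rational N\'eron--Severi group $\mathrm{NS}(\tilde Y)\otimes\QQ$ is one-dimensional, so $e(\calL_1)$ and $e(\calL_2)$ are nonzero rational multiples of a common generator inside $H^2(\tilde Y;\CC)$ (positive multiples, by ampleness). The same scalar-invariance argument as before then shows that cup product with the two Euler classes shares kernels and cokernels, and \cite[Prop.~3]{RSW} yields $\lambda_{p,q}(Y_1)=\lambda_{p,q}(Y_2)$ for every $p\geq 2$.

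The main obstacle is really just invoking \cite[Prop.~3]{RSW} in the precise form that identifies the relevant morphism as cup product with the Euler class of $\calL$ (up to sign or normalization); once that black box is extracted, each remaining step is linear algebra together with elementary properties of first Chern classes.
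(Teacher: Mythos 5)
Your proposal is correct and rests on exactly the same engine as the paper's proof: the description from \cite[Prop.~1,2,3]{RSW} of $\lambda_{p,q}$ for $p\geq 2$ as sizes of kernels and cokernels of cup product with the first Chern class of $\calL$ on cohomology groups of $\tilde Y$ that do not depend on $\calL$, combined with the observation that kernels and cokernels of a $\QQ$- or $\CC$-linear map are unchanged under nonzero scaling. Your treatment of the Veronese statement is identical to the paper's ($c_1(\calL^{\otimes d})=d\,c_1(\calL)$, $d\neq 0$). For the Picard number one statement the routes diverge slightly: the paper reduces it to the first part by finding common powers, arguing via $\phi\colon\Pic(\tilde Y)\to\Pic(\tilde Y)\otimes_\ZZ\QQ$ that $\calL_1^{k q_2}=\calL_2^{k q_1}$ for suitable $k$, so that the two embeddings become Veronese twists of a common one; you instead apply the scaling argument directly, noting that ampleness makes $c_1(\calL_1)$ and $c_1(\calL_2)$ nonzero positive rational multiples of each other in $H^2(\tilde Y;\QQ)$. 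Your version is marginally more direct and sidesteps the equality of line bundle powers in $\Pic$ (which strictly holds only up to torsion, a point that is harmless but glossed over in the paper), at the cost of leaning a bit harder on the precise form of the RSW formula as cup product with the Chern class in rational cohomology --- which is indeed how the paper itself uses it.
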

\begin{proof}
  Let $\iota_1,\iota_2$ be two embeddings of $\tilde Y$ into
  projective spaces $\PP^{n-1}_\KK, \PP^{m-1}_\KK$ and denote
  $Y_1\subseteq X_1,Y_2\subseteq X_2$ the two cones over $\tilde Y$,
  sitting in the respective affine spaces that belong to the two
  embeddings. Let $\calL_1,\calL_2$ be the associated line bundles on
  $\tilde Y$ obtained as pullbacks of $\calO_{\PP^{n-1}_\KK}(1)$ and
  $\calO_{\PP^{m-1}_\KK}(1)$ respectively. Then by
  \cite[Prop.~1,2,3]{RSW}, the Lyubeznik numbers $\lambda_{p,q}$ of
  $\tilde Y$ that belong to $Y_i$ and have $p\geq 2$ are determined by
  the (co)kernel sizes of the Chern classes of $\calL_i$ on certain
  cohomology groups of $\tilde Y$ with rational coefficients. These
  cohomology groups themselves (see \cite[Prop.~2]{RSW}) do not depend
  on the bundles $\calL_i$.

  If $\iota_2$ is the $d$-fold Veronese applied to $\iota_1$ then the
  (first) Chern class of $\calL_2$ is $d$ times that of $L_1$. In
  particular, their kernels and cokernels on $\QQ$-spaces are
  identical and the first claim follows.

  Now suppose that the target of the natural map
  \[
  \phi\colon \Pic(\tilde Y)\to \Pic(\tilde Y)\otimes_\ZZ \QQ
  \]
  is $\QQ$.  If $\iota_1,\iota_2$ are both projective embeddings of
  $\tilde Y$, ampleness implies that $q(\calL_i)>0$.
  Then if $\phi(\calL_1)=q_1$ and $\phi(\calL_2)=q_2$, both
  positive rational numbers, we have for $k\gg 0$ with $kq_1,kq_2\in\NN$ that
  $\calL_1^{k|q_2|}=\calL_2^{k|q_1|}$.
  Then
  by the first part of the proof,
  $\iota_1$, $\iota_1^{kq_2}$, $\iota_2^{kq_1}$ and $\iota_2$ all
  yield the same Lyubeznik numbers $\lambda_{p,q}$ for $p\geq 2$
  (where we write $\iota^\ell$ for the $\ell$-th Veronese of the
  embedding $\iota$).
\end{proof}

For $p<2$, we do not know how to compare the $\lambda_{p,q}$ of different cones. 
\begin{prb}
  Is it true that if the Picard number of $\tilde Y$ is one, then all
  Lyubeznik numbers of all cones $Y$ of $\tilde Y$ agree?\schluss
  \end{prb}

Here are three interesting sets of varieties
to which this lemma applies.

\subsubsection{Determinantal ideals}

\begin{prop}
  The Lyubeznik numbers $\lambda_{p,q}$ with $p\geq 2$ of (the cones
  over) the projective determinantal varieties $\tilde Y_{m,n,t}$ cut out by
  the $t\times t$ minors of an $m\times n$ matrix of indeterminates
  are unique.
\end{prop}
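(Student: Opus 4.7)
The plan is to apply Lemma~\ref{lem-consOfPic}, so the entire proof reduces to showing that the projective determinantal variety $\tilde Y_{m,n,t}$ has Picard number one.

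To establish the Picard-rank-one property, I would invoke the classical theory of determinantal rings. The affine determinantal ring $R_n/I_{m,n,t}$ is a normal Cohen--Macaulay graded domain (Hochster--Eagon), and by Bruns--Vetter its divisor class group is cyclic: $\Cl(R_n/I_{m,n,t})\cong\ZZ$, generated by the class of an ideal of maximal minors of a suitable submatrix. Moreover, the singular locus of $\tilde Y_{m,n,t}$ is $\tilde Y_{m,n,t-1}$, of codimension $m+n-2t+3\geq 3$ inside $\tilde Y_{m,n,t}$ (since $t\leq\min(m,n)$), so the projective variety is regular in codimension two. I would then exploit the standard exact sequence relating the Picard group of $\tilde Y_{m,n,t}$ to the class group of the affine cone $Y_{m,n,t}$:
\[
\ZZ \xrightarrow{\cdot[\calO(1)]} \Pic(\tilde Y_{m,n,t}) \to \Cl(Y_{m,n,t}) \to 0,
\]
where the left arrow sends $1$ to the restriction of the hyperplane class. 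Since $\Cl(Y_{m,n,t})\otimes_\ZZ\QQ$ has rank at most one and the hyperplane class is ample (hence nontorsion), $\Pic(\tilde Y_{m,n,t})\otimes_\ZZ\QQ$ has rank exactly one, which is precisely the hypothesis of Lemma~\ref{lem-consOfPic}.

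A more geometric alternative would be to use the Springer-type resolution $\pi\colon Z_{m,n,t}\to \tilde Y_{m,n,t}$, where $Z_{m,n,t}$ is a vector bundle over the Grassmannian $\textup{Gr}(t-1,m)$ parametrizing pairs (matrix, subspace containing its image). The Picard group of the resolution is $\ZZ^2$ (one generator from the Grassmannian, one from the fiber), and the projection formula together with the description of the exceptional locus of $\pi$ pins down which rational classes descend to $\tilde Y_{m,n,t}$, again yielding Picard rank one.

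The main obstacle will be a careful Picard-group bookkeeping: one must verify that the map $\ZZ\to\Pic(\tilde Y_{m,n,t})$ in the exact sequence is nonzero after tensoring with $\QQ$, and that no spurious rank is introduced in the passage between $\Cl$ and $\Pic$. I would also need to handle potential Segre-type edge cases (such as $t=2$, where $\tilde Y_{m,n,2}$ is the Segre embedding of $\PP^{m-1}\times\PP^{n-1}$ and has Picard rank two), where a direct appeal to Lemma~\ref{lem-consOfPic} does not suffice and one needs a separate argument, perhaps via the sheaf-theoretic framework of \cite{RSW} to directly compute the relevant kernels and cokernels of Chern-class maps.
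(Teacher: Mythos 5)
Your overall strategy matches the paper's: reduce to Lemma~\ref{lem-consOfPic} by showing $\tilde Y_{m,n,t}$ has Picard number one, using Bruns--Vetter's computation $\Cl(A_{m,n,t})\cong\ZZ$. But the step that is supposed to convert this into a statement about $\Pic(\tilde Y_{m,n,t})$ contains a genuine gap. The exact sequence you write, $\ZZ\to\Pic(\tilde Y)\to\Cl(Y)\to 0$, is not correct: the standard sequence (Hartshorne, Exercise II.6.3, which the paper uses) is $0\to\ZZ\to\Cl(\tilde Y)\to\Cl(Y)\to 0$ with the \emph{class} group of $\tilde Y$ in the middle, and it yields $\Cl(\tilde Y_{m,n,t})\cong\ZZ\oplus\ZZ$ for $t\geq 2$. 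Your version would make $\Cl(Y)$ a quotient of $\Pic(\tilde Y)$ modulo the hyperplane class, which fails precisely in the case at hand: for $t>2$ the generator $\bar I_{m,n,t-1}$ of $\Cl(Y)\cong\ZZ$ does not come from a line bundle on $\tilde Y$, so $\Pic(\tilde Y)\to\Cl(Y)$ is not surjective. Moreover, even granting your sequence, it would only bound $\operatorname{rank}\Pic(\tilde Y)\leq 1+\operatorname{rank}\Cl(Y)=2$, so the conclusion ``rank exactly one'' does not follow from it. The essential missing content is exactly what the paper proves: that for $t>2$ \emph{no nonzero multiple} of $\bar I_{m,n,t-1}$ is Cartier on the punctured spectrum of $A_{m,n,t}$, which it establishes by localizing at $x_{1,1}$ and reducing to the corresponding question for $\bar I_{m-1,n-1,t-2}$ on the smaller determinantal variety, with $t=3$ as the base case. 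Without some such argument, the class-group computation is perfectly consistent with $\Pic(\tilde Y_{m,n,t})$ having rank two, and the appeal to Lemma~\ref{lem-consOfPic} collapses. The Springer-resolution alternative you sketch has the same problem: knowing $\Pic$ of the resolution does not by itself tell you which classes are Cartier on the singular image.

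On the edge cases: you are right to flag $t=2$, where the Segre variety has Picard rank two and Lemma~\ref{lem-consOfPic} does not apply directly, but your proposed fallback (recomputing kernels and cokernels of Chern-class maps via \cite{RSW}) is left unargued. The paper disposes of this case at once by observing that $\tilde Y_{m,n,2}\cong\PP^{m-1}\times\PP^{n-1}$ is smooth, so all Lyubeznik numbers are embedding-independent by \cite{GarciaLopez-Sabbah} or \cite{Switala-proj-lambda}; you may want to adopt that route rather than redo the sheaf-theoretic computation.
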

\begin{proof}
  Let $A_{m,n,t}$ be the ring obtained as quotient of the polynomial
  ring $\KK[x_{i,j}|1\le i\le m,1\le j\le n]$ by the $t$-minors of the
  matrix $x:=((x_{i,j}))$.

  
  The case $t=1$ is trivial. If $t=2$, the associated projective
  variety is the product of two projective spaces, and in particular
  smooth. By \cite{GarciaLopez-Sabbah}, or \cite{Switala-proj-lambda}, the
  Lyubeznik numbers of $\tilde Y_{m,n,2}$ are independent of the embedding.

  Now consider the case $t>2$.  By \cite[Cor.~8.4]{BrunsVetter}, the
  divisor class group of $A_{m,n,t}$ is $\ZZ$, a generator being the
  ideal $\bar I_{m,n,t-1}$ of $A_{m,n,t}$ generated by the
  $(t-1)$-minors of the first $t-1$ rows (or columns) of $x$.


  Since determinantal varieties are normal, they satisfy 
  condition $(*)$ in \cite[Page 130]{Hartshorne-book}.  By
  \cite[Exercise II.6.3]{Hartshorne-book}, there is a short exact
  sequence $0\to \ZZ\to \Cl(\tilde Y)\to\Cl(Y)\to 0$, $Y$ the cone
  over $\tilde Y$, where the last map factors through the class group
  $\Cl(Y\minus
  P)$ of the complement of the origin in $Y$.
  For $t\geq 2$ this implies that $\Cl(\tilde
  Y_{m,n,t})=\ZZ\oplus\ZZ$. In this sequence, $1\in\ZZ$ is sent to the
  generic hyperplane section of $\tilde Y$. In order to determine the
  Picard group of $\tilde Y_{m,n,t}$ we need by
  \cite[Prop.~II.6.15]{Hartshorne-book} to determine the Cartier
  classes of $\Cl(\tilde Y_{m,n,t})$. From the preceding, this amounts
  to checking which multiples of $\bar I_{m,n,t-1}$ are Cartier on the
  punctured spectrum of $A_{m,n,t}$. One sees easily that for $t=2$,
  $\bar I_{m,n,t-1}$ is Cartier on the punctured spectrum. For $t>2$
  only its trivial power is Cartier: by the coordinate change
  expounded in \cite{LSW}, powers of $\bar I_{m,n,t-1}$ are locally
  principal on the open set $U_{x_{1,1}}$ if and only if corresponding
  powers of $\bar I_{m-1,n-1,t-2}$ are locally principal everywhere on
  $Y_{m-1,n-1,t-1}$; for $t=3$ this is clearly not so.  Hence the
  Picard group of $\tilde Y_{m,n,t}$ is $\ZZ$ for $t>2$. Now use Lemma
  \ref{lem-consOfPic}. 

\end{proof}

\begin{rmk}
  In particular, the Lyubeznik numbers $\lambda_{p,q}$ of
  determinantal varieties computed by L\"orincz and Raicu in \cite{LR}
  for the standard embedding equal those of any embedding, at least for
  $p\geq 2$.
\schluss
\end{rmk}

\begin{rmk}
  Suppose $G$ is a semisimple linear algebraic group, $P$ a parabolic
  subgroup and $w$ an element of the Weyl group of $G$. The Schubert variety
  $X_P(w):=BwP/P$ sits inside the homogeneous space $G/P$, and every
  line bundle on $X_P(w)$ is the restriction of a line bundle on $G/P$,
  \cite{Mathieu}. In particular, the Picard group of $X_P(w)$ is
  generated by the Schubert divisors (the Schubert varieties inside
  $X_P(w)$ of codimension one), \cite[Prop.~2.2.8]{Brion}.\schluss
\end{rmk}

\begin{prb}
  Compute the Lyubeznik numbers of Schubert varieties in their
  Schubert divisor embeddings.\schluss
\end{prb}

\subsubsection{Toric varieties}

Suppose $\tilde Y$ is the toric variety attached to a complete fan
$\Delta$ that is projective. If $\Delta$ is smooth, or at least
simplicial, then the Picard group of $\tilde Y$ is a free Abelian
group generated by the torus invariant (Cartier) divisors
corresponding to the $n$ rays of $\Delta$,
\cite[Thm.~4.2.1]{CoxLittleSchenck}. The ambient lattice imposes
$d:=\dim(\tilde Y)$ many independent relations on these divisors, so
that $\Pic(\tilde Y)=\ZZ^{n-d}$. In order for this number to be 1,
there is very little choice for $\Delta$; it forces $\tilde Y$ to be a
weighted projective space. These are $\QQ$-homology manifolds
and thus yield the same Lyubeznik numbers under all embeddings by
\cite{RSW}.

However, singular fans fail the Picard rank formula above and can have
Picard group $\ZZ$ with greater variety. The Picard group is
free if the fan is full-dimensional by
\cite[Thm.~4.2.5]{CoxLittleSchenck}, and equals the inverse limit of
the quotient lattices $M/M(\sigma)$, taken modulo $M$ by
\cite[Thms.~4.2.1,4.2.9]{CoxLittleSchenck}. 

\begin{exa}
If $\Delta$ is a complete rational fan in $\ZZ^3$, one can use the
description of the Picard group via support functions
to show that if $\Delta$ has at most one
simplicial cone, then the Picard group of the associated toric variety
is rank one. For example, the fan over the sides of a cube leads to a
projective three-fold with Picard number one. The generating support
function takes the value zero on one square and one on the opposing
square (see
\cite[Exa. 1.5.(3)]{Fulton}).
Our next result shows that all projective toric threefolds
have their Lyubeznik table independent of the embedding. 
\end{exa}

\begin{thm}\label{thm-toric-3fold}
  Let $\tilde Y$ be the projective variety to a complete projective
  fan in $\ZZ^3$ with Picard number $\tilde p+1$. Then for any cone $Y$ over
  $\tilde Y$ its Lyubeznik numbers take the form
  \[
  \Lambda(Y)=\begin{pmatrix}
    \cdot&\cdot&\cdot&\tilde p&\cdot\\
    \cdot&\cdot&\cdot&\cdot&\cdot\\
    \cdot&\cdot&\cdot&\cdot&\tilde p\\
    \cdot&\cdot&\cdot&\cdot&\cdot\\
    \cdot&\cdot&\cdot&\cdot&1\end{pmatrix}
    \]
\end{thm}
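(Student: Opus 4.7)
The plan is to combine four ingredients: standard Lyubeznik vanishings at the boundary of the table, the machinery of \cite{RSW} for the block $p \geq 2$, hard Lefschetz for intersection cohomology of projective toric varieties, and an Euler characteristic argument on the Grothendieck spectral sequence to settle the remaining rows $p \in \{0, 1\}$.

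Since $\dim Y = 4$, the Lyubeznik table is $5 \times 5$; the general Lyubeznik support result gives $\lambda_{p, q} = 0$ for $p > q$, and Remark~\ref{two zero terms} yields $\lambda_{0, 4} = \lambda_{1, 4} = 0$. The cone $Y$ over a projective toric threefold is (modulo a Veronese reduction via Lemma~\ref{lem-consOfPic} if necessary) a normal affine toric variety, analytically irreducible at the vertex, so by \cite{Zhang-Compositio} the associated Hochster--Huneke graph has a single vertex and $\lambda_{4, 4} = 1$. For the entries with $p \geq 2$, I would follow the proof of Lemma~\ref{lem-consOfPic} and invoke \cite[Prop.~1--3]{RSW}: each $\lambda_{p, q}$ is the dimension of a kernel or cokernel of cup product with $L := c_1(\calL) \in \IH^2(\tilde Y, \QQ)$ on intersection cohomology of $\tilde Y$. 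Since projective toric varieties have $\IH^{\mathrm{odd}}(\tilde Y, \QQ) = 0$, only the three maps $L : \IH^{2i}(\tilde Y, \QQ) \to \IH^{2i+2}(\tilde Y, \QQ)$ for $i \in \{0, 1, 2\}$ arise. Hard Lefschetz on intersection cohomology (valid for projective toric varieties) forces the middle map $L : \IH^2 \to \IH^4$ to be an isomorphism, while $L : \IH^0 \to \IH^2$ is injective and $L : \IH^4 \to \IH^6$ is surjective, each with (co)kernel of dimension $\dim \IH^2(\tilde Y, \QQ) - 1$. A combinatorial identification $\dim \IH^2(\tilde Y, \QQ) = \tilde p + 1$ then yields $\lambda_{2, 4} = \tilde p$ and vanishing of all remaining entries with $p \geq 2$ besides $\lambda_{4, 4}$.

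For the rows $p \in \{0, 1\}$, I would use that every $E_r$-page of the Grothendieck spectral sequence consists of finite direct sums of copies of $E := H^n_\frakm(R_n)$ by Remark~\ref{rmk-L-argument}(c), while the abutment sits in total degree $n$ alone. The socle dimension Euler characteristic is therefore preserved and gives
\[
\sum_{p, q} (-1)^{q - p} \lambda_{p, q}(Y) \;=\; 1.
\]
Combined with the values already established, this is a single linear constraint on the surviving entries in the top two rows. To extract $\lambda_{0, 3} = \tilde p$ it remains to show $\lambda_{1, q} = 0$ for every $q$ and $\lambda_{0, q} = 0$ for $q \neq 3$. For this, Lemma~\ref{lem-rw} reinterprets these entries as de Rham cohomology dimensions of the holonomic duals of the Eulerian modules $H^{n - q}_I(R_n)$; the toric and Eulerian structure should then permit direct verification of the required vanishings, after which the Euler identity forces $\lambda_{0, 3} = \tilde p$.

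The main obstacle is this last step: the rows $p \in \{0, 1\}$ lie outside the regime of \cite{RSW}, and require a bespoke $\calD$-module analysis exploiting the Eulerian nature of the toric local cohomology modules together with the toric combinatorics of $\tilde Y$. A secondary but non-trivial input is the identification $\dim \IH^2(\tilde Y, \QQ) = \tilde p + 1$ in the possibly non-simplicial case, for which one has to draw on the theory of intersection cohomology of toric varieties rather than the more standard Picard-theoretic description available in the simplicial setting.
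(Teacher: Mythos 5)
There is a genuine gap, and in fact the central quantitative step would give a wrong answer. Your plan rests on the identification $\dim\IH^2(\tilde Y;\QQ)=\tilde p+1$ together with the claim that, via \cite[Prop.~1--3]{RSW}, the entries $\lambda_{p,q}$ with $p\geq 2$ are (co)kernels of the Lefschetz operator on \emph{intersection} cohomology. Both fail for non-simplicial fans. The paper's own example (the fan over the faces of a cube) has Picard number one, so $\tilde p=0$, yet its toric threefold has $\dim\IH^2(\tilde Y;\QQ)=3$ (the toric $h$-vector of the octahedron is $(1,3,3,1)$); the group that actually satisfies $\dim=\tilde p+1$ is ordinary cohomology $H^2(\tilde Y;\QQ)\simeq\Pic(\tilde Y)\otimes\QQ$ \cite[Thm.~12.3.2]{CoxLittleSchenck}. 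Moreover the RSW description involves the Chern class acting on hypercohomology of the perverse cohomology sheaves of $\omega_{\tilde Y}$, which coincides with $\IH$ only when $\tilde Y$ is a $\QQ$-homology manifold; a singular non-simplicial toric threefold is not, and the extra perverse constituents are exactly what your hard-Lefschetz-on-$\IH$ computation ignores. Concretely, your recipe would give $\lambda_{2,4}=\dim\IH^2-1=2$ for the cube-fan variety, whereas the correct value is $\tilde p=0$.

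The second gap is the one you flag yourself: the rows $p\in\{0,1\}$, for which you offer only a plan. In the paper these rows are not an afterthought but the engine of the proof, and the $p\geq2$ block is deduced from them, not the other way around. The actual argument runs: toric varieties are Cohen--Macaulay (Hochster), so Lemma \ref{lem-S2ramification} and the Dao--Takagi localization give the shape of the table, Artinianness of $H^{n-2}_I(R_n)$ and $H^{n-3}_I(R_n)$, and $\lambda_{1,3}=0$; then $\lambda_{0,2}$ and $\lambda_{0,3}$ are read off as top cohomology of the affine complement $U$, computed via the projective complement, Alexander/Poincar\'e duality, simple connectedness of complete toric varieties (killing $\lambda_{0,2}$), and $H^2(\tilde Y;\QQ)\simeq\Pic\otimes\QQ$ (giving $\lambda_{0,3}=\tilde p$); finally convergence of the Grothendieck spectral sequence yields $\lambda_{2,4}=\lambda_{0,3}$, $\lambda_{3,4}=\lambda_{0,2}=0$, $\lambda_{4,4}=1$, with the hypersurface case in $\PP^4_\CC$ treated separately via \cite[Exp.~XII, Cor 3.7]{SGA2}. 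Your Euler-characteristic constraint is correct but is only one linear relation and cannot substitute for these computations; as it stands the proposal neither establishes the $p\in\{0,1\}$ entries nor correctly computes the $p\geq2$ ones.
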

We postpone the proof until the end of the final section. 

\begin{prb}
  Express Lyubeznik numbers of projective toric varieties (of Picard rank 1
  or otherwise)
  in terms of fan data (and, possibly, polytopes).\schluss
\end{prb}

\subsubsection{Horospherical varieties}

Horospherical varieties are complex normal algebraic varieties on
which a connected complex reductive algebraic group $G$ acts with an
open orbit that is isomorphic to a torus bundle over a flag
variety; the dimension of this torus is referred to as the rank of the
variety. In particular, toric and flag varieties are examples of
horospherical varieties.

Any flag variety $G/P$ with $P$ a parabolic subgroup of $G$ is
smooth and projective. Their Lyubeznik numbers
are hence all topological, by \cite{Switala-proj-lambda}.  By
\cite[Thm.~0.1]{Pasquier-Picard1}, a smooth projective horospherical
variety of Picard number 1 must either be a homogeneous space or have
horospherical rank one.

There are many
singular horospherical  varieties of Picard number one.
For example,
let $G$ be a simple linear algebraic group and choose two dominant weights
$\chi_1$ and $\chi_2$ that cannot be written as the sum of a common
dominant weight with another dominant weight. Writing $V(\chi)$ for the
simple $G$-module of weight $\chi$, 
let $\tilde Y$ be the closure of the $G$-orbit of the sum of two
highest weight vectors in $\PP(V(\chi_1)\oplus V(\chi_2))$.  It is a
projective variety of horospherical rank one, it has Picard number one
and is smooth only in very few cases, namely when $\chi_1$ and
$\chi_2$ are fundamental weights $\varpi_\alpha$ and $\varpi_\beta$
and $(G,\alpha,\beta)$ is in the list of
\cite[Thm.~1.7]{Pasquier-Picard1}. It has three G-orbits (one open and
two closed), the singularities if they exist, are on the closed
orbit(s). By taking a longer list of weights $\chi_1,\ldots,\chi_n$
one can produce (usually singular) varieties of horospherical rank
$n-1$.

In all these Picard rank 1 cases, the Lyubeznik numbers
$\lambda_{p,q}$ with $p>1$ of the cone of $\tilde Y$ are embedding independent,
and can hence be computed from the embedding that arises from the
definition.
\begin{prb}
  Compute Lyubeznik numbers of horospherical varieties of Picard number
  one for the standard families.\schluss
\end{prb}

\subsection{Lyubeznik numbers in small dimension}

We consider here to what extent the Lyubeznik numbers of varieties of
small dimension are functions of the associated projective variety
only. Some independence is known quite generally.

\begin{rmk}
\label{highest lambda}
\begin{asparaenum}

\item If $\tilde Y$ is a projective scheme of dimension at most $1$ over any
  field (not necessarily connected or equi-dimensional) then $R/I$ is
  two-dimensional for any embedding, and so $\Lambda$ is independent
  of embeddings by \cite{W-lambda,Kawasaki}

\item Set $d-1=\dim(\tilde Y)$. Then $\lambda_{d,d}$ is independent of
  embeddings unconditionally by \cite{Zhang-Compositio}.
  \schluss

\end{asparaenum}
\end{rmk}

We begin with some preparations involving Hartshorne's (local) algebraic de
Rham cohomology.
\begin{thm}
\label{thm: main}
Let $\tilde Y\subseteq \PP^{n-1}_\KK$ be a projective variety over a field
$\KK$ of characteristic 0, let $Y\subseteq \AA^{n}_\KK$ be the
affine cone of $\tilde Y$, and let $P$ be its vertex. Let $H^j_P(Y)$ denote
the local de Rham cohomology of $Y$ supported in $\{P\}$. Assume that
the Picard group of $\tilde Y$ has rank 1. Then $\dim_k(H^j_P(Y))$
depends only on $\tilde Y$, but not on the embedding $\tilde
Y\subseteq \PP^n_\KK$. More precisely, if $\tilde Y \subseteq \PP^{n'-1}_\KK$ is
another embedding of $\tilde Y$ into a projective space and $Y'$ is its
affine cone with the vertex $P'$ , then
$\dim_\KK(H^j_P(Y))=\dim_\KK(H^j_{P'}(Y'))$.
\end{thm}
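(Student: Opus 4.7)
The plan is to relate $H^j_P(Y)$ to the de Rham cohomology of the punctured cone $Y^\circ := Y\setminus\{P\}$, and then to use the $\CC^*$-bundle structure $Y^\circ\to \tilde Y$ to rewrite everything in terms of the Chern class of the line bundle $\calL := g^*\calO_{\PP^{n-1}}(1)$ associated to the embedding $g\colon \tilde Y\into \PP^{n-1}$. Under the Picard-rank-one hypothesis, this Chern class is unique up to a nonzero rational scalar, and kernels and cokernels of cup product with it on $\CC$-vector spaces are unaffected by such rescaling.

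First I reduce to $\KK=\CC$ via Remark \ref{rmk-L-argument}(1) and invoke Hartshorne's comparison theorem so that $H^\bullet_\dR(-) = H^\bullet((-)^\an;\CC)$, including the supported version at $P$. The open-closed triangle for $\{P\}\into Y \hookleftarrow Y^\circ$ yields the long exact sequence
\[
\cdots \to H^k_P(Y) \to H^k_\dR(Y) \to H^k_\dR(Y^\circ) \to H^{k+1}_P(Y)\to \cdots.
\]
The scalar action $(t,y)\mapsto ty$ contracts $Y^\an$ onto $P$, hence $H^0_\dR(Y)=\CC$ and $H^{>0}_\dR(Y)=0$. The long exact sequence therefore gives $H^k_P(Y)\simeq H^{k-1}_\dR(Y^\circ)$ for $k\ge 2$, while $H^0_P(Y)$ and $H^1_P(Y)$ are controlled by connected components of $\tilde Y$ and are trivially intrinsic.

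Next, $Y^\circ\to \tilde Y$ is a principal $\CC^*$-bundle with associated line bundle $\calL$, hence topologically an oriented circle bundle with Euler class $c_1(\calL)\in H^2(\tilde Y;\CC)$. Its Gysin sequence reads
\[
\cdots \to H^{k-2}(\tilde Y;\CC) \xrightarrow{c_1(\calL)\cup} H^k(\tilde Y;\CC) \to H^k(Y^\circ;\CC) \to H^{k-1}(\tilde Y;\CC) \xrightarrow{c_1(\calL)\cup} H^{k+1}(\tilde Y;\CC) \to \cdots,
\]
so that $\dim_\CC H^k(Y^\circ;\CC)$ is determined by the ranks of $c_1(\calL)\cup(-)$ on the cohomology of $\tilde Y$. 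Ampleness of $\calL$ forces $c_1(\calL)\ne 0$ in $\Pic(\tilde Y)\otimes_\ZZ\QQ$; the Picard-rank-one hypothesis gives $\Pic(\tilde Y)\otimes\QQ\simeq \QQ$, so for any other embedding $\tilde Y\subseteq \PP^{n'-1}$ with line bundle $\calL'$, one has $c_1(\calL')=\lambda\, c_1(\calL)$ in $H^2(\tilde Y;\CC)$ for some $\lambda\in\QQ^\times$. Multiplication by $\lambda$ preserves $\CC$-ranks of linear maps, hence all kernels and cokernels of $c_1(-)\cup$, and combining with the previous step yields $\dim_\CC H^j_P(Y)=\dim_\CC H^j_{P'}(Y')$ for every $j$.

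The main potential obstacle is ensuring the open-closed triangle and Hartshorne's comparison apply to the possibly singular affine cone $Y$; both hold in the generality established by Hartshorne in characteristic zero, so the genuine content of the proof is concentrated in the Chern-class/Gysin analysis above.
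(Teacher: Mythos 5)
Your argument is correct, and its second half (the Picard-rank-one step: ample classes become rationally proportional in $H^2$, and rescaling by $\lambda\in\QQ^\times$ does not change ranks of cup product, hence not the dimensions of kernels and cokernels) is exactly the paper's argument, where it is phrased via a common power $\calL^{ac}\simeq(\calL')^{bc}$ and $\xi(\calL^m)=m\xi(\calL)$. Where you differ is in how you connect $H^j_P(Y)$ to those cup-product maps. The paper simply quotes Hartshorne's Proposition III.3.2 of \cite{Hartshorne-DRCAV}, which gives, purely algebraically and over an arbitrary field of characteristic zero, the exact sequences
\[
0\to \KK\to H^0_{\dR}(\tilde Y)\to H^1_P(Y)\to 0,\qquad
\cdots\to H^{j-2}_{\dR}(\tilde Y)\xrightarrow{\cup\xi} H^{j}_{\dR}(\tilde Y)\to H^{j+1}_P(Y)\to H^{j-1}_{\dR}(\tilde Y)\xrightarrow{\cup\xi}\cdots,
\]
so that $\dim H^j_P(Y)$ for $j\ge 2$ is a kernel dimension plus a cokernel dimension for $\cup\,\xi$, with no base change needed. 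You instead reduce to $\KK=\CC$ (legitimate, and endorsed by Remark \ref{rmk-L-argument}(1)), invoke the comparison with singular cohomology, and re-derive Hartshorne's sequences topologically: contractibility of the cone gives $H^j_P(Y)\simeq H^{j-1}(Y^\circ;\CC)$ for $j\ge 2$, and the Gysin sequence of the $\CC^*$-bundle $Y^\circ\to\tilde Y$ gives the same kernel-plus-cokernel formula, with the low-degree terms $H^0_P$, $H^1_P$ handled by connectedness as in the paper's first sequence. Two small remarks: the Euler class of $Y^\circ\to\tilde Y$ is $\pm c_1(\calL)$ depending on whether one uses the tautological bundle or its dual, but the sign is irrelevant for ranks; and what your route buys is self-containedness and topological transparency over $\CC$, while what it gives up is the paper's uniformity over an arbitrary characteristic-zero field without any reduction step.
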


To prove Theorem \ref{thm: main}, we need the following result of Hartshorne.
\begin{thm}[Proposition III.3.2 in \cite{Hartshorne-DRCAV}]
Let $\tilde Y,Y,P$ be the same as in Theorem \ref{thm: main}. Then
$H^0_P(Y)=0$ and there are two exact sequences:
\[
0\to \KK\to H^0_{\dR}(\tilde Y)\to H^1_P(Y)\to 0
\]
and 
\[
0\to H^1_{\dR}(\tilde Y)\to H^2_P(Y)\to H^0_{\dR}(\tilde Y)\to
H^2_{\dR}(\tilde Y)\to
H^3_P(\tilde Y)\to H^1_{\dR}(\tilde Y)\to H^3_{\dR}(\tilde Y)\to \cdots
\]
where the maps $H^i_{\dR}(\tilde Y)\to H^{i+2}_{\dR}(\tilde Y)$ are given
by the cup product with the Chern class $\xi\in H^2_{\dR}(\tilde Y)$
of the hyperplane section 
(\emph{i.e.}, the first Chern class of $\calO_{\tilde
  Y}(1)$).\qed
\end{thm}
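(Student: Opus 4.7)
The plan is to combine the local-cohomology long exact sequence for the pair $(Y, Y \minus \{P\})$ with a Gysin-type sequence arising from the projection of the punctured cone onto $\tilde Y$. The first ingredient will be the vanishing $H^0_\dR(Y) = \KK$ and $H^j_\dR(Y) = 0$ for $j \geq 1$: by Remark \ref{rmk-L-argument}(1) one may assume $\KK = \CC$, and then Hartshorne's comparison theorem identifies algebraic de Rham cohomology with the analytic singular cohomology of $Y^{\an}$, which is contractible onto the vertex via scalar multiplication $(\lambda, y) \mapsto \lambda \cdot y$.

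Write $Y^\circ := Y \minus \{P\}$. Since $Y$ has positive dimension, $H^0_P(Y) = \Gamma_P(\calO_Y) = 0$, so the sections-with-support long exact sequence together with the vanishing above collapses to
\[
0 \to \KK \to H^0_\dR(Y^\circ) \to H^1_P(Y) \to 0
\]
and to isomorphisms $H^j_\dR(Y^\circ) \simeq H^{j+1}_P(Y)$ for every $j \geq 1$. Independently, the projection $\pi \colon Y^\circ \to \tilde Y$ is a principal $\GG_m$-bundle, obtained as the complement of the zero section inside the tautological line bundle $\calO_{\tilde Y}(-1)$. This structure produces a Gysin exact sequence
\[
\cdots \to H^{j-2}_\dR(\tilde Y) \xrightarrow{\cup \xi} H^j_\dR(\tilde Y) \xrightarrow{\pi^*} H^j_\dR(Y^\circ) \to H^{j-1}_\dR(\tilde Y) \xrightarrow{\cup \xi} H^{j+1}_\dR(\tilde Y) \to \cdots
\]
whose degree-shifting map is (up to a sign) cup product with the Chern class $\xi = c_1(\calO_{\tilde Y}(1)) \in H^2_\dR(\tilde Y)$.

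Splicing the two sequences will deliver Hartshorne's assertion. Taking $j = 0$ in the Gysin sequence yields $H^0_\dR(Y^\circ) \simeq H^0_\dR(\tilde Y)$, since $H^{-1}_\dR(\tilde Y) = 0$; substituting into the short exact sequence above produces the first claim $0 \to \KK \to H^0_\dR(\tilde Y) \to H^1_P(Y) \to 0$. For $j \geq 1$, substituting $H^j_\dR(Y^\circ) \simeq H^{j+1}_P(Y)$ into the Gysin sequence and reindexing gives the second long exact sequence with the cup-with-$\xi$ maps in the stated positions.

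The main obstacle will be rigorously establishing the Gysin sequence in algebraic de Rham cohomology when $\tilde Y$ is singular. Over $\CC$ this reduces via analytification to the classical Thom--Gysin sequence for the oriented $S^1$-bundle underlying $\calO_{\tilde Y}(-1)^{\an}$; in general characteristic zero one argues via the Leray spectral sequence
\[
E_2^{p,q} = H^p_\dR(\tilde Y) \otimes H^q_\dR(\GG_m) \Longrightarrow H^{p+q}_\dR(Y^\circ)
\]
attached to the $\GG_m$-torsor $\pi$, whose only potentially nonzero differential $d_2 \colon E_2^{p,1} \to E_2^{p+2,0}$ is cup product with the Chern class.
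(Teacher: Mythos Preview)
The paper does not prove this statement at all: it is quoted verbatim from Hartshorne's \cite{Hartshorne-DRCAV} and closed with a \qed. There is therefore no ``paper's own proof'' to compare against; your sketch is essentially the argument Hartshorne gives in the cited reference (local cohomology sequence for the pair $(Y,Y^\circ)$, contractibility of the cone, and the Gysin sequence for the $\GG_m$-bundle $Y^\circ\to\tilde Y$).

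One small correction: your justification ``$H^0_P(Y)=\Gamma_P(\calO_Y)=0$'' conflates local \emph{de Rham} cohomology with coherent local cohomology. The correct reason $H^0_P(Y)=0$ is that in the long exact sequence
\[
0\to H^0_P(Y)\to H^0_\dR(Y)\to H^0_\dR(Y^\circ)\to H^1_P(Y)\to\cdots
\]
the restriction $H^0_\dR(Y)\to H^0_\dR(Y^\circ)$ is injective (both equal $\KK$ when $Y$ is connected of positive dimension). This is cosmetic; the rest of your splice-and-Gysin argument is sound, and your remark that in pure characteristic zero one replaces the topological Gysin sequence by the two-row Leray spectral sequence of the $\GG_m$-torsor, with $d_2$ given by cup product with $c_1(\calO_{\tilde Y}(1))$, is the standard way to make the argument algebraic.
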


\begin{proof}[Proof of Theorem \ref{thm: main}]
The case when $j\leq 1$ is clear from the long exact sequence above.

Since the Picard group of $\tilde Y$ has rank 1, any two very ample
line bundles on $\tilde Y$ have a common power. It is thus
sufficient to consider the case
where the two ample line bundles in question are $\calL$ and
$\calL^m$.

Let $\xi(\calL)\in H^2_{dR}(\tilde Y)$
be the first Chern class of $\calL$, represented by a generic
hyperplane section with the embedding given by $\calL$. 
Then we have $\xi(\calL^m)=m\xi(\calL)$. Since the cup
product is linear and $\charac(\KK)=0$, the maps $H^i_{\dR}(\tilde
Y)\xrightarrow{\cup \xi} H^{i+2}_{\dR}(\tilde Y)$ and $H^i_{\dR}(\tilde
Y)\xrightarrow{\cup m\xi} H^{i+2}_{\dR}(\tilde Y)$ have the same
rank. Therefore $\dim_\KK(\ker(H^i_{dR}(\tilde Y)\to
H^{i+2}_{\dR}(\tilde Y)))$ and $\dim_\KK(\coker(H^i_{dR}(\tilde Y)\to
H^{i+2}_{\dR}(\tilde Y)))$ depend only on $\tilde Y$, but not on the
choice of the embedding (or equivalently, not on the choice of ample
line bundles $\calL$). When $j\geq 2$ we have
\[
\dim_\KK(H^j_P(Y)=\dim_\KK(\ker(H^{j-2}_{\dR}(\tilde Y)\to
H^{j}_{\dR}(\tilde Y))))+
\dim_\KK(\coker(H^{j-3}_{\dR}(\tilde Y)\to H^{j-1}_{\dR}(\tilde Y)))),
\]
hence the conclusion holds for $\dim_k(H^j_P(Y))$ when $j\geq 2$. 
\end{proof}

\begin{cor}
\label{indep for Ogus' q invariant}
Assume that the Picard group of $\tilde Y$ has rank 1. Then
$\lambda_{p,q}$ is independent of embeddings for all $q< n-f_Y$,
with $f_Y$ as in Remark \ref{rmk-Ogus-1}.
\end{cor}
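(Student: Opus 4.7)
The plan is to localize the Corollary to the top row of the Lyubeznik table in the stated range, express $\lambda_{0,q}$ as a single abutment entry of the \v{C}ech--de Rham spectral sequence, and finish with a Chern-class computation parallel to that in the proof of Theorem~\ref{thm: main}. Fix $q<n-f_Y$. By the definition of $f_Y$ the module $H^{n-q}_I(R_n)$ is Artinian, hence by Remark~\ref{rmk-L-argument}(3) it is a finite direct sum of copies of the injective hull $E=H^n_\frakm(R_n)$. Thus $H^p_\frakm(H^{n-q}_I(R_n))=0$ for $p\geq 1$, forcing $\lambda_{p,q}=0$ whenever $p\geq 1$, while $\lambda_{0,q}$ equals the multiplicity of $E$ in $H^{n-q}_I(R_n)$. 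Because holonomic duality fixes the unique simple holonomic module supported at the origin, Lemma~\ref{lem-rw} applied to the graded equivariant module $H^{n-q}_I(R_n)$ rewrites this multiplicity as $\dim_\CC H^n_\dR(H^{n-q}_I(R_n))$, which is exactly $\rho_{0,q}^2$.

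Next, the same Artinian-ness throughout the strip $\{q':q'<n-f_Y\}$ kills every $\rho^2_{p,q'}$ with $p\geq 1$. Consequently every differential of \eqref{eqn-dR-ss} that touches a cell $(0,q)$ in this strip is forced to vanish: outgoing arrows $d_r\colon(0,q)\to(-r,q+r-1)$ leave the first quadrant, and incoming arrows $d_r\colon(r,q-r+1)\to(0,q)$ have $r\geq 1$ together with $q-r+1<n-f_Y$, so their source vanishes. Hence $\rho^\infty_{0,q}=\rho^2_{0,q}=\lambda_{0,q}$; moreover, in every total degree $k<n-f_Y$ the only surviving contribution to the abutment of \eqref{eqn-dR-ss} is the cell $(0,k)$, giving
\[
\lambda_{0,k}\;=\;\dim_\CC H^\dR_k(Y).
\]

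It remains to see that this de Rham homology of $Y$ depends only on $\tilde Y$. Because $Y$ is contractible onto its vertex $P$, the long exact sequence of the pair $(Y,Y\setminus P)$ in Borel--Moore (equivalently, de Rham) homology identifies $H^\dR_k(Y)\simeq H^\dR_k(Y\setminus P)$ for $k\geq 2$; the boundary cases $k=0,1$ are pinned down by the connectedness of $Y$. Now $Y\setminus P$ is a $\CC^\ast$-bundle over $\tilde Y$ associated to the line bundle $\calL$ defining the cone, and its Gysin sequence expresses $\dim H^\dR_k(Y\setminus P)$ through kernels and cokernels of cup product with $c_1(\calL)$ on $H^\bullet_\dR(\tilde Y)$. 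Under the Picard rank~1 hypothesis any two admissible $c_1(\calL)$ are rational multiples of a common class, so the relevant $\CC$-dimensions of these kernels and cokernels are intrinsic to $\tilde Y$---precisely the kernel-cokernel argument that drives the proof of Theorem~\ref{thm: main} for $H^j_P(Y)$. The main obstacle will be supplying the Gysin sequence on the homological side when $\tilde Y$ is singular, but the topological sequence is valid unconditionally and the algebraic counterpart can be extracted by mirroring Hartshorne's Proposition~III.3.2 in the homological direction.
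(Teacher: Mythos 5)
Your proposal is correct in substance, and it takes a genuinely different route from the paper. In the strip $q<n-f_Y$ both arguments start the same way (Artinian local cohomology is a sum of copies of $E=H^n_\frakm(R_n)$, so only $\lambda_{0,q}$ survives), but the paper then identifies $\lambda_{0,q}$ with $\dim_\KK H^q_P(Y)$, the \emph{local} de Rham cohomology at the vertex, via Matlis duality and Ogus's comparison theorem, and concludes by quoting Theorem~\ref{thm: main} (whose proof rests on Hartshorne's Proposition III.3.2). You instead stay inside the spectral sequence \eqref{eqn-dR-ss}: the Artinianness throughout the strip kills all $\rho^2_{p,q'}$ with $p\geq 1$, the differential analysis is correct, and the filtration of the abutment in total degree $k<n-f_Y$ has a single graded piece, giving $\lambda_{0,k}=\dim_\CC H^\dR_k(Y)$; you then pass to the punctured cone and run a Gysin sequence for $Y\setminus P\to\tilde Y$, finishing with the same ``common power of $\calL$, hence equal kernels and cokernels of the Chern/Euler class'' step. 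This is close in spirit to the paper's own remark following the corollary (which uses \eqref{eqn-Cech-deRham} and Proposition~\ref{prop-complement-veronese}); what your route buys is independence from Ogus's theorem and from Hartshorne's III.3.2, at the price of working topologically, i.e.\ after the reduction to $\KK=\CC$ allowed by Remark~\ref{rmk-L-argument}(1), which you should invoke explicitly since the corollary is stated over an arbitrary field of characteristic zero.

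Two justifications need repair, though neither is fatal. First, de Rham homology (Borel--Moore homology over $\CC$) is not a homotopy invariant, so ``$Y$ is contractible onto its vertex'' proves nothing; the isomorphism $H^\dR_k(Y)\simeq H^\dR_k(Y\setminus P)$ for $k\geq 2$ follows from the long exact sequence you write down because the local term at $P$ is concentrated in homological degree $0$. Second, the cases $k=0,1$ are not ``pinned down by the connectedness of $Y$'': a cone is always connected, yet $H^\dR_1(Y)$ varies with $\tilde Y$ (e.g.\ with its number of components). What is true, from the same exact sequence, is that $H^\dR_0(Y)=0$ and $\dim H^\dR_1(Y)=\dim H^\dR_1(Y\setminus P)-1$, after which your Gysin analysis applies verbatim. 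As for the ``main obstacle'' you flag, it disappears once you phrase the Gysin sequence in Borel--Moore homology for the total space of $\calL$ minus its zero section: the closed--open long exact sequence and the Thom isomorphism require no smoothness of $\tilde Y$, and since $\tilde Y$ is compact its Borel--Moore homology is ordinary homology, on which cap product with $c_1(\calL^m)=m\,c_1(\calL)$ has the same kernel and cokernel dimensions as cap product with $c_1(\calL)$; combined with the rank-one Picard hypothesis (any two very ample bundles share a power, as in Lemma~\ref{lem-consOfPic} and Theorem~\ref{thm: main}), this completes your argument.
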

\begin{proof}
Assume $q< n-f_Y$. Since $\Supp(H^{n-q}_I(R_n))\subseteq \{\frakm\}$,
\cite{L-Dmods} shows that $H^{n-q}_I(R_n)\cong
H^{n}_{\frakm}(R_n)^{\lambda_{0,q}}$ and $H^p_{\frakm}(H^{n-q}_I(R_n))=0$
for $p\geq 1$. Hence $\lambda_{p,q}=0$ for all $p\geq 1$ (and $q<
n-f_Y$).

Let $D(-)$ denote the Matlis dual. Then $D(H^{\ell}_I(R_n))\cong
\hat{R_n}^{\lambda_{0,n-\ell}}$ whenever $H^\ell_I(R_n)$ is
Artinian. On the other hand, \cite[Proposition 2.2,Theorem
  2.3]{Ogus-LCDAV} shows that, for $q<n-f_Y$,
\[
D(H^{n-q}_I(R_n))\cong H^q_P(\hat{X},\calO_{\hat{X}})\cong
\hat{R_n}\otimes H^q_P(Y)
\]
where $Y$ denotes the affine cone of $\tilde Y$ with vertex $P$ and $\hat{X}$
denotes the formal completion of $\Spec(\hat{R_n})$ along the subscheme
defined by $I$. This shows that
$\dim_\KK(H^q_P(Y))=\lambda_{0,q}$. Hence $\lambda_{0,q}$ depends only
on $\tilde{Y}$ by Theorem \ref{thm: main}.
\end{proof}
\begin{rmk}
  An alternative way to look at Corollary \ref{indep for Ogus' q
    invariant} arises through Proposition \ref{prop-complement-veronese}:
  for $q>f_Y$, the multiplicities of $H^n_\frakm(R_n)$ in $H^j_I(R_n)$ are
  exactly the Betti numbers $H^{n-1+j}(U)$ where $U$ is the affine
  complement of $Y$, because of the spectral sequence
  \eqref{eqn-Cech-deRham}.   By Proposition \ref{prop-complement-veronese}
  these do not change under Veronese maps.\schluss
\end{rmk}

\medskip

We now consider the effect of Serre's conditions $(S_t)$ in $R_n/I$ on
the Lyubeznik numbers.
\begin{rmk}
\label{Serre condition under faithfully flat}
Assume that ``$\tilde Y$ satisfies $(S_t)$ locally everywhere'', by which we mean that
each local ring $\calO_{\tilde Y,\tilde \fraky}$ of the projective
scheme $\tilde Y=\Proj(R_n/I)$
satisfies Serre condition $(S_t)$.

Let $Y$ be the cone $\Spec(R_n/I)$ as always and $P$ the vertex; then
the punctured cone $Y^\circ=Y\minus P$ is a bundle over $\tilde Y$. It
follows that every local ring of $Y^\circ$ also is $(S_t)$.  So for
each non-maximal prime ideal $\frakp$ of $R_n$ such that
$\dim((R_n/I)_\frakp)\geq t$, one has $\depth((R_n/I)_\frakp)\geq t$.

In
general, if $(A,\frakn)\to (A',\frakn')$ is a faithfully flat
morphism, then
  \[\depth(A')=\depth(A)+\depth(A'/\frakn A).\]
  If $A'$ is the strict Henselization $A^{sh}$ or the completion
  $\hat A$ of $A$, then $A'$ is faithfully flat over $A$. Therefore,
  \[
  \depth\left(
     \left(
     \left((R_n/I)_\frakp\right)^{\widehat{~}}
     \right)^{sh}
         \right)^{\widehat{~}}\geq  t.
  \] \schluss
\end{rmk}

\begin{lem}
\label{lem-S2ramification}
  If $\tilde Y$ is equi-dimensional and locally everywhere $(S_2)$ then
  the off-diagonal entries $\lambda_{i-1,i}$ vanish for
  $1< i<d:=\dim(\tilde Y)+1$, and $H^{n-1}_I(R_n)$ is Artinian and injective.
\end{lem}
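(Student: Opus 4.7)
The Artinianity and injectivity of $H^{n-1}_I(R_n)$ is easier and does not actually use the $(S_2)$ hypothesis. Assuming $d = \dim Y \geq 2$ (otherwise the assertion about $\lambda_{i-1,i}$ is vacuous and the Lyubeznik table is completely determined by Remark~\ref{highest lambda}), equi-dimensionality of $Y$ and $n-1 > c = n-d$ let me invoke Remark~\ref{rmk-L-argument}(2) to conclude $\dim \Supp H^{n-1}_I(R_n) < 1$, so the support is contained in $\{\frakm\}$ and the module is Artinian. The injective-dimension bound $\injdim_R(H^{n-1}_I(R_n)) \leq \dim \Supp = 0$ from Remark~\ref{rmk-L-argument}(3) then forces injectivity, and together $H^{n-1}_I(R_n)$ is a finite direct sum of copies of $H^n_\frakm(R_n)$.

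For $\lambda_{i-1,i} = 0$ with $1 < i < d$, I must show $H^{i-1}_\frakm(H^{n-i}_I(R_n)) = 0$. Remark~\ref{rmk-L-argument}(2) yields only $\dim \Supp H^{n-i}_I(R_n) \leq i-1$, placing $H^{i-1}_\frakm$ at the top of its allowed range rather than forcing vanishing. My plan is to upgrade the support bound to $\leq i-2$ using $(S_2)$; Grothendieck vanishing will then kill $H^{i-1}_\frakm$. To effect this upgrade I would pass to the constructible-sheaf side as in the proof of Proposition~\ref{prop-rho-diag}, writing $H^{n-i}_I(R_n)$ in the form $h_!({^p}\calH^{-i}\omega_Y)$. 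The $(S_2)$ hypothesis on equi-dimensional $\tilde Y$ is equivalent to the dimension bound $\dim \Supp \calH^{-j}(\omega_{\tilde Y}^\bullet) \leq j-2$ for $j < \dim \tilde Y$ on ordinary cohomology sheaves of the dualizing complex; since $Y^\circ \to \tilde Y$ is a smooth $\GG_m$-bundle of relative dimension one, the analogous bound for $\omega_{Y^\circ}^\bullet$ follows, and hence the same bound for the perverse cohomology sheaves ${^p}\calH^{-i}\omega_Y$ restricted to $Y^\circ$. The vertex $\{\frakm\}$ is a single extra point and does not change the support dimension, so the bound extends to $h_!({^p}\calH^{-i}\omega_Y)$ on all of $X$.

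The chief obstacle is the comparison between ordinary and perverse cohomology sheaves of the dualizing complex, since $(S_2)$ is most naturally phrased in the ordinary $t$-structure while ${^p}\calH^{-i}\omega_Y$ lives in the perverse one. I expect one of two routes to succeed: a direct spectral-sequence comparison, exploiting that the perverse truncation of a complex with bounded ordinary cohomology is controlled by its ordinary cohomology sheaves; or a purely commutative-algebraic argument via Remark~\ref{Serre condition under faithfully flat}, in which at each codimension-one prime $\frakp$ of $\Supp H^{n-i}_I(R_n)$ one applies Grothendieck local duality on the completed strict Henselisation of $(R_n/I)_\frakp$ to deduce $(H^{n-i}_I(R_n))_\frakp = 0$ from the depth bound $\depth(R_n/I)_\frakp \geq 2$. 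Either approach yields the upgraded support bound and hence the desired vanishing.
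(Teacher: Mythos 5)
Your reduction of the problem is the right one: the Artinian/injective statement for $H^{n-1}_I(R_n)$ follows from equi-dimensionality alone (Remark \ref{rmk-L-argument}(2)--(3), which is how the paper argues via Hartshorne--Lichtenbaum and Lyubeznik), and the whole content of the lemma is the upgrade of the support bound $\dim\Supp H^{n-i}_I(R_n)\le i-1$ to $\le i-2$ for $1<i<d$, after which Grothendieck vanishing kills $H^{i-1}_\frakm$. But neither of your two proposed mechanisms for that upgrade works. Route (b) invokes the wrong tool: Grothendieck local duality on the completed strict Henselization of $(R_n/I)_\frakp$ controls $H^j$ of that ring with support in its maximal ideal (equivalently, its depth), whereas what must vanish is $H^{n-i}_{I_\frakp}(R_\frakp)$, the \emph{second-from-top} local cohomology of the ambient regular local ring $R_\frakp$ with support in $I_\frakp$. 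No duality statement converts $\depth\ge 2$ into that vanishing; the assertion ``$\depth\ge 2$ implies $H^{\dim R_\frakp-1}_{I}(R)_\frakp=0$'' is essentially a case of the Second Vanishing Theorem, a deep result, not a consequence of local duality. Route (a) is also unsound at its key joint: $(S_2)$ is a coherent (depth) condition, characterized via the \emph{Grothendieck} dualizing complex, while ${^p}\calH^{-i}\omega_Y$ is perverse cohomology of the \emph{topological} dualizing complex, which is what corresponds to $H^{n-i}_I(R_n)$ under Riemann--Hilbert. There is no formal spectral-sequence comparison bridging these; a claim that $(S_2)$ bounds the supports of the cohomology sheaves of the topological $\omega_{\tilde Y}$ is precisely the kind of statement whose only known proofs pass through the hard vanishing theorems you are trying to avoid.

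The paper's actual argument supplies the missing idea. For a non-maximal prime $\frakp\supseteq I$ with $\dim((R_n/I)_\frakp)\ge 2$ (which holds when $\dim R_n/\frakp=i-1$ and $i<d$, by equi-dimensionality), Remark \ref{Serre condition under faithfully flat} and the $(S_2)$ hypothesis give $\depth\ge 2$ for the completion of the strict Henselization of $((R_n/I)_\frakp)^{\widehat{\ }}$; Hartshorne's connectedness theorem then shows its punctured spectrum is connected; and the \emph{local Second Vanishing Theorem} recalled in Remark \ref{rmk-L-argument}(5) (Ogus, Peskine--Szpiro, Huneke--Lyubeznik) — applied over that complete regular local ring with separably closed residue field, and pulled back by faithful flatness — yields $H^{c(\frakp)}_I(R_n)_\frakp=H^{c(\frakp)-1}_I(R_n)_\frakp=0$, where $c(\frakp)=\codim(\frakp)$. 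This is exactly the statement you need at codimension-one points of the support, and it is why the strict Henselization appears at all (the SVT requires a separably closed residue field, and connectedness rather than depth is its true hypothesis). With it, $\dim\Supp H^{n-i}_I(R_n)\le i-2$ and the lemma follows as you intended; without it, your proposal has a genuine gap at its central step.
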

\begin{proof}

  By Remark \ref{Serre condition under faithfully flat}, for
  each non-maximal prime ideal $\frakp$ of $R_n$ with $\dim((R_n/I)_\frakp)\geq 2$,
  we have $\depth\left(
     \left(
     ((R_n/I)_\frakp)^{\widehat{~}}
     \right)^{sh}
     \right)^{\widehat{~}}\geq  2$.
     Hence the punctured
  spectrum of this ring is connected by \cite{Hartshorne-CI+C}. The Second
  Vanishing Theorem implies that $H^{>\codim(P)-2}_I(R_n)_\frakp=0$ for each
  prime ideal $P$ such that $\dim((R_n/I)_\frakp)\geq 2$. Therefore the
  support dimension of $H^{i}_I(R_n)$ with $n-1>i>n-d$ is at most equal to
  $n-i-2$ and so $H^{i-1}_\frakm H^{n-i}_I(R)=0$ by Grothendieck's
  vanishing theorem. For $H^{n-1}_I(R_n)$, localization shows in
  conjunction with the Hartshorne--Lichtenbaum theorem that its
  support is at best at $P$. By Lyubzenik's work, it is hence Artinian
  and injective.
    \end{proof}

For the next three result we will use the following reduction.
\begin{lem}\label{lem-components}
  Let $\tilde Y$ be an equi-dimensional projective variety of
  dimension at least two. If the
  Lyubeznik numbers for the cones over all connected components of
  $\tilde Y$ are independent of the choice of the cone then the same
  is true for $\tilde Y$ itself.
\end{lem}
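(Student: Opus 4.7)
The plan is to induct on the number $s$ of connected components of $\tilde Y$, with the base case $s=1$ being exactly the hypothesis. For the inductive step, I would split $\tilde Y=\tilde Y_1\sqcup \tilde Y'$ into a single component $\tilde Y_1$ and the union $\tilde Y'$ of the remaining $s-1$ components. Writing $I_1, J\subseteq R_n$ for the homogeneous ideals defining the cones $Y_1, Y'$, so that $I=I_1\cap J$ defines $Y$, the crucial geometric input is that $\tilde Y_1$ and $\tilde Y'$ are disjoint in $\PP^{n-1}$; hence the cones $Y_1$ and $Y'$ meet only at the origin of $\AA^n$, and so $\sqrt{I_1+J}=\frakm$. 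Therefore $H^j_{I_1+J}(R_n)=H^j_\frakm(R_n)$ for every $j$.

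Next I would feed this into the Mayer--Vietoris long exact sequence
\[
\cdots\to H^{j-1}_I(R_n)\to H^j_\frakm(R_n)\to H^j_{I_1}(R_n)\oplus H^j_J(R_n)\to H^j_I(R_n)\to H^{j+1}_\frakm(R_n)\to\cdots.
\]
Because $H^j_\frakm(R_n)=0$ for $j\neq n$, this gives isomorphisms $H^j_I\cong H^j_{I_1}\oplus H^j_J$ for all $j\le n-2$, so that $\lambda_{p,q}(Y)=\lambda_{p,q}(Y_1)+\lambda_{p,q}(Y')$ for every $q\ge 2$. The only interesting range is $j\in\{n-1,n\}$. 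Using $\dim\tilde Y\ge 2$, which gives $\dim Y,\dim Y_1,\dim Y'\ge 3$, Hartshorne--Lichtenbaum kills $H^n_I$, $H^n_{I_1}$, and $H^n_J$; and since $\tilde Y_1$ is connected with $\dim\tilde Y_1\ge 1$, the Second Vanishing Theorem (Remark \ref{rmk-L-argument}(5)) also kills $H^{n-1}_{I_1}$. The sequence then collapses to the short exact sequence
\[
0\to H^{n-1}_J(R_n)\to H^{n-1}_I(R_n)\to H^n_\frakm(R_n)\to 0.
\]

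The main obstacle will be to show this sequence splits, and I would argue this from the fact that $H^n_\frakm(R_n)$ is the injective hull $E_{R_n}(R_n/\frakm)$, hence injective as an $R_n$-module by Matlis theory for the Noetherian ring $R_n$. Granting the splitting, applying $H^p_\frakm(-)$ and using that $H^p_\frakm(H^n_\frakm)=0$ for $p\ge 1$ (as $H^n_\frakm$ has zero-dimensional support) yields $\lambda_{p,1}(Y)=\lambda_{p,1}(Y')$ for $p\ge 1$ and $\lambda_{0,1}(Y)=\lambda_{0,1}(Y')+1$, the extra $+1$ recording the single copy of $H^n_\frakm$ surviving in $\Gamma_\frakm H^{n-1}_I$. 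Combined with $\lambda_{p,1}(Y_1)=0$ (since $H^{n-1}_{I_1}=0$) and $\lambda_{p,0}\equiv 0$, this expresses the entire Lyubeznik table $\Lambda(Y)$ through $\Lambda(Y_1)$, $\Lambda(Y')$, and a single fixed $+1$ shift in position $(0,1)$.

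To conclude, $\Lambda(Y_1)$ is a function only of $\tilde Y_1$ by the hypothesis of the lemma, while $\Lambda(Y')$ is a function only of $\tilde Y'$ by the inductive hypothesis applied to $\tilde Y'$, which is itself equi-dimensional of dimension $\ge 2$ and whose connected components are components of $\tilde Y$ to which the lemma's hypothesis applies. Since the decomposition of $\tilde Y$ into connected components is intrinsic, the resulting formula shows that $\Lambda(Y)$ depends only on $\tilde Y$, completing the induction.
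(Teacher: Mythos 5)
Your route is essentially the paper's: split $H^j_I(R_n)\cong H^j_{I_1}(R_n)\oplus H^j_J(R_n)$ for $j\le n-2$ via Mayer--Vietoris, then treat the degrees $n-1$ and $n$ separately using Hartshorne--Lichtenbaum, and feed the component data back in. The one step that fails as written is the splitting of
\[
0\to H^{n-1}_J(R_n)\to H^{n-1}_I(R_n)\to H^n_\frakm(R_n)\to 0 .
\]
Injectivity of the \emph{quotient} term never forces a short exact sequence to split; a sequence splits when the submodule is injective (or the quotient projective). Over $\ZZ$, the sequence $0\to\ZZ\to\QQ\to\QQ/\ZZ\to 0$ has injective quotient and does not split. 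So ``granting the splitting'' on the grounds that $H^n_\frakm(R_n)$ is an injective hull is a genuine error, and it is exactly the point where the $+1$ in position $(0,1)$ has to be produced.

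Fortunately the splitting is true, and you do not even need it, using facts you have not yet exploited at that point: since $\tilde Y$ is equi-dimensional of dimension $\ge 2$, the cone $Y'$ is equi-dimensional of dimension $\ge 3$, so by Remark \ref{rmk-L-argument} the module $H^{n-1}_J(R_n)$ has support only at the vertex, and then Lyubeznik's structure result (injective dimension bounded by support dimension) makes it injective, in fact a finite direct sum of copies of $H^n_\frakm(R_n)$; now the \emph{sub}module is injective and the sequence splits. Alternatively, skip the splitting entirely: apply $\Gamma_\frakm$ to the short exact sequence and use $H^{\ge 1}_\frakm(H^{n-1}_J(R_n))=0$ (zero-dimensional support) to read off $\lambda_{p,1}(Y)=\lambda_{p,1}(Y')$ for $p\ge 1$ and $\lambda_{0,1}(Y)=\lambda_{0,1}(Y')+1$ directly from the resulting long exact sequence. (Your appeal to the Second Vanishing Theorem for the connected component is fine after the reduction to $\CC$ permitted by Remark \ref{rmk-L-argument}.) For comparison, the paper's proof avoids both the Second Vanishing Theorem and this sequence: it notes that the column corresponding to $H^n_I$ vanishes by Hartshorne--Lichtenbaum and that $\lambda_{1,1}=0$ by equi-dimensionality, and then recovers $\lambda_{0,1}$ from convergence of the Grothendieck spectral sequence to $H^n_\frakm(R_n)$, which forces the alternating sum of all $\lambda_{p,q}$ to equal $1$; either repair makes your argument complete.
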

\begin{proof}
  Let $Y',Y''$ be two cones for $\tilde Y$ and let $\tilde Y=\tilde
  Y_1\sqcup \tilde Y_2$ be a disconnection. The resulting cones
  $Y'_1,Y''_1$ and $Y'_2,Y''_2$ satisfy: $Y'_1\cap Y'_2$ and
  $Y''_1\cap Y''_2$ both equal the origin. Let $I',I''$ be the
  defining ideals for $Y',Y''$ and denote the defining ideals of
  $Y'_1,Y'_2,Y''_1,Y''_2$ by $I'_1,I'_2\subseteq R_{n'}$ and
  $I''_1,I''_2\subseteq R_{n''}$
  respectively. All these ideals have dimension three or more.

  Then $H^q_{I'}(R_{n'})=H^q_{I'_1}(R_{n'})\oplus H^q_{I'_2}(R_{n'})$
  and $H^q_{I''}(R_{n''})=H^q_{I''_1}(R_{n''})\oplus
  H^q_{I''_2}(R_{n''})$ for all $q<n-1$ as follows from the
  Mayer--Vietoris sequence.

  It follows that, apart from $q=n,n-1$, the Lyubeznik numbers satisfy
  $\lambda_{p,q}(Y')= \lambda_{p,q}(Y'_1)+ \lambda_{p,q}(Y'_2)$ and
  $\lambda_{p,q}(Y'')= \lambda_{p,q}(Y''_1)+ \lambda_{p,q}(Y''_2)$. By
  the presumed embedding independence of $\Lambda(Y_1)$ and
  $\Lambda(Y_2)$, the same follows for $\Lambda(Y)$, except for
  columns $n,n-1$.

  In column $n$ all entries in all cases are zero by the
  Hartshorne--Lichtenbaum theorem. So is the diagonal entry
  $\lambda_{1,1}$ for all three ideals by equi-dimensionality. Thus,
  $\lambda_{0,1}(Y_i)=\lambda_{0,1}(Y'_i)+\lambda_{0,1}(Y''_i)+1$ for
  $i=1,2$ as follows from the Grothendieck spectral sequence (which
  implies that the alternating sum of all $\lambda_{p.q}$ is
  1). Therefore, all Lyubeznik numbers of $\tilde Y$ are embedding
  independent.
\end{proof}

\begin{thm}\label{thm-S2-P1}
  Let $\tilde Y$ be an equi-dimensional projective scheme of 
  dimension two, which
  \begin{enumerate}
  \item either satisfies locally everywhere Serre's condition $S_2$,
  \item or has Picard number one.
  \end{enumerate}
  Then the Lyubeznik numbers of all affine cones $Y$ over $\tilde Y$
  agree.
\end{thm}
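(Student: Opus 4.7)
The plan is first to show that, after reducing to the connected case, the Lyubeznik table of a two-dimensional $\tilde Y$ collapses to only four potentially nonzero entries; then to use the Grothendieck spectral sequence to reduce these to two; and finally to establish the invariance of each of these two entries separately in each of the two cases.

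By Lemma \ref{lem-components}, I may assume $\tilde Y$ is connected. (In case (2) this is automatic, since a disjoint union of positive-dimensional projective varieties has Picard rank at least two; in case (1), $(S_2)$ is a local condition and is therefore inherited by each connected component.) With $\tilde Y$ connected and equi-dimensional of dimension two, the punctured cone $Y \setminus \{P\}$ is a $\CC^*$-bundle over $\tilde Y$ and hence connected, so the Second Vanishing Theorem (Remark \ref{rmk-L-argument}(5)) gives $H^n_I(R_n) = H^{n-1}_I(R_n) = 0$; combined with equi-dimensionality (Remark \ref{rmk-L-argument}(2)), which forces $\dim \Supp H^{n-2}_I(R_n) < 2$, this leaves only $\lambda_{0,2}, \lambda_{1,2}, \lambda_{2,3}, \lambda_{3,3}$ possibly nonzero. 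In the Grothendieck spectral sequence \eqref{eqn-lc-ss} only rows $q = n-3, n-2$ contribute, and $H^{n-4}_I(R_n)=0$ annihilates every $d_r$ with $r\ge 3$. Thus the only possibly nonzero differentials are
\[
d_2\colon E_2^{0,n-2} \to E_2^{2,n-3}, \qquad d_2\colon E_2^{1,n-2} \to E_2^{3,n-3}.
\]
Each entry on every page is a finite sum of copies of $E := H^n_\frakm(R_n)$, whose multiplicity equals the socle dimension and is additive across kernels and cokernels; using that the abutment $H^{p+q}_\frakm(R_n)$ vanishes off the antidiagonal $p+q = n$ and has socle dimension one on it, I obtain
\[
\lambda_{0,2} = \lambda_{2,3} \qquad\text{and}\qquad \lambda_{1,2} = \lambda_{3,3} - 1.
\]
So the entire table is determined by $\lambda_{2,3}$ and $\lambda_{3,3}$.

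Now $\lambda_{3,3}$ is embedding independent by Remark \ref{highest lambda}(2). For $\lambda_{2,3}$, case (2) is immediate from Lemma \ref{lem-consOfPic}. In case (1), I use that the $(S_2)$ assumption, via the dimension count in the proof of Lemma \ref{lem-S2ramification}, forces $H^{n-2}_I(R_n)$ to be Artinian and supported at $\frakm$, so $\lambda_{0,2}$ is just its length; the Matlis duality plus Ogus identification reviewed in the proof of Corollary \ref{indep for Ogus' q invariant} (which requires only Artinianness, not Picard rank one) then gives $\dim_\CC H^2_P(Y) = \lambda_{0,2}$. Substituting this into Hartshorne's exact sequence from Theorem \ref{thm: main},
\[
0 \to H^1_\dR(\tilde Y) \to H^2_P(Y) \to H^0_\dR(\tilde Y) \xrightarrow{\cup \xi} H^2_\dR(\tilde Y),
\]
and noting that for connected $\tilde Y$ the source $H^0_\dR(\tilde Y) = \CC$ maps injectively into $H^2_\dR(\tilde Y)$ because the Chern class $\xi$ of any very ample line bundle is nonzero, one obtains $H^2_P(Y) \cong H^1_\dR(\tilde Y)$. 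This quantity depends only on $\tilde Y$, so $\lambda_{0,2}$, and hence $\lambda_{2,3}$, is embedding invariant, completing case (1).

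The main obstacle is case (1): without Picard rank one the higher Chern-class maps $H^i_\dR(\tilde Y) \to H^{i+2}_\dR(\tilde Y)$ in Hartshorne's long exact sequence can a priori have embedding-dependent rank, and the argument breaks in higher dimension. The saving feature in dimension two is that only the injectivity of the single map $H^0_\dR \to H^2_\dR$ is needed, and this follows automatically from the positivity of Chern classes of ample line bundles.
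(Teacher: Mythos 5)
Your proof is correct, and it follows the paper's overall skeleton (reduce to connected $\tilde Y$, use the Second Vanishing Theorem and equi-dimensionality to shrink the table, extract $\lambda_{0,2}=\lambda_{2,3}$ and $\lambda_{3,3}=\lambda_{1,2}+1$ from convergence, handle $p\geq 2$ in the Picard case via Lemma \ref{lem-consOfPic} and $\lambda_{3,3}$ via \cite{Zhang-Compositio}), but it diverges genuinely in how case (1) is finished. The paper proves $\lambda_{1,2}=0$ directly from Lemma \ref{lem-S2ramification} and then identifies $\lambda_{0,2}$ topologically through Corollary \ref{cor-topLocalDeRham}, i.e.\ via the top cohomology of the affine complement and the Leray sequence of the $\CC^*$-bundle in Lemma \ref{lem-topLocalDeRham}; you instead keep the spectral-sequence bookkeeping uniform across both cases (deducing $\lambda_{1,2}=\lambda_{3,3}-1$ rather than $\lambda_{1,2}=0$) and compute $\lambda_{0,2}$ through the Matlis--Ogus identification $\lambda_{0,2}=\dim_\KK H^2_P(Y)$ together with Hartshorne's exact sequence, observing that only injectivity of $\cup\,\xi\colon H^0_\dR(\tilde Y)\to H^2_\dR(\tilde Y)$ is needed and that this holds for \emph{any} ample class, so Picard rank one is not required -- a nice economy, since it reuses the Theorem \ref{thm: main} machinery where the paper builds a separate topological argument; conversely the paper's route yields the sharper statement $\lambda_{1,2}=0$ and a topological reading of $\lambda_{0,2}$ under $(S_2)$. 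Your reduction to the connected case is also handled slightly differently (and arguably more carefully): you note that hypothesis (2) forces connectedness outright, while $(S_2)$ passes to components, so Lemma \ref{lem-components} is only invoked where its hypothesis is verifiable. Two small points to polish: $\lambda_{0,2}$ is the socle dimension, i.e.\ the number of copies of $H^n_\frakm(R_n)$ in the Artinian module $H^{n-2}_I(R_n)$, not its length (which is infinite); and the nonvanishing of $\xi$ in $H^2_\dR(\tilde Y)$ for possibly singular $\tilde Y$ should be justified by reducing to $\KK=\CC$ (Remark \ref{rmk-L-argument}(1)) and the degree argument recorded in the footnote to Lemma \ref{lem-topLocalDeRham}.
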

\begin{proof}

  Let $Y$ be any cone over $\tilde Y$. It is a scheme of pure
  dimension 3, and thus by Remark \ref{rmk-L-argument}, the Lyubeznik
  table of $Y$ is
  \[
  \begin{pmatrix}
    \cdot&\lambda_{0,1}&\lambda_{0,2}&\cdot\\
    \cdot&\cdot&\lambda_{1,2}&\cdot\\
    \cdot&\cdot&\cdot&\lambda_{2,3}\\
    \cdot&\cdot&\cdot&\lambda_{3,3}\end{pmatrix}.
  \]

   By Lemma \ref{lem-components}, we can assume that $\tilde Y$ is
   connected. That assures that $\lambda_{0,1}$ is zero by the Second
   Vanishing Theorem \cite[Theorem 7.5]{Hartshorne-CDAV}. 

  If $\tilde Y$ is $(S_2)$ locally everywhere then by Lemma
  \ref{lem-S2ramification}, $H^{n-2}_I(R_n)$ has support dimension zero
  and is the top local cohomology module, and so
  $\lambda_{1,2}=0$. It follows from \cite{L-Dmods} that
  $H^{n-2}_I(R_n)$ is injective.  By Corollary \ref{cor-topLocalDeRham},
  the socle dimension $\lambda_{0,2}$ of this module is determined by
  the topology of $\tilde Y$.
  Finally, the convergence of the spectral sequence to $H^n_\frakm(R_n)$
  implies that $\lambda_{2,3}=\lambda_{0,2}$.

  Suppose now that $\tilde Y$ has Picard number one. Then by Lemma
  \ref{lem-consOfPic}, the $\lambda_{i,j}$ with $i>1$ are a function of
  $\tilde Y$ alone. The only possibly nonzero differentials are:
  \begin{itemize}
  \item on page two the morphism $E^{0,n-2}_2\to E^{2,n-3}_2$ and
    $E^{1,n-2}_2\to E^{3,n-3}_2$;
  \item on page three the morphism $E^{0,n-1}_3\to E^{3,n-3}_3$.
   \end{itemize}

  Convergence of the spectral sequence forces $E^{0,n-2}_2\to
  E^{2,n-3}_2$ to be an isomorphism\footnote{This isomorphism property
    holds for any ideal $I$ of dimension greater than two.}, and the
  maps $E^{1,n-2}_2\to E^{3,n-3}_2$ and $E^{0,n-1}_3\to
  E^{3,n-3}_3$ to be injective. Moreover, the cokernel of
  $E^{0,n-1}_3\to E^{3,n-3}_3$ must be one copy of
  $H^n_\frakm(R_n)$.

  Since all modules in $E^{p,q}_{\geq 2}$ are injective, socle
  dimensions are additive in short exact sequences. Thus,
  $\lambda_{0,2}=\lambda_{2,3}$, and
  $\lambda_{3,3}=\lambda_{1,2}+\lambda_{0,1}+1=\lambda_{1,2}+1$. This settles the
  claim for $\lambda_{0,2}$. 
 But 
  $\lambda_{3,3}$ is a function of $\tilde Y$  by
  \cite{Zhang-Compositio}, 
  and it follows that $\lambda_{1,2}$ is a function of $\tilde Y$ as well.
\end{proof}

\begin{thm} \label{thm-3fold}
Let $\tilde Y$ be a projective complex scheme that is of
equi-dimension three. Assume that every local ring $\calO_{\tilde Y
  ,\tilde y}$ satisfies $(S_2)$ and that the Picard group of $\tilde
Y$ has rank 1. Then $\Lambda(Y)$ is independent of the choice of the
cone $Y$ for $\tilde Y$.
\end{thm}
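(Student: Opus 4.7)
The plan is to whittle the Lyubeznik table $\Lambda(Y)$ down to two remaining unknowns by invoking the structural vanishing results already established earlier, and then to determine those survivors by tracking ranks in the Grothendieck spectral sequence
\[
E_2^{p,q}=H^p_\frakm(H^q_I(R_n))\Longrightarrow H^{p+q}_\frakm(R_n).
\]
Every $E_2^{p,q}$ here is, by Remark~\ref{rmk-L-argument}(iii), a finite direct sum of copies of $E=H^n_\frakm(R_n)$, so the socle dimension of any subquotient equals its length and is additive in short exact sequences; the ranks of the differentials can therefore be manipulated as ordinary integers that must balance against the abutment, whose total socle dimension is $1$ (concentrated at $p+q=n$).

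I would first use Lemma~\ref{lem-components} to reduce to the case that $\tilde Y$ is connected. The Second Vanishing Theorem (Remark~\ref{rmk-L-argument}(v)) then forces $H^{n}_I(R_n)=H^{n-1}_I(R_n)=0$, so columns $q=0,1$ of $\Lambda(Y)$ vanish; Remark~\ref{two zero terms} gives $\lambda_{0,4}=\lambda_{1,4}=0$; and the $(S_2)$ hypothesis combined with Lemma~\ref{lem-S2ramification} yields $\lambda_{1,2}=\lambda_{2,3}=0$ together with $H^{n-2}_I(R_n)$ being Artinian, so $f_Y\le n-3$. Lemma~\ref{lem-consOfPic} (applicable under the Picard rank one hypothesis) then makes $\lambda_{p,q}$ a function of $\tilde Y$ alone for every $p\ge 2$, while Corollary~\ref{indep for Ogus' q invariant} handles $\lambda_{0,2}$. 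The only remaining unknowns in $\Lambda(Y)$ are thus $\lambda_{0,3}$ and $\lambda_{1,3}$.

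For the spectral sequence step I would write $q'=n-q$ for the column index. After the vanishings above, the only nonzero entries of $E_2^{p,q'}$ lie in rows $p\in\{0,\ldots,4\}$ and columns $q'\in\{n-4,n-3,n-2\}$, and a direct inspection of positions shows that the only differentials on pages $r\ge 2$ with both a nonzero source and a nonzero target are
\[
d_2^{(1)}\colon E_2^{0,n-3}\to E_2^{2,n-4},\qquad
d_2^{(2)}\colon E_2^{1,n-3}\to E_2^{3,n-4},\qquad
d_3\colon E_3^{0,n-2}\to E_3^{3,n-4},
\]
of some ranks $r_1$, $r_2$, $r_3$; every $d_r$ with $r\ge 4$ vanishes for positional reasons. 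Imposing $E_\infty^{p,q'}=0$ at the positions $(0,n-3)$, $(2,n-4)$, $(1,n-3)$, $(0,n-2)$, $(3,n-4)$ successively yields
\[
r_1=\lambda_{0,3},\quad r_1=\lambda_{2,4},\quad r_2=\lambda_{1,3},\quad r_3=\lambda_{0,2},\quad \lambda_{3,4}=r_2+r_3,
\]
so that $\lambda_{0,3}=\lambda_{2,4}$ and $\lambda_{1,3}=\lambda_{3,4}-\lambda_{0,2}$. Both right-hand sides are embedding-independent by the reductions already made, which finishes the argument.

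The hard part will be the combinatorial bookkeeping of differentials: one must verify with care that no other $d_2$ or $d_3$ contributes (for instance $d_2\colon E_2^{0,n-2}\to E_2^{2,n-3}$ is zero precisely because its target $\lambda_{2,3}=0$ by the $(S_2)$ reduction, and $d_2\colon E_2^{2,n-2}\to E_2^{4,n-3}$ is zero because its target sits below the diagonal), and that the restricted column range $q'\in\{n-4,n-3,n-2\}$ together with the triangular pattern $\lambda_{p,q}=0$ for $p>q$ suppresses every $d_{\ge 4}$. Once this case analysis is in hand, the two desired identities $\lambda_{0,3}=\lambda_{2,4}$ and $\lambda_{1,3}=\lambda_{3,4}-\lambda_{0,2}$ fall out, and $\Lambda(Y)$ is determined by $\tilde Y$ alone.
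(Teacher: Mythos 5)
Your argument is correct and follows essentially the same route as the paper: reduce to connected $\tilde Y$ via Lemma~\ref{lem-components}, use the Second Vanishing Theorem, Lemma~\ref{lem-S2ramification} and the equi-dimensionality/triangularity vanishings to trim the table, handle all $\lambda_{p,q}$ with $p\geq 2$ by Lemma~\ref{lem-consOfPic}, and extract $\lambda_{0,3}=\lambda_{2,4}$ and $\lambda_{1,3}=\lambda_{3,4}-\lambda_{0,2}$ from convergence of the Grothendieck spectral sequence, exactly as in the paper. The only (harmless) deviation is that you get the embedding-independence of $\lambda_{0,2}$ from Corollary~\ref{indep for Ogus' q invariant}, which uses the Picard hypothesis together with $f_Y\le n-3$ (the needed fact that $H^{n-2}_I(R_n)$ has zero-dimensional support is contained in the proof of Lemma~\ref{lem-S2ramification} rather than its statement, and is also asserted in the paper), whereas the paper deduces it from Corollary~\ref{cor-topLocalDeRham} and Proposition~\ref{prop-complement-veronese} without invoking the Picard condition.
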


\begin{proof}
 By Lemma \ref{lem-components} we can assume that $\tilde Y$ is
 connected. This forces $\lambda_{0,1}(Y)=0$ for any cone $Y$ of
 $\tilde Y$ by the Second Vanishing Theorem \cite[Theorem 7.5]{Hartshorne-CDAV}. 
  
  Using the equi-dimensionality and the $(S_2)$-property, the Lyubeznik
  table is by Remark \ref{rmk-L-argument}  and Lemma \ref{lem-S2ramification}
 equal to 
  \[
\Lambda=  \begin{pmatrix}
    \cdot&\cdot&\lambda_{0,2}&\lambda_{0,3}&\cdot\\
    \cdot&\cdot&\cdot&\lambda_{1,3}&\cdot\\
    \cdot&\cdot&\cdot&\cdot&\lambda_{2,4}\\
    \cdot&\cdot&\cdot&\cdot&\lambda_{3,4}\\
    \cdot&\cdot&\cdot&\cdot&\lambda_{4,4}\end{pmatrix},
\]
  Moreover, $H^{n-2}_I(R_n)$ is supported only in the origin, hence
  injective. By Corollary \ref{cor-topLocalDeRham}, its socle
  dimension is the dimension of the top de Rham group of the affine cone
  complement.
  By Proposition \ref{prop-complement-veronese}, this dimension is
  well-defined. Thus, 
  $\lambda_{0,2}$ is a function of
  $\tilde Y$ alone, reflecting the de Rham group $H^{2n-2}(\AA^n_\KK\minus
  Y)$ independent of the choice of the cone.

  Convergence of the spectral sequence forces, similarly to the proof
  of Theorem \ref{thm-S2-P1}, that 
  $\lambda_{3,4}=\lambda_{1,3}+\lambda_{0,2}$ and that
  $\lambda_{2,4}=\lambda_{0,3}$. By the Picard number condition,
  $\lambda_{2,4}$ is the same for every cone, and hence so is
  $\lambda_{0,3}$. Since
  $\lambda_{0,2}$ is a function of $\tilde Y$, 
  and since $\lambda_{\geq 2,*}$ is independent of the embedding by the
  Picard number condition, the same is true for $\lambda_{1,3}$.
\end{proof}

\begin{thm}
\label{4-fold}
Let $\tilde Y$ be a projective complex scheme of equi-dimension
four. Assume that $\tilde Y$ is locally everywhere $(S_3)$, and that
the Picard group of $\tilde Y$ has rank 1. Then $\Lambda(Y)$ is
independent of the choice of the cone $Y$ for $\tilde Y$.
\end{thm}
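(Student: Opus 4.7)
The plan is to mirror the proof of Theorem \ref{thm-3fold}, systematically replacing $(S_2)$ by $(S_3)$. First, by Lemma \ref{lem-components} I may assume that $\tilde Y$ is connected; the Second Vanishing Theorem of Remark \ref{rmk-L-argument}(5) then forces $\lambda_{0,1}(Y) = 0$ together with $H^{n-1}_I(R_n) = H^n_I(R_n) = 0$.

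The second step is to establish an $(S_3)$-analog of Lemma \ref{lem-S2ramification}. By Remark \ref{Serre condition under faithfully flat}, the faithfully flat tower $\bigl(\bigl(((R_n/I)_\frakp)^{\widehat{~}}\bigr)^{sh}\bigr)^{\widehat{~}}$ has depth at least $3$ for every non-maximal prime $\frakp$ with $\dim((R_n/I)_\frakp) \geq 3$. A higher-connectedness strengthening of Hartshorne's Second Vanishing Theorem (in the style of Huneke--Lyubeznik or Peskine--Szpiro) should then produce the sharpened support bounds
\[
H^{n-2}_I(R_n) = 0, \qquad \Supp(H^{n-3}_I(R_n)) \subseteq \{\frakm\}, \qquad \dim \Supp(H^{n-4}_I(R_n)) \leq 1.
\]
Combined with equi-dimensionality (Remark \ref{rmk-L-argument}(2)) and the vanishing $\lambda_{0,d} = \lambda_{1,d} = 0$ from Remark \ref{two zero terms}, this collapses the Lyubeznik table to
\[
\Lambda(Y) = \begin{pmatrix}
\cdot & \cdot & \cdot & \lambda_{0,3} & \lambda_{0,4} & \cdot \\
\cdot & \cdot & \cdot & \cdot & \lambda_{1,4} & \cdot \\
\cdot & \cdot & \cdot & \cdot & \cdot & \lambda_{2,5} \\
\cdot & \cdot & \cdot & \cdot & \cdot & \lambda_{3,5} \\
\cdot & \cdot & \cdot & \cdot & \cdot & \lambda_{4,5} \\
\cdot & \cdot & \cdot & \cdot & \cdot & \lambda_{5,5}
\end{pmatrix}.
\]

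With the shape in hand, convergence of the Grothendieck spectral sequence to $H^n_\frakm(R_n)$ leaves only three possibly non-trivial differentials: the $d_2$-arrows $\lambda_{0,4} \to \lambda_{2,5}$ and $\lambda_{1,4} \to \lambda_{3,5}$, together with the $d_3$-arrow $\lambda_{0,3} \to \lambda_{3,5}$. Counting socles (all higher-page entries are injective direct sums of $H^n_\frakm(R_n)$) forces
\[
\lambda_{2,5} = \lambda_{0,4}, \qquad \lambda_{3,5} = \lambda_{0,3} + \lambda_{1,4}, \qquad \lambda_{4,5} = 0, \qquad \lambda_{5,5} = 1.
\]
By Lemma \ref{lem-consOfPic}, the Picard rank one hypothesis makes each $\lambda_{p,q}$ with $p \geq 2$ a function of $\tilde Y$ alone; in particular $\lambda_{2,5}$ and $\lambda_{3,5}$ are intrinsic, so $\lambda_{0,4}$ and $\lambda_{0,3} + \lambda_{1,4}$ depend only on $\tilde Y$. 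Since $H^{n-3}_I(R_n)$ is Artinian and is the largest non-vanishing local cohomology module, Corollary \ref{cor-topLocalDeRham} together with Lemma \ref{lem-topLocalDeRham} identifies $\lambda_{0,3}$ with the dimension of the top de Rham group of the affine complement, a topological invariant of $\tilde Y$. Consequently $\lambda_{1,4} = \lambda_{3,5} - \lambda_{0,3}$ is also a function of $\tilde Y$, and the proof is complete.

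The main obstacle is the second step: the promotion of Lemma \ref{lem-S2ramification} from $(S_2)$ to $(S_3)$. Hartshorne's classical Second Vanishing encodes only $0$-connectedness of the punctured spectrum, whereas the target bound $H^{n-2}_I(R_n) = 0$ requires a ``third vanishing theorem'' linking depth $\geq 3$ to a higher-connectedness property of the punctured spectrum of the completed strict Henselization, and thence to a cohomological-dimension vanishing. If a sufficiently general off-the-shelf statement is unavailable, one may need to supplement the argument with a direct $\calD$-module or mixed-Hodge-module computation in the spirit of \cite{RSW} to certify the required support shrinkage. Everything downstream of this sharpening is bookkeeping: the shape of the table dictates the spectral-sequence relations, and Lemma \ref{lem-consOfPic} together with Corollary \ref{cor-topLocalDeRham} handles the remaining entries.
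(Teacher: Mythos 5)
Your overall bookkeeping strategy (reduce to connected $\tilde Y$, pin the shape of the table, then use convergence plus Lemma \ref{lem-consOfPic} for $p\geq 2$) is the right one, but the proposal has a genuine gap at exactly the point you flag: there is no ``third vanishing theorem'' giving $H^{n-2}_I(R_n)=0$ from the local $(S_3)$ hypothesis, and in fact that vanishing is \emph{false} under the stated hypotheses. The $(S_3)$ condition is only assumed on $\tilde Y$, i.e.\ away from the vertex of the cone, so the most one can extract is information at non-maximal primes. This is what the paper does: applying Dao--Takagi (their Corollary 2.8, a depth-versus-cohomological-dimension result, not a connectedness statement) at primes of height $n-1$ and $n-2$ containing $I$ yields only that $H^{n-2}_I(R_n)$ and $H^{n-3}_I(R_n)$ are Artinian (supported at $\frakm$) and that $\dim\Supp H^{n-4}_I(R_n)\le 1$, so $\lambda_{1,3}=\lambda_{2,4}=0$ --- but $\lambda_{0,2}$ stays in the table. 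A concrete counterexample to your claimed vanishing: take $\tilde Y$ a very general abelian fourfold, which is smooth (hence locally $(S_3)$) and has Picard rank one; since $b_1(\tilde Y)\neq 0$, the argument of Lemma \ref{lem-topLocalDeRham} together with \cite[Thm.~3.1]{LSW} shows $H^{n-2}_I(R_n)$ is a nonzero Artinian module, so $\lambda_{0,2}\neq 0$ and, by convergence, $\lambda_{4,5}=\lambda_{0,2}\neq 0$, contradicting your conclusion $\lambda_{4,5}=0$.

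This error propagates: because $H^{n-3}_I(R_n)$ need not be the top non-vanishing local cohomology module, Corollary \ref{cor-topLocalDeRham} cannot be invoked to identify $\lambda_{0,3}$ topologically as you propose. The paper instead handles the $p=0$ entries via Corollary \ref{indep for Ogus' q invariant}: since $H^{\ell}_I(R_n)$ is Artinian for $\ell\geq n-3$, Ogus' duality between $D(H^{n-q}_I(R_n))$ and local de Rham cohomology of the vertex, combined with Theorem \ref{thm: main} (which is where the Picard rank one hypothesis enters for $p\leq 1$), shows that $\lambda_{0,2}$ and $\lambda_{0,3}$ are intrinsic to $\tilde Y$. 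The correct spectral sequence relations are then $\lambda_{2,5}=\lambda_{0,4}$, $\lambda_{3,5}=\lambda_{1,4}+\lambda_{0,3}$, and $\lambda_{4,5}=\lambda_{0,2}$, after which Lemma \ref{lem-consOfPic} fixes everything with $p\geq 2$ and hence also $\lambda_{0,4}$ and $\lambda_{1,4}$. So the repair is not a stronger vanishing theorem but the substitution of Dao--Takagi at non-maximal primes plus Ogus' local de Rham comparison for the leftover $p\in\{0,1\}$ entries.
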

\begin{proof}
   By Lemma \ref{lem-components} we can assume that $\tilde Y$ is
 connected. This forces $\lambda_{0,1}(Y)=0$ for any cone $Y$ of
 $\tilde Y$ by the Second Vanishing Theorem \cite[Theorem 7.5]{Hartshorne-CDAV}. 
  
  Write $\tilde Y=\Proj(R/I)$ where $R=\CC[x_1,\dots,x_n]$. Since
  $(S_3)$ implies $(S_2)$, Remark \ref{rmk-L-argument}
  and Lemma \ref{lem-S2ramification} assure that the Lyubeznik table of $R/I$ is
  \[
  \Lambda=\begin{pmatrix}
    \cdot&\cdot&\lambda_{0,2}&\lambda_{0,3}&\lambda_{0,4}&\cdot\\
    \cdot&\cdot&\cdot&\lambda_{1,3}&\lambda_{1,4}&\cdot\\
    \cdot&\cdot&\cdot&\cdot&\lambda_{2,4}&\lambda_{2,5}\\
    \cdot&\cdot&\cdot&\cdot&\cdot&\lambda_{3,5}\\
    \cdot&\cdot&\cdot&\cdot&\cdot&\lambda_{4,5}\\
    \cdot&\cdot&\cdot&\cdot&\cdot&\lambda_{5,5}\end{pmatrix}.
  \]
  Now take a prime $\frakp$ of height
  $n-2$ that contains $I$. Then
  $\depth((R_n/I)_\frakp)=3$ and so by
  \cite[Corollary 2.8]{Dao-Takagi},
  $(H^{(n-2)-3+1}_I(R_n))_\frakp=0$. Thus, $\dim(H^{n-4}_I(R_n))\le
  1$ and $\lambda_{2,4}=0$.

  Localizing at primes of height $n-1$ yields, with
  the result of Dao and Takagi \cite[Corollary 2.8]{Dao-Takagi}, that $H^{n-2}_I(R_n)$ and $H^{n-3}_I(R_n)$ are
  Artinian. It follows that
  $\lambda_{1,3}=0$, and $f_Y\le n-3$. 
  By Corollary \ref{indep for Ogus' q invariant}, since the Picard
  number is one, 
  $\lambda_{0,2}$ and $\lambda_{0,3}$ are independent of the embedding choice. 
  
  Convergence of the spectral sequence to $H^n_\frakm(R)$ forces that
  \[
 \lambda_{0,4}=\lambda_{2,5}\qquad {\rm and }\qquad
  \lambda_{1,4}=\lambda_{3,5}-\lambda_{0,3} \qquad {\rm (and }\qquad  \lambda_{0,2}=\lambda_{4,5}).
  \]
  As the Picard number is one, the $\lambda_{i,j}$ are independent of
  embeddings for all $i\geq 2$ and all $j$. This then fixes all $
  \lambda_{p,q}$.   
\end{proof}

\begin{proof}[{Proof of Theorem \ref{thm-toric-3fold}}]
  Toric projective varieties are connected and locally the spectra of
  semigroup rings to saturated semigroups. They are hence normal, and
  by Hochster's theorem Cohen--Macaulay, \cite{Hochster}. The
  coordinate ring $R_n/I$ of the cone $Y$ thus has a Lyubeznik table
  as in the proof of Theorem \ref{thm-3fold}. Moreover,
  $H^{n-2}_I(R_n)$ is Artinian.

  Additional vanishings are due to the $(S_3)$-condition. As in
  the proof of Theorem \ref{4-fold}, localization at a prime of $R_n$
  of height $n-1$ shows with \cite[Thm.~2.8]{Dao-Takagi} that the
  support of $H^{n-3}_I(R_n)$ is zero-dimensional, hence
  $\lambda_{1,3}=0$.

  At this point, let us assume that $\tilde Y$ is not a hypersurface,
  and hence of codimension two or more.

  If $\lambda_{0,2}$ is nonzero, it is therefore the dimension of
  $H^n_\dR(H^{n-2}_I(R_n))=H^{2n-3}(U;\CC)$ where $U$ is the affine
  complement of $Y$. By the spectral sequence argument in the proof of
  Lemma \ref{lem-topLocalDeRham}, it also equals the dimension of the top
  cohomology group $H^{2n-4}(\PP U;\CC)$ of the projective complement $\PP U$.

  The long exact sequence \eqref{eqn-les-Iversen}
  takes the form 
  \[
  H^{2n-4}_{\tilde Y}(\PP^{n-1};\CC)\to H^{2n-4}(\PP^{n-1};\CC)\to
  H^{2n-4}(\PP U;\CC)\to H^{2n-3}_{\tilde Y}(\PP^{n-1};\CC)\to \cdots
  \]
  and by \cite[V.6.6]{Iversen} $H^{2n-3}_{\tilde Y}(\PP^{n-1};\CC)$ is
  dual to $H^1_c(\tilde Y;\CC)=H^1(\tilde Y;\CC)$. But projective
  toric varieties (or more generally toric varieties to a fan with a
  full-dimensional cone) are simply connected by
  \cite[3.2]{Fulton}. So $H^1(\tilde Y;\CC)$ and $H^{2n-3}_{\tilde
    Y}(\PP^{n-1};\CC)$ are zero.

  The morphism $ H^{2n-4}_{\tilde Y}(\PP^{n-1};\CC)\to
  H^{2n-4}(\PP^{n-1};\CC)$ is via Alexander and Poincar\'e duality
  dual to the (injective) restriction morphism $H^2(\PP^{n-1};\CC)\to
  H^2(\tilde Y;\CC)$, hence itself surjective. It follows that
  $H^{2n-4}(\PP U;\CC)=H^{2n-3}(U;\CC)=0$, and hence
  $\lambda_{0,2}$ and $H^{n-2}_I(R_n)$ are both zero.

  It now follows that actually the Artinian module $H^{n-3}_I(R_n)$ is the top
  local cohomology group of $I$, and $H^{2n-5}(\PP U;\CC)$ is the top
  cohomology group of $\PP U$. Repeating the above computations, we
  now have a long exact sequence
   \[
  0= H^{2n-5}(\PP^{n-1};\CC)\to
  H^{2n-5}(\PP U;\CC)\to H^{2n-4}_{\tilde Y}(\PP^{n-1};\CC)\to
  H^{2n-4}(\PP^{n-1};\CC)\to \cdots
  \]
  in which the arrow $H^{2n-4}_{\tilde Y}(\PP^{n-1};\CC)\to
  H^{2n-4}(\PP^{n-1};\CC)$ is dual to the (injective) morphism
  $H^2(\PP^{n-1};\CC)\to H^2(\tilde Y;\CC)$, and where  $H^{2n-5}(\PP
  U;\CC)=H^{2n-4}(U;\CC)$ is a vector space of dimension
  $\lambda_{0,3}$.

  For projective toric varieties (and more generally, when all cones
  of the fan are top-dimensional), the Picard group of $\tilde Y$ is
  isomorphic to $H^2(\tilde Y;\CC)$,
  \cite[Thm.~12.3.2]{CoxLittleSchenck}. The long exact sequence above
  thus shows the equation $\lambda_{0,3}=\tilde p$.

  Finally, by convergence of the spectral sequence, $\lambda_{4,4}=1$ and
  $\lambda_{3,4}=\lambda_{0,2}=0$ and $\lambda_{2,4}=\lambda_{0,3}$.

  This settles the problem for all embeddings in which $\tilde Y$ is
  not a hypersurface. If in some embedding $\tilde Y$ happens to be a
  hypersurface, necessarily in $\PP^4_\CC$, its Lyubeznik table is
  trivial for this embedding, simply for lack of higher local
  cohomology. On the other hand, \cite[Exp.~XII, Cor 3.7]{SGA2}
  asserts that the Picard group of $\tilde Y$ is then cyclic, equal to
  that of $\PP^4_\CC$. Thus, $\tilde p$ is zero and we see that all
  Lyubeznik tables of $\tilde Y$ agree.
\end{proof}

\section*{Acknowledgements}
\thanks{We would like to thank Winfried Bruns, Boris
  Pasquier and Alex Dimca for helpful conversations.}

\bibliographystyle{amsalpha}
\bibliography{veronese.bib}
\end{document}